\numberwithin{figure}{section}
\numberwithin{figure}{section}
\newtheorem{theorem}{Theorem}[section]
\newtheorem{lemma}[theorem]{Lemma}
\newtheorem{proposition}[theorem]{Proposition}
\theoremstyle{definition}
\newtheorem{definition}[theorem]{Definition}
\newtheorem{remark}[theorem]{Remark}
\numberwithin{equation}{section}
\newcommand{\wto}{\rightharpoonup}
\newcommand{\de}{\delta}
\newcommand{\R}{\mathbb{R}}
\newcommand{\N}{\mathbb{N}}
\newcommand{\Ha}{\mathcal{H}}
\newcommand{\beq}{\begin{equation}}
\newcommand{\eeq}{\end{equation}}
\newcommand{\eps}{\varepsilon}
\newcommand{\e}{\varepsilon}
\newcommand{\vphi}{\varphi} 
\newcommand{\la}{\langle}
\newcommand{\ra}{\rangle}
\newcommand{\diver}{\operatorname{div}}
\newcommand{\Div}{\operatorname{div}}
\newcommand{\pa}{\partial}
\newcommand{\Om}{\Omega}
\newcommand{\medint}{-\kern -,375cm\int}
\newcommand{\medintinrigo}{-\kern -,315cm\int}
\newcommand{\C}{\mathbb{C}}
\begin{document}

\title[Surface elastic flow in 3D]{The surface diffusion flow with elasticity in three dimensions}

\author{Nicola Fusco}

\author{Vesa Julin}

\author{Massimiliano Morini}

\keywords{}

\begin{abstract} 
We establish  short-time existence of a smooth solution to the surface diffusion equation with an elastic term and without an additional curvature regularization  in three space dimensions. We also prove the asymptotic stability of strictly stable stationary sets. 
\end{abstract}

\maketitle

\tableofcontents

\section{Introduction}
Morphological evolution of  strained elastic solids, driven by stress and surface mass transport
occurs in many physical systems. One instance is the hetero-epitaxial growth of elastic films when a lattice mismatch between film and substrate is present. Another example is given by the phase separation in several small connected phases within a common elastic body, which takes place in certain alloys under specific
 temperature conditions.
 A third  situation is  represented by the nucleation and evolution of material voids inside a stressed elastic
   solid. From the mathematical point of view, such phenomena are related to a free energy functional, which is typically given
 by the sum of the stored elastic energy and the surface energy accounting for the surface tension along the  interface between the phases. In this context the 
 equilibria are identified with the local or global minimizers under a volume constraint of the aforementioned energy.
 
 All these variational problems
 can be regarded as non-local   {\em isoperimetric problems}, 
 where the non-locality is given by the elastic term. 
They are very well studied in the
 physical and numerical literature, see for instance \cite{GN, GurJab, RRV, SMV, SpMe}.
  Concerning rigorous mathematical analysis, we refer to \cite{BGZ15, Bo0, BC, DP, FFLM, FM09, GZ14} for some existence, regularity and stability results related to a  variational model  describing the equilibrium configurations of two-dimensional epitaxially strained elastic films, and to 
\cite{Bo,  CS07} for results in three-dimensions.
A hierarchy of variational principles  to describe  equilibrium shapes in the aforementioned contexts has been introduced in \cite{GuVo}. 
   
  In what follows we consider the following prototypical  energy
  \beq\label{prot1}
 \mathcal{J}(F):=\frac12\int_{\Om\setminus F}\C E(u_F):E(u_F)\, dx+\Ha^2(\pa F)\,. 
\eeq
The associated minimum problem under a volume constraint can be used to describe the equilibrium shapes of voids in elastically stressed solids (see for instance \cite{SMV}). 
Here, the set $F\subset\!\subset\Om$ represents the shape of the void that has formed within
the  elastic body $\Om$ (an open subset of  $\R^3$), $u_F$ stands for the equilibrium elastic   displacement in $\Om\setminus F$ subject to a prescribed boundary conditions $u_F=w_0$ on $\pa \Om$ (see \eqref{uf} below), $\C$ is the elasticity tensor 
of the (linearly) elastic material, $E(u_F):=(D u_F+D^T u_F)/2$ denotes the elastic strain of $u_F$, and $\Ha^2$ stands for the surface measure.
 The presence of a nontrivial Dirichlet boundary condition $u_F=w_0$ on $\pa \Om$ is what causes the solid $\Om\setminus F$ to be elastically stressed.  
We refer to \cite{ CJP, FFLMi} for related existence, regularity and stability results in two dimensions.  See also \cite{BCS} for a relaxation result  valid in all dimensions for a variant of \eqref{prot1}.

In this paper we study    the  morphological evolution of shapes towards equilibria of the functional   \eqref{prot1}, driven by stress and surface diffusion.  
 Assuming that relaxation to equilibrium  in the bulk occurs at a much faster time scale,  see \cite{Mu63},  we have, according to the Einstein-Nernst equation, that the evolution is governed by the following {\em volume preserving}  law
 \beq\label{i1}
V_t=\Delta_{\pa F_t}\mu_t \qquad\text{on $\pa F_t$} 
\eeq
where $V_t$ denotes the outer normal velocity of the evolving surface $\pa F_t$ at time $t$ and $\Delta_{\pa F_t} \mu_t$  stands for the Laplace-Beltrami operator acting  on the chemical potential $\mu_t$ along $\pa F_t$. In turn, since $\mu_t$ is given by the {\em first variation} of the free-energy functional $\mathcal{J}$ evaluated at $F_t$ and  taking into account \eqref{eq:J'} below,  \eqref{i1} reads as  
  \beq\label{i2}
 V_t = \Delta_{\pa F_t} \big(H_{F_t} -Q(E(u_{F_t})) \big)\,, 
  \eeq
where $H_{F_t}$ is the sum of the principal  curvatures of $\pa F_t$, with the orientation given by the outer normal,      $u_{F_t}$ is the elastic equilibrium in $\Om\setminus F_t$ subject to
 $u_{F_t}=w_0$  on $\pa \Om$ and  $Q(E(u_{F_t})):=\frac12 \C E(u_{F_t}):E(u_{F_t})$.  Note that the last quantitity involves the traces of the gradient of the elastic equilibrium on the evolving boundary. 

 From the mathematical point of view, \eqref{i2} is a fourth order geometric parabolic equation coupled with the elliptic Lam\'e system, which is solved time by time in the (evolving) bulk. 
 Note also that when $w_0=0$ the elastic term vanishes and thus \eqref{i2} reduces to the pure {\em surface diffusion  flow }  
 \beq\label{sdintro}
 V_t=\Delta_{\pa F_t} H_{F_t}
 \eeq
  for evolving surfaces, studied in \cite{EMS} (in the general   $n$-dimensional case).  Thus, we may also regard \eqref{i2} as a sort of  canonical  nonlocal perturbation   of \eqref{sdintro} by an additive elastic contribution.

 As observed already by Cahn and Taylor \cite{cahn-taylor94} for \eqref{sdintro}, the  equation \eqref{i2} can be seen formally as  the gradient flow of the energy functional $\mathcal{J}$ with respect to a suitable Riemannian metric of $H^{-1}$-type, see for instance \cite[Remark~3.1]{surf2D}.

 Let us mention that in the physical literature a variant of the energy \eqref{prot1} with a \emph{curvature regularization} term has also been considered, see \cite{angenent-gurtin89, BHSV,  dicarlo-gurtin-guidugli92, herring51, RRV, SMV}. This in turn leads to a variant of \eqref{i2} with a sixth order regularization term.  In particular, in \cite{FFLM3} the following regularized energy 
 $$
 \mathcal{J}_\e(F):=\frac12\int_{\Om\setminus F}\C E(u_F):E(u_F)\, dx+\int_{\pa F}\Big(1+\frac\e p|H_F|^p\Big)\, d\Ha^2
 $$
 and the associated evolution equation 
 \beq\label{6th}
V_t=\Delta_{\pa F_t}\Big[H_{F_t}-Q(E(u_{F_t}))-\e\Bigl(\Delta_{\pa F_t}(|H_{F_t}|^{p-2}H_{ F_t})
- |H_{F_t}|^{p-2}H_{F_t}\Bigl(\tfrac{p-1}{p}H_{F_t}^2-2K_{\pa F_t}\Bigr)\Bigr)\Big]
 \eeq
 are considered in the context of periodic graphs modeling the evolutions of epitaxially strained elastic films (see also \cite{FFLM2} for the two-dimensional version of the same equation). Here $K_{\pa F_t}$ stands for the Gaussian curvature of $\pa F_t$, $\e>0$ is a small parameter,  and $p>2$. 
 The local-in-time existence and the asymptotic stability results proven in \cite{FFLM3} (see also \cite{FFLM2, piove}) rely heavily on the presence of the curvature regularization, which makes the elastic contribution a lower order term  easily controlled by the sixth order leading terms of the equation.
 In fact, all the estimates provided there are $\e$-dependent and degenerate as $\e\to 0^+$. {This is  not surprising as the nonlocal elastic term  in \eqref{prot1} cannot  be  treated simply as a lower order perturbation of the perimeter, as shown by the fact that its presence may lead to formation of singularities in the static case (see \cite{FM09} and references therein). }Thus the case $\e=0$ requires completely different  methods.
 
{  A first breakthrough in this direction has been obtained in \cite{surf2D}, where short time existence result for \eqref{i2} was proved in  the two-dimensional case. In  \cite{surf2D} we also proved the asymptotic stability of strickly stable stationary sets. However,  the techniques developed there cannot be applied to  higher dimensions, as some of the crucial estimates rely on the fact that  an $L^2$-bound  of the curvature of the evolving curves provides uniform $C^{1,\alpha}$-bounds. This is of course no longer true in higher dimensions. Moreover, the higher dimensional case is of course much more involved from the geometric point of view. }

In this paper we are able to address equation \eqref{i2} in the physical three-dimensional case and we establish  short time existence and uniqueness of a  solution starting from sufficiently regular initial sets, see Theorem~\ref{thm surf}. We highlight that Theorem~\ref{thm surf} provides also quantitative estimates of the $k$-th order derivatives of the solution depending only on the $H^3$-norm of the initial datum,  somewhat in the spirit of those proved in \cite{KL}.  
We also remark that in general one cannot expect global-in-time existence. Indeed, even when no elasticity is present, singularities such as pinching may develop in finite time, see for instance \cite{GigaIto}.

In the second main result of the paper we establish  global-in-time existence and study the long-time behavior  for a  class of initial data:   we show that {\em strictly stable stationary sets}, that is, sets  $G$ that are stationary  for the energy functional $\mathcal{J}$ and  with positive second variation $\pa^2\mathcal{J}(G)$ are  {\em exponentially stable}  for the flow \eqref{i2}. More precisely, if the initial set $F_0$ is sufficiently close in $H^3$  to the strictly stable set $G$ and has the same volume, then the flow \eqref{i2} starting from $F_0$ exists for all times and converges  to $G$ exponentially fast in $C^k$ for every $k$ as $t\to+\infty$, see Theorem~\ref{thmstability} for the precise statement.

A few comments on the proofs are in order. Concerning short-time existence, as in \cite{surf2D} our strategy is based on the natural idea of thinking of the elastic contribution $Q$ as a forcing term. More precisely,  we set up a fixed point argument on the map $f\mapsto Q(E(u_{F^f_t}))$, where $F^f_t$ is the solution to the forced flow
\beq\label{forcedintro}
 V_t = \Delta_{\pa F_t} \big(H_{F_t} -f \big)\,.
\eeq
Major technical difficulties  originate  from the already mentioned fact that the nonlocal elastic term   is not in general  lower order with respect to  the perimeter.  One of the  main  technical breakthroughs obtained in the present paper is a new delicate elliptic estimate on the higher order derivatives of $Q(E(u_{F_t}))$ in terms of the higher order norms of the evolving boundaries $\pa F_t$, see Theorem~\ref{linearestimate}. The crucial and somewhat surprising point of this result is the linear structure of the estimate,  which allows us to show that the map $f\mapsto Q(E(u_{F^f_t}))$ is a contraction.

Concerning the asymptotic stability analysis, we adapt to the present situation  the methods developed in \cite{AFJM} for the  surface diffusion flow without elasticity (see also \cite{surf2D}).
The rough idea is to look at the asymptotic behavior of the map
$$
t\mapsto\int_{\pa F_t}\big|\nabla_{\pa F_t}\big(H_{F_t} -Q(E(u_{F_t})\big)\big|^2\, d\Ha^2\,,
$$
where $\nabla_{\pa F_t}$ stands for the tangential gradient on $\pa F_t$, and to show that it is decreasing and that in fact it vanishes with exponential rate as $t\to +\infty$. A crucial role in this analysis is played by the energy identity proven in Proposition~\ref{magic formula}  and by the estimates on the flow provided by Theorem~\ref{thm surf}. Let us remark  that such estimates allow us also to considerably simplify the arguments of \cite{AFJM} and to obtain stronger asymptotic convergence results.
 
This paper is organized as follows.  
 In Section~\ref{sec:preliminaries}  we set up  the problem, introduce the main notation and present some  differential geometry preliminaries that will be useful in the subsequent analysis. We also collect several auxiliary results concerning the energy functional $\mathcal{J}$ in \eqref{prot1}. In particular, we describe some properties of strictly stable stationary sets that are  crucial for the asymptotic stability analysis carried out in Section~\ref{sec:stability}. Section~\ref{sec:forced} is devoted to the study of \eqref{forcedintro}, while the  short-time existence theory for  the flow \eqref{i2} is addressed in Section~\ref{sec:existence}.
 In Section~\ref{sec:graphs} we briefly illustrate how to apply our main existence and asymptotic stability results in the case of evolving periodic graphs, that is in the geometric setting considered in \cite{FFLM3}. In particular, in Theorem~\ref{th:2dliapunov} we address  the exponential asymptotic stability of  {\em flat configurations}, thus extending to the evolutionary setting the results of \cite{Bo}.
 In the final Appendix we collect the proofs of two technical lemmas and provide the derivation of the energy identity stated in Proposition~\ref{magic formula}.

 We conclude this introduction by mentioning that it would be interesting to investigate whether  the flow \eqref{6th} studied in \cite{FFLM3} converge to   \eqref{i2} as $\e\to 0^+$. This issue could be probably addressed by adapting the methods developed in \cite{BMN}.

\section{Preliminaries}\label{sec:preliminaries}

\subsection{Geometric preliminaries}

In this section we introduce notation related to Riemannian geometry.  As an introduction to the topic we refer to \cite{AubinBook1, Lee}.  Let $\Sigma \subset \R^n$ be a smooth $(n-1)$-dimensional compact hypersurface without boundary. 
Since $\Sigma$ is embedded in $\R^n$ it has a natural metric, denoted by  $g$,  induced by the Euclidean metric. We thus have a Riemannian manifold $(\Sigma, g)$ and we denote the inner product for vector fields  $X, Y$ as $\la X,Y \ra $,
\[
\la X,Y \ra = g(X,Y) = g_{ij} X^iY^j,
\]
where the last expression is in local coordinates. Throughout the paper we adopt the Einstein summation convention.  Similarly we define the inner product of covector fields $\omega, \eta$,  which in  local coordinates can be written as
\[
\la \omega,\eta  \ra =  g^{ij} \omega_i \eta_j,
\]
where $g^{ij}$ is the inverse matrix of $g_{ij}$. The inner product extends to $\binom{k}{0}$-tensor fields $T = T_{i_1\cdots i_k}	$ and $S= S_{j_1\cdots j_k}$ as
\[
\la T,S  \ra =  g^{i_1j_1}\cdots g^{i_kj_k} T_{i_1\cdots i_k}S_{j_1\cdots j_k}.
\]
The norm of a tensor $T$ is then $|T| =\sqrt{\la T, T \ra }$ and we have the inequality $\la T,S  \ra \leq |T| |S|$. Given a $\binom{k}{0}$-tensor field $T$ we raise the first index by $T_{i_2\cdots i_{k}}^{i_1} = g^{i_1 l}T_{l \,  i_2\cdots i_{k}}$  and thus we obtain a $\binom{k-1}{1}$-tensor field. We may thus write the above inner product as
\[
\la T,S  \ra =   T^{j_1\cdots j_k} S_{j_1\cdots j_k}.
\]
The trace of a $\binom{k}{0}$-tensor field $T$, with $k\geq 2$, on the first two indeces is $\text{tr}\, T = g^{jl} T_{jl \,  i_3 \cdots  i_{k}}$. 

We denote the Riemannian connection on $(\Sigma, g)$ by $\nabla$ and $\nabla^k T = \nabla_{i_1}\cdots \nabla_{i_k} T$ means the $k$-th covariant derivative of a  tensor field $T$.   There is  a slight danger of confusion, since $\nabla^k f$ also denotes  the $k$-th component of the  gradient of a function $f$ defined by raising the index of $\nabla f$ as $\nabla^k f = g^{ki} \nabla_i f$. However, the meaning of $\nabla^k f$  will be clear from the context. We also recall that $\nabla$ is compatible with the metric $g$ which means that $\nabla g = 0$.

 In local coordinates the components of the covariant derivative of a vector field $X = X^i$ and of a covector field $\omega = \omega_k$ are 
\[
\nabla_j X^i = \frac{\pa X^i}{\pa x^j} + \Gamma_{jk}^i X^k \qquad  \text{and}\qquad  \nabla_j \omega_k=  \frac{\pa \omega_k}{\pa x^j} - \Gamma_{jk}^l \omega_l,
\] 
where $\Gamma_{ij}^k$ are the Christoffel symbols given in local coordinates by
\[
\Gamma_{ij}^k = \frac12 g^{kl}\left( \frac{\pa g_{jl}}{\pa x^i} + \frac{\pa g_{il}}{\pa x^j} - \frac{\pa g_{ij}}{\pa x^l}    \right).
\]
The covariant derivative of a $\binom{k}{l}$-tensor field $T = T_{i_1\cdots i_k}^{j_1\cdots j_l}$  is thus a $\binom{k+1}{l}$-tensor field  which in local coordinates can be written as
\[
\nabla_m  T_{i_1\cdots i_k}^{j_1\cdots j_l} = \frac{\pa T_{i_1\cdots i_k}^{j_1\cdots j_l} }{\pa x^m}  + \sum_{s=1}^l T_{i_1\cdots  i_k}^{j_1\cdots p \cdots j_l} \Gamma_{m p}^{j_s}  - \sum_{s=1}^k T_{i_1\cdots p \cdots  i_k}^{j_1\cdots j_l}  \Gamma_{m i_s}^p.
\]

The divergence of a vector field $X^i$ is $\diver X = \nabla_i X^i =  \frac{\pa X^i}{\pa x^i} + \Gamma_{ik}^i X^k $ and the Laplace-Beltrami  of a function $f$ is 
\[
\Delta f = \diver \nabla f = \nabla_i \nabla^i f .
\]
 This can be written as the trace of the covariant Hessian $\nabla^2 f$ as
\[
\Delta f = \text{tr} \,  \nabla^2 f = g^{ij} \nabla_i \nabla_j f. 
\]
We recall the divergence theorem for compact manifolds (without boundary), which states that for a vector field $X$ on $\Sigma$ it holds
\[
\int_{\Sigma} \diver X \, d\Ha^{n-1} = 0.
\]
This yields the integration by parts formula for a function $f$ and a vector field $X$ 
\[
\int_{\Sigma} X^i \nabla_i f \, d\Ha^{n-1} =  - \int_{\Sigma} f \diver X   \, d\Ha^{n-1}.
\]
The integration by parts formula  generalizes to any $\binom{k}{0}$-tensor field $T$ and $\binom{k+1}{0}$-tensor field $S$ as
\beq \label{int by parts}
\int_{\Sigma} \la  \nabla T, S \ra \, d\Ha^{n-1} =  - \int_{\Sigma} \la T , \text{tr} \, \nabla S\ra    \, d\Ha^{n-1},
\eeq
where the trace is on the first two indeces of $\nabla S$.

The Riemann curvature endomorhpism is a $\binom{3}{1}$-tensor field $R_{ijk}^l$ defined such that for vector fields $X,Y,Z$ we have
\[
R(X,Y)Z = \nabla_X \nabla_Y Z -  \nabla_Y \nabla_X Z - \nabla_{[X,Y]}Z,
\]
where $\nabla_X$ is the covariant derivative in direction of  $X$. We adopt  the convention to define the Riemann curvature tensor by lowering the index to the end, i.e., 
$R_{ijkl} = g_{lm} R_{ijk}^m$. The commutation formula of the covariant derivatives for a vector field $X^k$ thus becomes
\beq \label{comm 1}
\nabla_i \nabla_j X^k - \nabla_j \nabla_i X^k = g^{km} R_{ijlm} X^l
\eeq
and for a covector field $\omega_k$ 
$$
\nabla_i \nabla_j \omega_k - \nabla_j \nabla_i \omega_k =- g^{ml} R_{ijkm} \omega_l.
$$

Similar formulas hold for the commutation of higher order covariant derivatives. In particular, throughout the paper we will make repeated use of the fact that for any integer $k\geq 3$ there exists a constant $C>0$ such that
\beq\label{comm 5}
|\nabla_{i_1}\dots\nabla_{i_k}f-\nabla_{i_{\sigma(1)}}\dots\nabla_{i_{\sigma(k)}}f|\leq C\sum_{l=1}^{k-2}|\nabla^l f|
\eeq
for any choice of the indices $i_1, \dots, i_k$ and for any permutation $\sigma$ of $\{1,\dots, k\}$. We recall also that 
$\nabla_i\nabla_j f=\nabla_j\nabla_i f$ for any $i, j$.

Given a positive integer $k$ and $p\in[1,\infty]$ we denote by  $W^{k,p}(\Sigma)$ the Sobolev space endowed with the norm
\[
\|f\|_{W^{k,p}(\Sigma)}:=\sum_{m=0}^k\bigg(\int_\Sigma|\nabla^mf|^p\,d\mathcal H^{n-1}\bigg)^{\frac{1}{p}},
\]
when $p\in [1,\infty)$ and the obvious one when $p=\infty$. Here $\nabla^mf$ stands for the $m$-th covariant derivative of $f$. As customary, when $p=2$ we shall always write $H^k$ instead of $W^{k,2}$. We further define the  norms $\|f\|_{C^{k,\alpha}(\Sigma)}$, $\|f\|_{H^{k+1/2}(\Sigma)}$ and $\|f\|_{H^{-1/2}(\Sigma)}$ with $k \in \mathbb{N}$ and $\alpha \in (0,1)$, in a standard way using the partition of unity. Then the standard embedding theorems for  smooth domains hold also in these spaces. Moreover, we recall the following well known interpolation inequalities, see \cite[Proposition~6.5]{MantegazzaGAFA} and \cite[Theorem~3.70]{AubinBook2}. 
\begin{lemma}
\label{aubinlemma} 
Let $\Sigma\subset\R^n$ be a smooth $(n-1)$-dimensional compact manifold without boundary. Let $l,m,k$ be  integers such that $0\leq l< m$, $k\geq0$, $1\leq q,r\leq\infty$. There exists a constant $C$ with the following property: for every smooth covariant tensor $T$ of order $k$,   one has
\beq\label{aubinlemma1}
\|\nabla^lT\|_{L^p(\Sigma)}\leq C\|T\|_{W^{m,r}(\Sigma)}^\vartheta\|T\|_{L^q(\Sigma)}^{1-\vartheta},
\eeq
where 
\[
\frac{1}{p}=\frac{l}{n-1}+\vartheta\Big(\frac{1}{r}-\frac{m}{n-1}\Big)+(1-\vartheta)\frac{1}{q}
\]
for all $\vartheta\in[l/m,1)$ for which $p$ is nonnegative. Moreover, if $f$ is a smooth function then 
\[
\|\nabla^lf\|_{L^p(\Sigma)}\leq C\|\nabla^mf\|_{L^r(\Sigma)}^\vartheta\|f\|_{L^q(\Sigma)}^{1-\vartheta},
\] 
for all $\vartheta\in[l/m,1)$ for which $p$ is nonnegative, provided $l\geq 1$.
\end{lemma}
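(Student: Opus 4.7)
The plan is to reduce both statements to the classical Gagliardo--Nirenberg interpolation inequalities on bounded Euclidean domains and absorb the manifold corrections by induction on the order of the derivative. Since $\Sigma$ is a smooth compact $(n-1)$-dimensional hypersurface, fix a finite atlas $\{(U_\alpha,\psi_\alpha)\}$ with a subordinate smooth partition of unity $\{\varphi_\alpha\}$; up to choosing the charts with sufficiently small images we may assume that in each coordinate patch the metric $g$ and its inverse $g^{-1}$, together with all their derivatives up to any prescribed order, are uniformly bounded above and below. In particular the Christoffel symbols $\Gamma_{ij}^k$ and all their derivatives up to order $m$ are uniformly bounded on $\psi_\alpha(U_\alpha)$ by constants depending only on $\Sigma$.

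I would first treat the scalar case. For a smooth function $f$ and each chart $\alpha$, set $f_\alpha:=\varphi_\alpha f\circ\psi_\alpha^{-1}$ on $\psi_\alpha(U_\alpha)\subset\R^{n-1}$, and observe that by an iterated use of
$$\nabla_j\omega_k = \partial_j\omega_k - \Gamma_{jk}^l\omega_l,$$
every covariant derivative $\nabla^l f$ differs from the corresponding Euclidean derivative $D^l f_\alpha$ only by terms linear in $D^{s}f_\alpha$ with $s<l$, whose coefficients are polynomial expressions in the $g^{ij}$, $\Gamma^k_{ij}$ and their derivatives. Thus, reading the statement chart by chart, the claim reduces up to lower order corrections to the classical Gagliardo--Nirenberg inequality on a bounded Euclidean domain with compactly supported functions: for $l\ge 1$,
$$\|D^l f_\alpha\|_{L^p}\leq C\|D^m f_\alpha\|_{L^r}^\vartheta\|f_\alpha\|_{L^q}^{1-\vartheta},$$
under the prescribed scaling condition on $p$, $r$, $q$, $l$, $m$ and $\vartheta\in[l/m,1)$. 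Summing over $\alpha$ and converting back to covariant derivatives produces the desired inequality with an extra term controlled by lower order norms $\|\nabla^s f\|_{L^{p_s}(\Sigma)}$ with $s<l$. A standard induction argument on $l$, combined with the elementary interpolation
$$\|\nabla^s f\|_{L^{p_s}(\Sigma)}\le \eta\|\nabla^l f\|_{L^p(\Sigma)} + C_\eta\|f\|_{L^q(\Sigma)}$$
applied with $\eta$ small enough, absorbs these lower order contributions into the left hand side.

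For the tensor case one proceeds in exactly the same way, working component by component in each chart: each component $T_{i_1\cdots i_k}$ is a scalar function to which the Euclidean Gagliardo--Nirenberg inequality applies, while the switch between $\partial T$ and $\nabla T$ produces additional zeroth order factors bounded in terms of the Christoffel symbols. The norms of the components of $T$ are pointwise equivalent, up to multiplicative constants depending only on $g$ and hence on $\Sigma$, to $|T|$ as defined by the intrinsic inner product, so Euclidean $L^p$ bounds on components convert into intrinsic $L^p$ bounds on $T$. The main technical nuisance is keeping track of the fact that the lower order contributions now involve $T$ itself (rather than only its derivatives), which is why the statement for tensors only allows interpolation between $W^{m,r}$ and $L^q$ and not a pure $\|\nabla^m T\|_{L^r}^\vartheta\|T\|_{L^q}^{1-\vartheta}$ bound; this is precisely the reason the function case carries the additional requirement $l\ge 1$. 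Once the bookkeeping is set up this way, the inequalities follow exactly as in \cite[Proposition~6.5]{MantegazzaGAFA} and \cite[Theorem~3.70]{AubinBook2}, and the expected main obstacle, namely controlling the error in passing from Euclidean to covariant derivatives, is dealt with uniformly thanks to the compactness and smoothness of $\Sigma$.
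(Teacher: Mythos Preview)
The paper does not actually prove this lemma: it is stated as a recollection of well-known interpolation inequalities, with the proof deferred entirely to the references \cite[Proposition~6.5]{MantegazzaGAFA} and \cite[Theorem~3.70]{AubinBook2}. Your sketch is a correct outline of precisely the argument one finds in those references---localize via a finite atlas and partition of unity, apply the Euclidean Gagliardo--Nirenberg inequality chart by chart, and absorb the lower-order corrections coming from the Christoffel symbols by induction on $l$. Your remark distinguishing the tensor case (where the corrections involve $T$ itself, forcing the full $W^{m,r}$-norm on the right) from the scalar case with $l\ge 1$ (where corrections only involve derivatives of $f$, so that $\|\nabla^m f\|_{L^r}$ suffices) is also accurate; the restriction $l\ge 1$ in the scalar inequality is needed in any case, since constants show it fails for $l=0$.
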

\begin{remark}\label{rm:aubinlemma}
Note that \eqref{aubinlemma1} implies also that 
$$
\|\nabla^lT\|_{L^p(\Sigma)}\leq C\|\nabla^mT\|_{L^r(\Sigma)}^\vartheta\|T\|_{L^q(\Sigma)}^{1-\vartheta}+C\|T\|_{L^{\max\{q,r\}}(\Sigma)}\,.
$$
To see this it is enough to observe that $\|T\|_{W^{m,r}(\Sigma)}=\|T\|_{W^{m-1,r}(\Sigma)}+\|\nabla^mT\|_{L^r(\Sigma)}$ and that, in turn, 
for every $l=1, \dots, m-1$ using \eqref{aubinlemma1} and Young's Inequality one gets 
$$
\|\nabla^l T\|_{L^r(\Sigma)}\leq \e\|T\|_{W^{m,r}(\Sigma)}+ C_\e\|T\|_{L^r(\Sigma)}.
$$
\end{remark}
We also recall that the Morrey's inequality implies
$$
\|f\|_{C^{1,\alpha}(\Sigma)} \leq C\|f\|_{W^{2,p}(\Sigma)} 
$$
for $p > n-1$ and $\alpha=1-(n-1)/p$.

We will also need the following result, (see the proof of  \cite[Theorem 4.19]{AubinBook2}).
\begin{lemma}
\label{interchange 1}
Let $f$ be a smooth function on $\Sigma$ and let $k$ be a positive integer. There is a constant $C$, which depends on $k$ and $\Sigma$, such that 
\beq\label{schifio1}
\|\nabla^{2k}f\|_{L^2(\Sigma)}^2 \leq \int_{\Sigma} (\Delta^k f)^2 \, d \Ha^{n-1} + C\|f\|_{H^{2k-1}(\Sigma)}^2
\eeq
and 
\beq\label{schifio2}
\|\nabla^{2k+1}f\|_{L^2(\Sigma)}^2 \leq \int_{\Sigma} |\nabla(\Delta^k f)|^2 \, d \Ha^{n-1} + C\|f\|_{H^{2k}(\Sigma)}^2.
\eeq
\end{lemma}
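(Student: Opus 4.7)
The approach I would take is induction on $k$, using repeated applications of the integration-by-parts formula \eqref{int by parts} together with the commutation formula \eqref{comm 5}. Throughout, since $\Sigma$ is smooth and compact, the Riemann curvature tensor and all its covariant derivatives are uniformly bounded, so curvature contractions can be estimated by pointwise universal constants.

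For the base case $k=1$ of the first inequality, I would integrate $\int (\Delta f)^2$ by parts twice and apply the Ricci identity $\nabla_i \Delta f=\Delta\nabla_i f+R_i^{\,j}\nabla_j f$, to obtain the Bochner-type formula
\[
\int_\Sigma (\Delta f)^2\, d\mathcal H^{n-1}=\int_\Sigma |\nabla^2 f|^2\, d\mathcal H^{n-1}+\int_\Sigma \mathrm{Ric}(\nabla f,\nabla f)\, d\mathcal H^{n-1}.
\]
Since $\mathrm{Ric}$ is bounded, the last term is $\leq C\|\nabla f\|_{L^2}^2\leq C\|f\|_{H^1}^2$, which gives the base case.

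For the inductive step, rather than trying to reduce $\|\nabla^{2k}f\|_{L^2}^2$ to $\|\nabla^2(\Delta^{k-1}f)\|_{L^2}^2$ (which is really only a partial trace of $\nabla^{2k}f$, so the reduction is not direct), I would work from scratch. Starting from
\[
\int_\Sigma |\nabla^{2k}f|^2\, d\mathcal H^{n-1}=\int_\Sigma \langle \nabla(\nabla^{2k-1}f),\nabla^{2k}f\rangle\, d\mathcal H^{n-1},
\]
an application of \eqref{int by parts} (with $T=\nabla^{2k-1}f$ and $S=\nabla^{2k}f$) rewrites this as $-\int_\Sigma \langle \nabla^{2k-1}f,\Delta_{\text{rough}}\nabla^{2k-1}f\rangle\, d\mathcal H^{n-1}$, where $\Delta_{\text{rough}}T=g^{ab}\nabla_a\nabla_b T$. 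Using \eqref{comm 5}, one has $\Delta_{\text{rough}}\nabla^{2k-1}f=\nabla^{2k-1}\Delta f+E_1$ with $E_1$ a sum of curvature contractions involving at most $(2k-1)$-order derivatives of $f$. Iterating this two-step procedure (an IBP followed by commuting the rough Laplacian past the remaining covariant derivatives) $2k$ times in total converts the integral into $\int_\Sigma f\cdot\Delta^{2k}f\, d\mathcal H^{n-1}$ plus errors; a final chain of IBPs rewrites this as $\int_\Sigma(\Delta^k f)^2\, d\mathcal H^{n-1}$. The second inequality is proved in identical fashion starting from $\int_\Sigma|\nabla^{2k+1}f|^2$, which after $2k$ iterations becomes $\int_\Sigma\langle \nabla f,\nabla\Delta^{2k}f\rangle=\int_\Sigma |\nabla \Delta^k f|^2$ modulo error terms.

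The hard part will be the bookkeeping of the commutator errors: at the $j$-th step one picks up a term of the form $\int_\Sigma\langle\nabla^{2k-j}f, R*\nabla^{\leq 2k+j-2}f\rangle$, and for $j$ close to $2k$ the second factor nominally has order exceeding $2k-1$. To handle this I would, before passing to the next iteration, integrate by parts within each such error integral so as to redistribute derivatives symmetrically between the two factors; coupled with Cauchy–Schwarz and Young's inequality this produces (after possible absorption of a small fraction of the main term on the left-hand side) a clean bound by $C\|f\|_{H^{2k-1}}^2$ for the first inequality (respectively $C\|f\|_{H^{2k}}^2$ for the second). The interpolation inequality of Lemma~\ref{aubinlemma} and Remark~\ref{rm:aubinlemma} is the tool that allows any intermediate-order derivative to be controlled by the endpoint norms, closing the induction.
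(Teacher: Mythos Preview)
Your proposal is correct and rests on the same mechanism as the paper's proof: repeated integration by parts \eqref{int by parts} combined with the commutation estimate \eqref{comm 5}, with curvature contributions absorbed into the lower-order remainder. The base case is handled identically via the Bochner identity.

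The only organizational difference is that the paper proceeds genuinely inductively: for $k=2$ it first applies the $k=1$ case to $\Delta f$ to obtain $\int(\Delta^2 f)^2\geq\int|\nabla^2\Delta f|^2-C\|f\|_{H^3}^2$, and then proves separately that $\int|\nabla^2\Delta f|^2\geq\int|\nabla^4 f|^2-C\|f\|_{H^3}^2$ by a short chain of IBPs and commutations (and indicates that higher $k$ follows the same pattern, peeling off one Laplacian at a time). You instead run a single long chain directly from $\int|\nabla^{2k}f|^2$ down to $\int(\Delta^k f)^2$. The paper's staging keeps the commutator errors at each step automatically of order at most $2k-1$, whereas in your direct chain the errors at later steps nominally involve higher-order derivatives and require the additional IBP-redistribution you describe; both routes close, but the paper's is a bit cleaner in the bookkeeping.
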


\begin{proof}
We only proof \eqref{schifio1} in the cases $k=1,2$, since the higher order cases and \eqref{schifio2} are analogous. Recall that Ricci tensor is  given by $R_{jm} = g^{ik}R_{ijmk}$.
Thus from \eqref{comm 1}, with $X$ equal to the covariant gradient of $f$ and taking $k=i$, we get 
 \[
\nabla_i \nabla_j \nabla^i f - \nabla_j \Delta f=  R_{jl} \nabla^l f.
\]
We multiply the above equality  by $\nabla^j f$ and use the integration by parts formula \eqref{int by parts} to obtain 
\[
- \int_{\Sigma} \nabla_i \nabla^j f  \, \nabla_j \nabla^i f  \, d \Ha^{n-1}  +  \int_{\Sigma} (\Delta f)^2\, d \Ha^{n-1}  = \int_{\Sigma} R_{ij} \nabla^i f \, \nabla^j f   \, d \Ha^{n-1}.
\]
This yields the claim since (recall that for a function $\nabla_i \nabla_j f = \nabla_j \nabla_i f$)
\[
\nabla_i \nabla^j f  \, \nabla_j \nabla^i f   =  \nabla^i \nabla^j f  \, \nabla_i \nabla_j f  = |\nabla^2 f|^2.
\]
The argument in the case $k=2$ is similar but more technical. We have by the previous  statement 
\[
\int_{\Sigma} |\Delta^{2} f|^2  \, d \Ha^{n-1}    \geq  \int_{\Sigma} |\nabla^2  \Delta f |^2 \, d \Ha^{n-1}   -  C\|f\|_{H^{3}(\Sigma)}^2.
\] 
Hence, we need to prove that 
\beq \label{interchange 3}
\int_{\Sigma} |\nabla^2  \Delta f |^2  \, d \Ha^{n-1}    \geq  \int_{\Sigma} |\nabla^4  f |^2 \, d \Ha^{n-1}   -  C\|f\|_{H^{3}(\Sigma)}^2.
\eeq
First, by the integration by parts formula \eqref{int by parts} we have
\[
\begin{split}
\int_{\Sigma} |\nabla^2  \Delta f |^2  \, d \Ha^{n-1} &=  \int_{\Sigma} (\nabla^i \nabla^j  \nabla_k \nabla^k  f) \, (\nabla_i\nabla_j \nabla^l \nabla_l f) \, d \Ha^{n-1}\\
&=-  \int_{\Sigma} (\nabla_i \nabla^i \nabla^j  \nabla_k \nabla^k  f) \,  (\nabla_j \nabla^l \nabla_l f) \, d \Ha^{n-1}.
\end{split}
\]
Then, using \eqref{comm 5}, we obtain
\[
\begin{split}
\int_{\Sigma} |\nabla^2  \Delta f |^2  \, d \Ha^{n-1} &\geq - \int_{\Sigma} (\nabla_k \nabla_i \nabla^i \nabla^j   \nabla^k  f) \,  (\nabla_j \nabla^l \nabla_l f) \, d \Ha^{n-1} - C\|f\|_{H^{3}(\Sigma)}^2\\
&= - \int_{\Sigma} (\nabla^i \nabla^j   \nabla^k  f) \,  (\nabla_i \nabla_k \nabla_j \nabla^l \nabla_l f) \, d \Ha^{n-1} - C\|f\|_{H^{3}(\Sigma)}^2,
\end{split}
\]
where the last equality follows by integration by parts. We proceed   using  formula   \eqref{comm 5} again and integration by parts 
 to deduce
\[
\begin{split}
\int_{\Sigma} |\nabla^2  \Delta f |^2  \, d \Ha^{n-1} &\geq  - \int_{\Sigma} (\nabla^i \nabla^j   \nabla^k  f) \,  (\nabla^l  \nabla_i \nabla_j \nabla_k \nabla_l f) \, d \Ha^{n-1} - C\|f\|_{H^{3}(\Sigma)}^2 \\
&= - \int_{\Sigma} (\nabla_i \nabla^l   \nabla^i \nabla^j   \nabla^k  f) \,  ( \nabla_j \nabla_k \nabla_l f) \, d \Ha^{n-1} - C\|f\|_{H^{3}(\Sigma)}^2\\
&\geq - \int_{\Sigma} (\nabla_i   \nabla^i \nabla^j   \nabla^k  \nabla^l  f) \,  ( \nabla_j \nabla_k  \nabla_l f) \, d \Ha^{n-1} - C\|f\|_{H^{3}(\Sigma)}^2\\
&=  \int_{\Sigma} ( \nabla^i \nabla^j   \nabla^k  \nabla^l  f) \,   ( \nabla_i   \nabla_j \nabla_k  \nabla_l f) \, d \Ha^{n-1} - C\|f\|_{H^{3}(\Sigma)}^2.
\end{split}
\]
Thus we have \eqref{interchange 3}, since $( \nabla^i \nabla^j   \nabla^k  \nabla^l  f) \,   ( \nabla_i   \nabla_j \nabla_k  \nabla_l f) = |\nabla^4 f|^2$.
\end{proof}

\begin{remark}
\label{rem aub 1}
In the case $k=1$  we have a more precise version of  Lemma \ref{interchange 1} for hypersurfaces. It is clear that the proof of Lemma \ref{interchange 1}  implies that 
\[
 \int_{\Sigma} |\nabla^2 f|^2 \, d \Ha^{n-1}  \leq \int_{\Sigma} (\Delta f)^2 \, d \Ha^{n-1} + (\sqrt{n-1}+1) \int_{\Sigma} |B|^2 |\nabla f|^2\, d \Ha^{n-1},
\]
where $B$ denotes the (scalar) second fundamental form (see \cite{Lee} for definition). This follows from the fact that we may estimate the  Ricci curvature by $|\text{Ric}| \leq (\sqrt{n-1}+1)|B|^2$. 
\end{remark}

\begin{remark}
\label{rem aub 2} Using  Lemma~\ref{aubinlemma} we may  write the statement of  Lemma~\ref{interchange 1} in the following way. For every $\e>0$ there exists $C_\e>0$ such that 
\[
\|f\|_{H^{2k}(\Sigma)}^2 \leq (1+\e) \int_{\Sigma} (\Delta^k f)^2 \, d \Ha^{n-1} + C_\e\|f\|_{L^{2}(\Sigma)}^2
\]
and
\[
\| f\|_{H^{2k+1}(\Sigma)}^2 \leq (1+\e) \int_{\Sigma} |\nabla(\Delta^k f)|^2 \, d \Ha^{n-1} + C_\e\|f\|_{L^{2}(\Sigma)}^2.
\]
Indeed, this follows by   the interpolation inequality  together with standard Young's inequality
\[
\begin{split}
\| \nabla^{l}f\|_{L^{2}(\Sigma)} &\leq C \| \nabla^{h}f\|^\theta_{L^{2}(\Sigma)} \|f\|^{1-\theta}_{L^{2}(\Sigma)} \\
&\leq \eps \| \nabla^{h}f\|_{L^{2}(\Sigma)} + C(\eps)  \|f\|_{L^{2}(\Sigma)}
\end{split}
\]
for every $1 \leq l \leq h-1$ and $\theta=\theta(h,l)$ is given by Lemma~\ref{aubinlemma}.

\end{remark}

For clarity we denote the standard inner product  between two vectors $x,y$ in  $\R^n$  as $x \cdot y$ and  the differential  of  the map   $F : \R^n \to \R^m$ by $DF$ to distinguish them from the inner  product on manifold and  from the 
covariant derivative. There is, however, a possibility of confusion when we denote the divergence of a vector field $X :  \R^n \to \R^n$ by $\Div X$, since ``$\Div$'' also denotes the divergence of a vector field on manifold. We will denote 
 the divergence of a vector field on the manifold $(\Sigma,g)$ by  $\Div_g$ and in  $\R^n$ by $\Div_{\R^n}$ if this is not clear from the context.     

When  the manifold $\Sigma$ is given by a  boundary of a smooth bounded set $F \subset \R^n$ it has a natural orientation and we denote by $\nu_F$ the unit outer normal. In this case we may extend the definition of divergence on $\Sigma$ to  vector fields which have values in $\R^n$. Let $X: U \to \R^n$ be a smooth vector field, where $U$ is an open neighborhood of $\Sigma$. We define the tangential divergence of $X$ on $\pa F$ by
\[
\Div_\tau X  := \Div X  -  \la DX \nu_F , \nu_F \ra  . 
\] 
The divergence theorem  states
$$
\int_{\pa F}  \Div_\tau X  \, d \Ha^{n-1} = \int_{\pa F}  H_F  \, ( X \cdot \nu_F )  \, d \Ha^{n-1} ,
$$
where $H_F$ denotes the sum of the principal curvatures of $\pa F$. We denote  the second fundamental form of $\pa F$ by $B_F$, which in our case is a symmetric $\binom{2}{0}$-tensor (or equivalently a symmetric matrix). Finally we may project a vector field $X: U \to \R^n$ to the 
tangent space of $\pa F$ by 
\beq
\label{proj tang}
 X_\tau:= X - (X  \cdot \nu_F)\nu_F.
\eeq
Then  $X_\tau$ canonically defines a vector field on $(\pa F,g)$  and   we denote  by $\Div_g X_\tau$ its divergence. For a given function $u : U \to \R$ we define the tangential gradient on $\Sigma = \pa F$ as  the projection of its gradient $D u$ 
\beq
\label{tang grad}
 D_\tau u :=(D u)_ \tau.
\eeq
The tangential gradient and   the covariant gradient are canonically isomorphic. In particular, it holds  
\beq
\label{cov tang}
|\nabla u (x)|_g =  |D_\tau u (x) |  \qquad \text{for } \, x \in \Sigma,  
\eeq
where $|\cdot|_g$ denotes the norm given by the metric tensor $g$, and $|\cdot |$ is the length of a vector in $\R^n$.

\subsection{The energy functional} 
In this section we introduce the energy functional that underlies the flow. We also introduce the proper notions of stationary points and stability that will be needed in the study of the long-time behavior of the flow. As explained in the introduction, the free energy functional is the sum of  the   perimeter and of a bulk elastic term. Throughout the paper $\Om$ will denote a fixed bounded open set of $\R^3$ with Lipschitz boundary.

Concerning the elastic part, for  $F \subset \! \subset \Omega$ and for an elastic displacement $u: \Omega\setminus F\to \R^2$ we denote by $E(u)$ the symmetric part of $D u$, that is, $E(u):= \frac{D u + (D u)^T}{2}$. In what follows,  $\C$ stands for    the {\em elasticity tensor}   acting on $3\times 3$-matrices, such that 
$\C A=\frac12\C (A+A^T)$ and $\C A$ is symmetric for all $3\times 3$-matrices $A$.  Moreover,  $\C A:A>0$ if $A$ is symmetric and $A\neq 0$. Finally we shall denote by 
 $Q(A) := \frac{1}{2}\C A : A$ the {\em elastic energy density}. 

We are now ready to write the energy functional. For a fixed {\em boundary displacement}  $w_0\in H^{\frac12}(\pa \Om)$, we set 
\begin{equation} \label{energy}
\mathcal{J}(F) :=  \int_{\Omega \setminus F} Q(E(u_F))\, dx + \Ha^2(\pa F)\, ,
\end{equation}
where $u_F$ is the  elastic equilibrium  satisfying the Dirichlet boundary condition $w_0$ on a fixed relatively open subset $\pa_D \Om\subseteq \pa \Om$. More precisely, $u_F$ is the unique solution in  $H^1(\Omega \setminus F; \R^3)$ of the following elliptic system
\beq\label{uf}
\begin{cases}
\Div \C E(u_F)=0 & \text{in }\Om\setminus F,\\
\C E(u_F)[\nu_F]=0 & \text{on }\pa F\cup (\pa \Om\setminus \pa_D\Om),\\
u_F=w_0 &\text {on }\pa_D\Om.
\end{cases}
\eeq
Note that by the second condition for every $x\in \pa F$ the vector $\C E(u_F)(x) [e]$ belongs to the tangent space of  $\pa F$ at $x$ for every  vector $e$.

Next, we provide the first and the second variation formulas for \eqref{energy}.
To this aim, for any  
 vector field $X \in C_c^1(\R^3; \R^3)$, let  $(\Phi_t)_{t\in (-1,1)}$  be  the associated flow, that is the solution of  
 \beq\label{flussoX}
 \begin{cases}
\displaystyle \frac{\pa \Phi_t}{\pa t}=X(\Phi_t),\\
 \Phi_0=Id.
 \end{cases}
 \eeq
The first and the second variation of the functional  \eqref{energy} are stated in the following theorem. Recall that $H_F$ denotes the sum of the principal curvatures  and $B_F$ the second 
fundamental form of $\pa F$. Sometimes, with a slight abuse of terminology, we will refer to $H_F$ as the mean curvature of $\pa F$.
\begin{theorem}\label{th:12var}
 Let $F\subset\subset\Om$ be a smooth set, $X \in C_c^1(\Om; \R^2)$ and let $(\Phi_t)_{t\in (-1,1)}$ be the associated flow as in \eqref{flussoX}. Set $\psi:=X\cdot \nu_F$   on $\pa F$ and let $X_\tau$ be as in \eqref{proj tang}. 
 Then, 
 \beq\label{eq:J'}
\frac{d}{dt}\mathcal{J}(\Phi_t(F))_{\bigl|_{t=0}}=\int_{\pa F} (H_F-Q(E(u_F))) \psi\, d\Ha^{2}. 
\eeq
If in addition $\Div_{\R^n} X=0$ in a neighborhood of $\pa F$ we have
\begin{align}\label{eq:J''}
\frac{d^2}{dt^2}\mathcal{J}(\Phi_t(F))_{\bigl|_{t=0}}&=
\int_{\pa F}   |\nabla \psi|^2-     |B_F|^2  \psi^2\, d \Ha^2  - 2 \int_{\Omega \setminus  F} Q(E(u_\psi)) \, dx\nonumber\\ 
&  -  \int_{\pa F}     \pa_{\nu_F} (Q(E(u_F)))  \psi^2 \, d \Ha^2
-\int_{\pa F}(H_F-Q(E(u_F)))\Div_g (\psi X_\tau)\, d\Ha^2,
\end{align}
where the function $u_\psi$ is the unique solution in $H^1(\Om\setminus F; \R^3)$, with 
$u_\psi=0$ on $\pa_D\Om$, of
\begin{equation}\label{eq u dot2}
\int_{\Om \setminus F} \C E(u_\psi) : E(\vphi) \, dx = -\int_{\pa F}\Div_g (\psi \, \C E(u_F) ) \cdot \vphi \, d \Ha^2 
\end{equation}
for all $\varphi \in H^1(\Om \setminus F; \R^2)$ such that $\varphi = 0$ on $\pa_D\Omega$.   
\end{theorem}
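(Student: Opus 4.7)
The plan is to derive the two formulas by computing the shape derivatives of the perimeter and of the elastic bulk separately along the flow $\Phi_t$, and to deduce \eqref{eq:J''} from \eqref{eq:J'} by differentiating the first variation identity once more at a generic time. Writing $F_t:=\Phi_t(F)$, each step reduces either to a tangential divergence computation or to exploiting the Euler--Lagrange equation solved by $u_{F_t}$.

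\emph{First variation.} The perimeter piece follows from the tangential divergence theorem: $\frac{d}{dt}\Ha^2(\pa F_t)|_{t=0}=\int_{\pa F}\Div_\tau X\,d\Ha^2=\int_{\pa F}H_F\psi\,d\Ha^2$. For the elastic energy I would pull back the minimizer, setting $v_t:=u_{F_t}\circ\Phi_t\in H^1(\Om\setminus F;\R^3)$ (note $v_t=w_0$ on $\pa_D\Om$ since $X\in C_c^1(\Om;\R^3)$), and rewriting $\int_{\Om\setminus F_t}Q(E(u_{F_t}))\,dy=\int_{\Om\setminus F}Q(Dv_t\,(D\Phi_t)^{-1})\,J\Phi_t\,dx$. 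Differentiating in $t$ at $0$, the contribution of $\dot v:=\pa_t v_t|_{t=0}$ vanishes because $\dot v$ vanishes on $\pa_D\Om$ and therefore is an admissible test function for \eqref{uf} at $t=0$. The geometric terms from differentiating the matrix $(D\Phi_t)^{-1}$ and the Jacobian $J\Phi_t$ give, after an integration by parts, exactly $-\int_{\pa F}Q(E(u_F))\psi\,d\Ha^2$, which combined with the perimeter contribution yields \eqref{eq:J'}.

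\emph{Second variation.} I would apply \eqref{eq:J'} at a generic time $s$, namely $\frac{d}{dt}\mathcal J(F_t)|_{t=s}=\int_{\pa F_s}(H_{F_s}-Q(E(u_{F_s})))(X\cdot\nu_{F_s})\,d\Ha^2$, and differentiate once more at $s=0$. The shape derivative of the mean curvature in the normal direction is the Jacobi operator $-\Delta_{\pa F}\psi-|B_F|^2\psi$, which after integration by parts against $\psi$ produces $|\nabla\psi|^2-|B_F|^2\psi^2$. The derivative of the measure $(X\cdot\nu_{F_s})d\Ha^2$, combined with $\Div_{\R^n}X=0$ near $\pa F$, cancels the spurious zeroth order terms from the normal velocity; the purely tangential part of $X$ generates the coupling $\Div_g(\psi X_\tau)$ with the Euler--Lagrange multiplier $H_F-Q(E(u_F))$, producing the last integral in \eqref{eq:J''}. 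For the elastic piece, the $s$-derivative of $Q(E(u_{F_s}))$ at the moving trace splits into the normal transport $\pa_{\nu_F}(Q(E(u_F)))\psi$ and a linearization involving the shape derivative $u_\psi$. Testing the weak form of \eqref{uf} at time $s$ against $\vphi\circ\Phi_s^{-1}$ and differentiating at $s=0$ identifies $u_\psi$ as the unique solution of \eqref{eq u dot2}, and testing \eqref{eq u dot2} against $u_\psi$ itself converts the boundary linearization into $-2\int_{\Om\setminus F}Q(E(u_\psi))\,dx$.

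\emph{Main obstacle.} The delicate point is the rigorous derivation of the boundary distribution on the right-hand side of \eqref{eq u dot2}. The Neumann condition $\C E(u_{F_t})[\nu_{F_t}]=0$ lives on the moving surface $\pa F_t$, and differentiating it at $t=0$ requires tracking both the trace of $Du_{F_t}$ and the rotation of $\nu_{F_t}$; the correct limit is a surface forcing that, after tangential integration by parts, coincides with $-\Div_g(\psi\,\C E(u_F))$ as a functional on test fields vanishing on $\pa_D\Om$. Once this is established and the assumption $\Div_{\R^n}X=0$ near $\pa F$ is used to discard the higher order geometric remainders in both the curvature and the elastic terms, the collected pieces assemble into \eqref{eq:J''}.
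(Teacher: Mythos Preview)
The paper does not give its own proof of this theorem: immediately after the statement it simply says that formulas \eqref{eq:J'} and \eqref{eq:J''} were derived in \cite{Bo} for the case of periodic graphs and that ``the very same calculations apply to the more general situation considered here.'' Your outline is exactly the standard shape-derivative computation that underlies that reference---pull back the elastic equilibrium through $\Phi_t$ to kill the $\dot v$-contribution via the Euler--Lagrange equation, then differentiate the first variation identity at a generic time $s$ and sort the pieces (Jacobi operator from $H_{F_s}$, normal transport and linearization from $Q(E(u_{F_s}))$, residual tangential term from the non-normal part of $X$)---and it is correct in substance. The one place where your sketch is slightly glib is the bookkeeping of the tangential part: the material derivative of $H_{F_s}\circ\Phi_s$ at $s=0$ is $-\Delta_{\pa F}\psi-|B_F|^2\psi+\la\nabla H_F,X_\tau\ra$, not just the Jacobi operator, and similarly for $Q(E(u_{F_s}))$; these tangential transport terms, together with the derivative of $(X\cdot\nu_{F_s})\,d\Ha^2$ under the hypothesis $\Div_{\R^3}X=0$, are precisely what recombine into the single integral $-\int_{\pa F}(H_F-Q(E(u_F)))\Div_g(\psi X_\tau)\,d\Ha^2$ after an integration by parts on $\pa F$. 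Once you track this explicitly the argument is complete.
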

Formulas \eqref{eq:J'} and \eqref{eq:J''} have been derived  in    \cite{Bo}  when $F$ is the subgraph of a periodic function. The very same calculations apply to the more general situation considered here.

Throughout the paper we fix a smooth reference set $G \subset\!\subset \Om$ and  define the reference manifold as $(\Sigma, g)$, where $\Sigma= \pa G$ and $g$ is the metric induced by the Euclidean metric in $\R^3$. We denote the outer normal of $G$ simply by $\nu$. 
For every $\eta>0$ we denote 
$$
\mathcal{N}_\eta(\Sigma):=\{x\in \R^3:\, |d_G(x)|<\eta\},
$$
where $d_G$ denotes the signed distance function of $G$. Denote also  $\pi$ the orthogonal projection on the boundary of $G$.
Since $G$ is smooth, 
\beq\label{eta0}
\text{there exists $\eta_0>0$ such that $d_G$ and $\pi$ are smooth in $\mathcal{N}_{2\eta_0}(\Sigma)$.} 
\eeq
 We denote by $ \mathfrak{h}_M^k(\Sigma)$ the following class of sets, whose boundary is a  suitable normal graph over $\Sigma$. Precisely,  for any integer $k\geq 1$ and $M>0$ we say 
\beq\label{mathfrakh}
F \in \mathfrak{h}_M^k(\Sigma) \quad \text{if } \, \pa F = \{ x + h_F(x) \nu(x) :  x \in \Sigma\} \subset \mathcal{N}_{\eta_0}(\Sigma) \qquad \text{with} \quad \|h_F\|_{H^k(\Sigma)} \leq  M .  
\eeq
In particular,  by Morrey embedding any set in $\mathfrak{h}_M^3(\Sigma)$ is $C^{1, \alpha}$-diffeomorphic to the reference set $G$ for every $\alpha\in (0,1)$. 
The space $\mathfrak{h}_M^{k,\alpha}(\Sigma)$, $\alpha\in (0,1)$, is defined similarly in terms of the $C^{k,\alpha}$-norm of the function $h_F$.

Let    $G_1$, \dots, $G_m$ be the bounded open sets enclosed by the connected components $\Gamma_{G,1}$, \dots, $\Gamma_{G,m}$ of the boundary $\pa G$. Note that the $G_i$'s are not in general the connected components of $G$ and it may happen that $G_i\subset G_j$ for some $i \neq j$.  If  $F\in \mathfrak{h}^{3}_M(\Sigma)$, 
 then $F$ is  $C^1$-diffeomorphic to $G$ and thus $\pa F$ has the same number $m$ of connected components $\Gamma_{F,1}$, \dots, $\Gamma_{F,m}$, which can be numbered in such a way that 
\beq\label{numbered}
\Gamma_{F,i}=  \{ x + h_{F}(x) \nu(x):\, x\in \Gamma_{G,i}  \},
\eeq
 for a suitable $h_{F}\in H^{3}(\Sigma)$. The boundaries  $\Gamma_{F,i}$ then enclose the sets $F_{i}$, which in turn are diffeomorpic to $G_i$.

We are  interested in area preserving variations, in the following sense.
\begin{definition}\label{def:admissibleX}
 Let $F\subset\subset\Om$ be a smooth set.  Given a vector field 
 $X\in C^\infty_c(\Om; \R^3)$, we say that  the associated flow
  $(\Phi_t)_{t\in (-1,1)}$ is {\em admissible for $F$} if there exists $\e_0\in (0,1)$ such that
  $$
  |\Phi_t(F_i)|=|F_i|\quad\text{for $t\in (-\e_0,\e_0)$ and $i=1,\dots, m$.}
  $$
\end{definition}
\begin{remark}\label{rm:mn}
Note that if the flow associated with $X$ is admissible in the sense of the previous definition, then 
for $i=1,\dots, m$ we have 
$$
\int_{\Gamma_{F,i}}X\cdot\nu_F\, d\Ha^1=0.
$$
In view of this remark it is convenient to introduce the space
$\tilde H^1(\pa F)$   consisting of all functions $\psi \in H^1(\pa F)$ with zero average on each component of  $\pa F$, i.e., 
\[
\int_{\Gamma_{F,i}} \psi \, d \Ha^1 = 0 \qquad \text{for every } \, i = 1, \dots, m.
\]
Any admissible vector field $X$ thus defines a function $\psi \in \tilde H^1(\pa F)$. Conversely,   given $\psi\in \tilde H^1(\pa F)\cap C^{\infty}(\pa F)$ it is possible to construct a  sequence of  vector fields $X_n\in C^\infty_c(\Om; \R^2)$, with $\Div_{\R^n} X_n=0$ in a neighborhood of $\overline F$, such that 
$X_n\cdot \nu_F\to \psi$ in $C^1(\pa F)$, see \cite[Proof of Corollary~3.4]{AFM} for the details. Note that in particular   the flows associated with $X_n$ are admissible. 
\end{remark}

\begin{definition}
\label{def stationarity}
Let $F \subset \!\subset  \Omega$  be a set of class $C^2$. We say that $F$ is \emph{stationary} if
$$
\frac{d}{dt}\mathcal{J}(\Phi_t(F))_{\bigl|_{t=0}}=0
$$
for all admissible flows in the sense of Definition~\ref{def:admissibleX}.
\end{definition}

\begin{remark}\label{rm:station}
By Remark~\ref{rm:mn} and in view of \eqref{eq:J'} it follows that a set $F \subset \!\subset  \Omega$  of class $C^2$ is stationary if and only if  there exist constants $\lambda_1, \dots, \lambda_m$ such that  
\[
H_F - Q(E(u_F)) = \lambda_i \qquad \text{on }\, \Gamma_{F,i}
\]
for every $i = 1,\dots, m$. Note that if $F$ is a  sufficiently regular (local) minimizer of \eqref{energy} under the constraint 
$|F|=const.$, then  there exists a  constant $\lambda$ such that 
\[
H_F - Q(E(u_F)) = \lambda \qquad \text{on }\, \pa F.
\]
Thus, our notion of stationarity differs from the usual notion of criticality just recalled. Note that by a bootstrap argument it can be proved that 
a stationary set is smooth. In fact, it can be shown that it is even analytic, see \cite{KLM}. 
 Note that if $F$ is stationary, then the second variation formula \eqref{eq:J''} reduces to 
\begin{align} \label{sv}
\frac{d^2}{dt^2}\mathcal{J}(\Phi_t(F))_{\bigl|_{t=0}}= &
\int_{\pa F}   |\nabla \psi|^2-     |B_F|^2  \psi^2\, d \Ha^2  \nonumber\\
&- 2 \int_{\Omega \setminus  F} Q(E(u_\psi)) \, dx  
  -  \int_{\pa F}     \pa_{\nu_F} (Q(E(u_F)))  \psi^2 \, d \Ha^2,
\end{align}
where we recall that $\psi= X\cdot \nu_F$ and $u_\psi$ is the function satisfying \eqref{eq u dot2}. 
\end{remark}
 
 In view of \eqref{sv},  for any set $F\subset\subset\Om$ of class $C^2$ it is convenient to introduce the  quadratic form $\pa^2 \mathcal{J}(F)$ defined on $\tilde H^1(\pa F)$ as
\beq \label{eq:pa2J}
\begin{split}
\pa^2\mathcal{J}(F)[\psi] :=& \int_{\pa F}  |\nabla \psi|^2-     |B_F|^2  \psi^2\, d \Ha^2       \\
&- 2 \int_{\Omega \setminus  F} Q(E(u_\psi)) \, dx  
  -  \int_{\pa F}     \pa_{\nu_F} (Q(E(u_F)))  \psi^2 \, d \Ha^2,
\end{split}
\eeq  
where $u_\psi$ is the unique solution of  \eqref{eq u dot2} under the Dirichlet condition $u_\psi=0$ on 
$\pa_D\Om$. We may finally give the  definition of stability for a stationary point. 
\begin{definition}\label{def:stable}
Let $F\subset\subset\Om$ be a stationary set in the sense of Definition~\ref{def stationarity}. We say that $F$ is \emph{strictly stable} if 
\beq\label{j2>0}
\pa^2\mathcal{J}(F)[\psi]>0\qquad\text{for all }\psi\in \tilde H^1(\pa F)\setminus\{0\}.
\eeq
\end{definition}
It is not difficult to see that \eqref{j2>0} is equivalent to the coercivity of $\pa^2\mathcal{J}(F)$ on $ \tilde H^1(\pa F)$. More precisely, \eqref{j2>0} holds if and only if there exists $c_0>0$ such that 
\beq\label{emmepiccolo0}
\pa^2\mathcal{J}(F)[\psi]\geq c_0\|\psi\|^2_{\tilde H^1(\pa F)}\qquad\text{for all }\psi\in \tilde H^1(\pa F),
\eeq
see \cite{Bo}. In turn the latter coercivity property  is stable  with respect to small $H^{3}$-perturbations.  More precisely, we have:
\begin{lemma}\label{lemma:j2>0near}
Assume that the reference set $G\subset\subset\Om$ is  a (smooth) strictly stable stationary set in the sense of Definition~\ref{def:stable}. Then, there exists  $\sigma_0>0$ such that  for all $F \in  \mathfrak{h}_{\sigma_0}^3(\Sigma)$, {defined in \eqref{mathfrakh}}, we have
$$
\pa^2\mathcal{J}(F)[\psi]\geq \frac{c_0}2\|\psi\|^2_{\tilde H^1(\pa F)} \text{ for all $\psi\in \tilde H^1(\pa  F)$,}
$$
where $c_0$ is the constant in \eqref{emmepiccolo0}.
\end{lemma}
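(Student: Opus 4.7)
The plan is to transport the quadratic form $\pa^2\mathcal{J}(F)[\psi]$ onto the fixed reference manifold $\Sigma=\pa G$ via the diffeomorphism $\Phi_F(x):=x+h_F(x)\nu(x)$, and then to invoke the strict coercivity \eqref{emmepiccolo0} of $\pa^2\mathcal{J}(G)$ on $\tilde H^1(\Sigma)$. Since $\Sigma$ is $2$-dimensional, Morrey's inequality yields $H^3(\Sigma)\hookrightarrow C^{1,\alpha}(\Sigma)$ for every $\alpha\in(0,1)$, so that $\|h_F\|_{C^{1,\alpha}(\Sigma)}\leq C\sigma_0$ and $\Phi_F$ is a $C^{1,\alpha}$-diffeomorphism whose metric, unit normal, and area element are $C^\alpha$-close to those of $\Sigma$, uniformly in $F\in\mathfrak{h}_{\sigma_0}^3(\Sigma)$.

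Given $\psi\in\tilde H^1(\pa F)$, set $\psi^{\ast}:=\psi\circ\Phi_F\in H^1(\Sigma)$ and define
$$
\tilde\psi:=\psi^{\ast}-\sum_{i=1}^m c_i\chi_{\Gamma_{G,i}},\qquad c_i:=\frac{1}{\Ha^2(\Gamma_{G,i})}\int_{\Gamma_{G,i}}\psi^{\ast}\,d\Ha^2,
$$
so that $\tilde\psi\in\tilde H^1(\Sigma)$. Using the zero-mean property $\int_{\Gamma_{F,i}}\psi\,d\Ha^2=0$ and writing $\int_{\Gamma_{G,i}}\psi^{\ast}\,d\Ha^2=\int_{\Gamma_{G,i}}\psi^{\ast}(1-J_i)\,d\Ha^2$, where $J_i$ is the tangential Jacobian of $\Phi_F$ on $\Gamma_{G,i}$, together with $\|1-J_i\|_{L^\infty}\leq C\sigma_0$, one obtains $|c_i|\leq C\sigma_0\|\psi\|_{L^2(\pa F)}$; hence by the $C^\alpha$-closeness of metric and area element,
$$
(1-\omega(\sigma_0))\|\psi\|^2_{\tilde H^1(\pa F)}\leq\|\tilde\psi\|^2_{\tilde H^1(\Sigma)}\leq(1+\omega(\sigma_0))\|\psi\|^2_{\tilde H^1(\pa F)},
$$
for some modulus $\omega(\sigma_0)\to 0$ as $\sigma_0\to 0$.

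Next I compare the terms of \eqref{eq:pa2J} evaluated at $(F,\psi)$ with those at $(G,\tilde\psi)$. The Dirichlet term $\int_{\pa F}|\nabla\psi|^2\,d\Ha^2$ transforms into $\int_\Sigma|\nabla\psi^{\ast}|^2\,d\Ha^2$ with an $O(\sigma_0^\alpha)$ relative error, and differs from $\int_\Sigma|\nabla\tilde\psi|^2\,d\Ha^2$ only by constants, controlled by the previous display. The curvature term $\int_{\pa F}|B_F|^2\psi^2$ is compared to $\int_\Sigma|B_G|^2\tilde\psi^2$ by combining $H^3(\Sigma)\hookrightarrow W^{2,q}(\Sigma)$ for every finite $q$, which gives $B_F\to B_G$ in $L^q$, with the Sobolev embedding $H^1(\Sigma)\hookrightarrow L^{q'}(\Sigma)$ for any $q'<\infty$ and Hölder's inequality. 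For the nonlocal elastic contributions, a pullback of the Lamé system \eqref{uf} from $\Om\setminus F$ to $\Om\setminus G$ together with standard Schauder/energy estimates for elliptic systems on $C^{1,\alpha}$-domains yields $u_F\to u_G$ in $C^{1,\alpha}$ (up to pullback) and $\pa_{\nu_F}Q(E(u_F))\to\pa_\nu Q(E(u_G))$ in $C^\alpha$; by the same pullback, the linearized displacement $u_\psi$ solving \eqref{eq u dot2} depends continuously on $(F,\psi)$ in the $H^1$-norm, so that $\int_{\Om\setminus F}Q(E(u_\psi))\,dx$ differs from $\int_{\Om\setminus G}Q(E(u_{\tilde\psi}))\,dx$ by an error bounded by $\omega(\sigma_0)\|\psi\|^2_{L^2(\pa F)}$.

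Putting everything together,
$$
\pa^2\mathcal{J}(F)[\psi]\geq\pa^2\mathcal{J}(G)[\tilde\psi]-\omega(\sigma_0)\|\psi\|^2_{\tilde H^1(\pa F)}\geq c_0\|\tilde\psi\|^2_{\tilde H^1(\Sigma)}-\omega(\sigma_0)\|\psi\|^2_{\tilde H^1(\pa F)},
$$
and the claim follows upon choosing $\sigma_0$ small enough so that $\omega(\sigma_0)\leq c_0/4$ and applying the lower bound on $\|\tilde\psi\|^2_{\tilde H^1(\Sigma)}$. The main technical obstacle is in the elastic part: establishing continuous dependence of $u_F$ and $u_\psi$ on the $H^3$-perturbation $h_F$ across the moving domain $\Om\setminus F$, which requires a careful pullback of the Lamé system with mixed boundary conditions (Dirichlet on $\pa_D\Om$, traction-free on $\pa F$ and on $\pa\Om\setminus\pa_D\Om$) and the resulting Schauder-type estimates in $C^{1,\alpha}$.
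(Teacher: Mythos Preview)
Your strategy---pull back $\pa^2\mathcal{J}(F)$ to $\Sigma$ via $\Phi_F$, correct $\psi^{\ast}$ to $\tilde\psi\in\tilde H^1(\Sigma)$, compare term by term, and use the coercivity \eqref{emmepiccolo0}---is exactly the one the paper invokes, by reference to \cite[Theorem~5.2 and Lemma~5.3]{Bo}. So the overall route is correct and essentially the same.

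There is, however, one overclaim that would break the argument as written. You assert that $\pa_{\nu_F}Q(E(u_F))\to\pa_\nu Q(E(u_G))$ in $C^\alpha$ via ``Schauder/energy estimates''. But $h_F\in H^3(\Sigma)$ only gives $\pa F\in C^{1,\alpha}$ (not $C^{2,\alpha}$), so Schauder theory for the Lam\'e system with Neumann condition yields $u_F\in C^{1,\alpha}$ up to $\pa F$ and nothing more; the quantity $\pa_{\nu_F}Q(E(u_F))$ involves \emph{second} derivatives of $u_F$ on $\pa F$ and is not controlled pointwise by Schauder alone. The paper's own proof singles this out as the place where the $H^3$ hypothesis is exploited: one uses the elliptic estimate of Theorem~\ref{linearestimate} (with $k=3$) to obtain $u_F\in H^3$ in a one-sided neighborhood of $\pa F$, uniformly for $F\in\mathfrak{h}^3_{\sigma_0}(\Sigma)$, and hence $\pa_{\nu_F}Q(E(u_F))\in H^{1/2}(\pa F)$ with a uniform bound. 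This is enough for your comparison, since on the two-dimensional surface $H^{1/2}\hookrightarrow L^p$ for $p<\infty$ pairs against $\psi^2$, which lies in every $L^q$ by $H^1(\pa F)\hookrightarrow L^q$. If you replace the $C^\alpha$ claim by this $H^{1/2}$ bound (and the corresponding convergence, obtained from \eqref{seconda} and its proof), your error term $\omega(\sigma_0)\|\psi\|_{\tilde H^1(\pa F)}^2$ for the $\pa_{\nu_F}Q$-piece goes through.
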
 
\begin{proof}The proof follows the argument in  \cite[Proof of Theorem~5.2 and Lemma~5.3]{Bo}, where the case of $F$ being the subgraph of a periodic  function is considered. Although the geometric framework here is more general, we may  follow exactly the same line of argument up to the obvious changes due to the different setting. We note that in our case we may even simplify the aforementioned proof by taking  advantage of the fact that $F\in    \mathfrak{h}^{3}_{\sigma_0}(\Sigma)$ (while in \cite{Bo} only $W^{2,p}$-bounds were assumed). Indeed, under this assumption we have that $u_F$ is of class $H^3$ in a neighborhood of $\Sigma$, with the norm estimated by a constant depending on $\sigma_0$ (see the proof of Theorem~\ref{linearestimate}).
In turn, $\pa_{\nu_F}(Q(E(u_F)))\in H^{\frac12}(\pa F)$ with a bound depending on $\sigma_0$, which  is a much stronger information than the boundedness in $H^{-\frac12}(\pa F)$ proven in \cite{Bo}.  
\end{proof}

We conclude this section by showing that in a sufficiently small $H^{3}$-neighborhood of $G$ the stationary sets are isolated, once  we fix the areas enclosed by the connected components of the boundary. 
\begin{proposition}\label{stationary}
Assume that the reference set $G\subset\subset\Om$ is  a smooth strictly stable stationary set in the sense of Definition~\ref{def:stable} and let $\sigma_0$ be the constant provided by Lemma~\ref{lemma:j2>0near}. There exists $\sigma_1\in (0, \sigma_0)$ with the following property:  Let $F_1$, $F_2\in  \mathfrak{h}^{3}_{\sigma_1}(\Sigma)$, {defined in \eqref{mathfrakh}},  be stationary sets in the sense of Definition~\ref{def stationarity} and (with the same notation as in \eqref{numbered}) assume that $|F_{1, i}|=|F_{2,i}|$ for $i=1,\dots, m$. Then $F_1=F_2$.
\end{proposition}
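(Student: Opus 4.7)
The natural approach would be to view stationarity together with the volume constraints as a nonlinear equation $\mathcal{G}(h)=0$ for the normal-graph height $h\in H^3(\Sigma)$, and to show that $\mathcal{G}$ is a local diffeomorphism at $h=0$ via the inverse function theorem, so that two stationary graphs enclosing the same volumes and both close to $G$ are forced to coincide.

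First I would set up the map. For $h\in H^3(\Sigma)$ with $\|h\|_{H^3}$ sufficiently small, let $F_h$ be the set parametrized by $h$ as in \eqref{mathfrakh} and set
\[
S(h):=\bigl(H_{F_h}-Q(E(u_{F_h}))\bigr)\circ(I+h\nu):\Sigma\to\R,
\]
which lies in $H^1(\Sigma)$ thanks to the elliptic regularity for the Lam\'e system \eqref{uf} alluded to in the proof of Lemma~\ref{lemma:j2>0near}. Letting $\Pi$ denote the projection on $L^2(\Sigma)$ that subtracts the mean on each component $\Gamma_{G,i}$, I would define the $C^1$ map
\[
\mathcal{G}:H^3(\Sigma)\longrightarrow \tilde H^1(\Sigma)\times\R^m,\qquad \mathcal{G}(h):=\bigl(\Pi S(h),\,|F_{h,1}|,\dots,|F_{h,m}|\bigr).
\]
By Remark~\ref{rm:station}, $F_h$ is stationary with $|F_{h,i}|=V_i$ if and only if $\mathcal{G}(h)=(0,V_1,\dots,V_m)$; in particular, under the hypotheses of the proposition we have $\mathcal{G}(h_1)=\mathcal{G}(h_2)$.

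The main step would be to prove that $D\mathcal{G}(0)$ is an isomorphism from $H^3(\Sigma)$ onto $\tilde H^1(\Sigma)\times\R^m$. A direct shape-calculus computation based on \eqref{eq:J'}--\eqref{eq:J''} identifies $DS(0)$ (up to a sign) with the self-adjoint operator $L$ that polarizes $\partial^2\mathcal{J}(G)$ as in \eqref{eq:pa2J}, so $\int_\Sigma DS(0)[v]\,v\,d\Ha^2=\partial^2\mathcal{J}(G)[v]$; together with $D|F_{h,i}|_{|h=0}[v]=\int_{\Gamma_{G,i}} v\,d\Ha^2$ this gives
\[
D\mathcal{G}(0)[v]=\Bigl(\Pi Lv,\ \int_{\Gamma_{G,1}} v\,d\Ha^2,\dots,\int_{\Gamma_{G,m}} v\,d\Ha^2\Bigr).
\]
The operator $L$ is a lower-order, compact perturbation of $-\Delta_\Sigma$ (the nonlocal elastic contribution is a first-order Dirichlet-to-Neumann-type piece), so $v\mapsto Lv$ is Fredholm of index zero from $H^3(\Sigma)$ to $H^1(\Sigma)$, and so is $D\mathcal{G}(0)$ after the finite-rank volume correction. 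To rule out a nontrivial kernel, if $D\mathcal{G}(0)[v]=0$ then $v\in\tilde H^3(\Sigma)$ and $Lv=\sum_i\mu_i\mathbf{1}_{\Gamma_{G,i}}$ for some $\mu_i\in\R$, so that testing with $v$ yields
\[
\partial^2\mathcal{J}(G)[v]=\int_\Sigma Lv\cdot v\,d\Ha^2=\sum_i\mu_i\int_{\Gamma_{G,i}} v\,d\Ha^2=0,
\]
and the strict stability \eqref{j2>0} forces $v\equiv 0$. Hence $D\mathcal{G}(0)$ is invertible.

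Finally, the inverse function theorem gives a neighborhood $U$ of $0$ in $H^3(\Sigma)$ on which $\mathcal{G}$ is a $C^1$-diffeomorphism onto its image. Choosing $\sigma_1\in(0,\sigma_0)$ small enough that $\{h\in H^3(\Sigma):\|h\|_{H^3}\leq\sigma_1\}\subset U$, the identity $\mathcal{G}(h_1)=\mathcal{G}(h_2)$ forces $h_1=h_2$, i.e.\ $F_1=F_2$. The hardest part of the argument will be verifying that $h\mapsto S(h)$ is genuinely of class $C^1$ from $H^3(\Sigma)$ into $H^1(\Sigma)$: this requires differentiating with respect to $h$ the solution $u_{F_h}$ of \eqref{uf} on the moving domain $\Omega\setminus F_h$, together with elliptic regularity up to the free boundary $\partial F_h\in H^3$, so that the trace of $\nabla u_{F_h}$ (and its derivative in $h$) can be controlled. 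A secondary and routine concern is making precise the claim that the nonlocal elastic contribution to $L$ is truly a lower-order, compact perturbation on $\Sigma$, which is what preserves the Fredholm nature of $L$.
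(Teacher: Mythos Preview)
Your approach is correct in spirit and would yield the result, but it is genuinely different from the paper's argument. The paper does not use the inverse function theorem. Instead it connects $F_1$ to $F_2$ by an admissible volume-preserving flow $(\Phi_t)_{t\in[0,1]}$ (constructed as in \cite{Morini}) with $\Phi_0(F_1)=F_1$, $\Phi_1(F_1)=F_2$, and all intermediate sets $F_t$ still lying in $\mathfrak{h}^3_{C\sigma_1}(\Sigma)$. The second variation formula \eqref{eq:J''} together with Lemma~\ref{lemma:j2>0near} then gives
\[
\frac{d^2}{dt^2}\mathcal{J}(\Phi_t(F_1))\ \geq\ \frac{c_0}{2}\|X\cdot\nu_{F_t}\|_{H^1(\pa F_t)}^2\ -\ \Bigl|\int_{\pa F_t}\langle\nabla R_t,(X\cdot\nu_{F_t})X_\tau\rangle\,d\Ha^2\Bigr|,
\]
and the remainder is controlled via the estimate $\|\nabla R_t\|_{L^2(\pa F_t)}\leq C\|h_{F_t}\|_{H^3(\Sigma)}^{\theta/2}$ (this is \eqref{avain}, proved later in Section~\ref{sec:stability}) combined with Sobolev embedding, so that for $\sigma_1$ small the second derivative is nonnegative. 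Since stationarity of $F_1$ and $F_2$ gives $\frac{d}{dt}\mathcal{J}(\Phi_t(F_1))=0$ at both endpoints, convexity forces the map to be constant, hence $X\cdot\nu_{F_t}\equiv 0$ and $F_1=F_2$.

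Compared with this, your functional-analytic route trades one difficulty for another. The paper's proof is elementary once the coercivity Lemma~\ref{lemma:j2>0near} and the estimate \eqref{avain} are available, and it never needs to differentiate the elastic term in $h$: only the quadratic form $\partial^2\mathcal{J}(F_t)$ is used, which is well-defined for each fixed $t$. Your approach, by contrast, requires verifying that $h\mapsto Q(E(u_{F_h}))\circ\pi_{F_h}^{-1}$ is genuinely $C^1$ from $H^3(\Sigma)$ into $H^1(\Sigma)$ and that the linearized elastic contribution is a compact perturbation of $-\Delta_\Sigma$ between $H^3$ and $H^1$; both are believable (and you correctly flag them as the real work) but neither is established in the paper. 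On the other hand, your argument is more self-contained in that it does not rely on the forward reference to \eqref{avain}, and it gives slightly more: local uniqueness of stationary sets as a function of the prescribed volumes, not just equality of two given ones.
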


\begin{proof}
Let $F_1$ and $F_2$ be in $\mathfrak{h}^3_{\sigma_1}(\Sigma)$, with $\sigma_1\in (0, \sigma_0)$ to be chosen, and   denote the components defined in \eqref{numbered} by $F_{i,1}, \dots, F_{i,m}$ for $i = 1,2$. We begin by constructing a vector field $X : \mathcal{N}_{\eta_0}(\Sigma)  \to \R^3$
such that the associated  flow $(\Phi_t)_{t\in ([0,1])}$  is admissible is sense of Definition~\ref{rm:mn} and takes the set $F_1$ to $F_2$. More precisely, it holds  $\Phi_0(F_1) = F_1$,  $\Phi_1(F_1) = F_2$ and $|\Phi_t(F_{1, i})| = |F_{1, i}|$ for every $t \in [0,1]$ and $i = 1,\dots, m$. The construction can be  done as in \cite[Proposition~3.4]{Morini}  (see also \cite[Lemma~2.8]{surf2D}) in such a way that 
 $|X(x)| \leq 2 |X(x) \cdot \nu_{F_t}(x)| $ for $x \in \pa F_t$ and for all $t\in [0,1]$, and that 
\[
\pa F_t = \{ x + h_{F_t}(x) \nu(x) : x \in \Sigma \} \qquad \text{with } \quad \| h_{F_t} \|_{H^3(\Sigma)} \leq C\sigma_1<\sigma_0,
\]
where the last inequality holds provided that $\sigma_1$ is small enough. 
Recalling \eqref{eq:J''},  \eqref{eq:pa2J},  using the Lemma \ref{lemma:j2>0near}  and by integrating by parts we get
\[
\begin{split}
\frac{d^2}{dt^2}\mathcal{J}(\Phi_t(F_1)) &= \pa^2\mathcal{J}(F_t)[ X \cdot \nu_{F_t}]   -\int_{\pa F_t}(H_{F_t}-Q(E(u_{F_t})))\Div_g ( (X \cdot \nu_{F_t}) X_\tau)\, d\Ha^2  \\
&\geq \frac{c_0}2 \|X \cdot \nu_{F_t}\|_{H^1(\pa F_t)}^2 + \int_{\pa F_t} \la  \nabla (H_{F_t}-Q(E(u_{F_t}))) ,  (X \cdot \nu_{F_t}) X_\tau \ra  \, d\Ha^2.
\end{split}
\]
We denote $R_t := H_{F_t}-Q(E(u_{F_t}))$ and  estimate the last term by \eqref{avain}, which we will show later   in the proof of Theorem \ref{thmstability}, to get 
\[
\begin{split}
 \int_{\pa F_t} \la  \nabla R_t,  (X \cdot \nu_{F_t}) X_\tau \ra  \, d\Ha^2 &\leq \left(\int_{\pa F_t} | \nabla  R_t|^2 \,  d\Ha^2  \right)^{1/2}\left(\int_{\pa F_t} |(X \cdot \nu_{F_t}) X_\tau|^2 \,  d\Ha^2  \right)^{1/2}\\
&\leq C \| h_{F_t} \|_{H^3(\Sigma)}^{\theta/2} \left(\int_{\pa F_t} |X \cdot \nu_{F_t}|^4 \,  d\Ha^2  \right)^{1/2} \\
&\leq C \sigma_1^{\theta/2} \| X \cdot \nu_{F_t}\|_{L^4(\pa F_t)}^2.
\end{split}
\]
Therefore we have by the Sobolev embedding  
\[
\begin{split}
\frac{d^2}{dt^2}\mathcal{J}(\Phi_t(F_1)) &\geq  \frac{c_0}2 \|X \cdot \nu_{F_t}\|_{H^1(\pa F_t)}^2  - C  \sigma_1^{\theta/2} \| X \cdot \nu_{F_t}\|_{L^4(\pa F_t)}^2\\
&\geq \frac{c_0}2 \|X \cdot \nu_{F_t}\|_{H^1(\pa F_t)}^2  - C  \sigma_1^{\theta/2}  \| X \cdot \nu_{F_t}\|_{H^1(\pa F_t)}^2 \geq \frac{c_0}{4} \|X \cdot \nu_{F_t}\|_{H^1(\pa F_t)}^2,
\end{split}
\] 
provided that  $\sigma_1$ is small enough. 

On the other hand by the stationarity of $F_1$ and $F_2$ we have
\[
\frac{d}{dt}\mathcal{J}(\Phi_t(F_1))_{\bigl|_{t=0}} = \frac{d}{dt}\mathcal{J}(\Phi_t(F_1))_{\bigl|_{t=1}} = 0. 
\]
This means that $\frac{d^2}{dt^2}\mathcal{J}(\Phi_t(F_1)) = 0$ and therefore $X \cdot \nu_{F_t} = 0$ on $\pa F_t$ for all $t \in (0,1)$. Therefore $t \mapsto \Phi_t(F_1)$ is constant and $F_1 = F_2$. 

\end{proof}

\section{Short time existence for the surface diffusion with a forcing term}\label{sec:forced}

In the following we shall  assume $n=3$. Given a smooth function $f:\Sigma\times[0,+\infty)\to\R$ we shall consider the following 
forced surface diffusion equation
\beq\label{forced}
V_t=\Delta_{\pa F_t}(H_{F_t}+f(\cdot,t) \circ\pi)
\eeq 
where   $V_t$ denotes the outer normal velocity of $\pa F_t$ and $\Delta_{\pa F_t}$ is the Laplace-Beltrami operator on $\pa F_t$ endowed with the metric induced by the Euclidean metric.   The goal in this section is 
to prove short time existence of a unique smooth solution of \eqref{forced} starting from $F_0$ which is close to the reference set $G$. This will be done in Theorem \ref{thm with f}.

\subsection{The flow in coordinates} \label{sec12}

Given a sufficiently smooth function $h:\Sigma\to (-\eta_0, \eta_0)$, where $\eta_0$ is introduced in \eqref{eta0},  we denote by $F_h$ the bounded open set whose boundary is given by
$$
\pa F_h=\{ x + h(x) \nu(x) : x \in \Sigma\},
$$ 
where $\nu$ is the outer unit normal to $\pa G$. Note that the projection  $\pi|_{\pa F_h}:\pa F_h\to \Sigma$ is invertible and we denote by $\pi^{-1}_{F_h}$ its inverse. In this case we have  
$\pi^{-1}_{F_h}(x) =  x + h(x) \nu(x)$.  
 
 We denote by $\nu$ the normal and by $k_1, k_2$ the principle curvatures of $\Sigma$, while   $\tau_1, \tau_2$  denote the corresponding eigenvectors on the tangent plane. The exterior normal to $F_h$   is
\beq \label{normal}
\nu_{F_h} \circ \pi^{-1}_{F_h} = \frac{1}{J} \big( (1+hk_1) (1+hk_2)\nu - (1+hk_2) \partial_{\tau_1} h \,  \tau_1- (1+hk_1) \pa_{\tau_2} h \, \tau_2 \big),
\eeq
where $J^2 = (1+h k_1)^2(1+h k_2)^2+(1+h k_1)^2(\partial_{\tau_1} h)^2+(1+h k_2)^2(\partial_{\tau_2} h)^2$. We recall (see \cite[p. 21]{MantegazzaBook}) that the mean curvature $H_{F_h}$ of  $\pa F_h$ can be written as
 $$
 H_{F_h}\circ\pi^{-1}_{F_h}=-(\nu_{F_h}\circ\pi^{-1}_{F_h}\cdot \nu)\Delta h+P(x, h, \nabla h),
 $$
where $P$ is a smooth function such that $P(\cdot, 0, 0)=H_G$, the mean curvature of the boundary of $G$.   We  rewrite the above formula as 
\beq \label{mean curvature}
 H_{F_h}\circ\pi^{-1}_{F_h} =-\Delta h+\la A(x, h, \nabla h), \nabla^2h \ra + H_G+a(x, h, \nabla h),
 \eeq
 where the tensor $A$ and the function $a$ are smooth and vanish when both $h$ and $\nabla h$ are $0$.

 Let us denote by $g_h$ the pull-back metric on $\Sigma$ induced by the diffeomorphism $\pi^{-1}_{F_h}:\Sigma\to \pa F_h$.  Since the manifold $(\pa F_h,g)$ endowed with the Euclidean metric $g$  is isometric to $(\Sigma, g_{h})$
 then for every smooth function $f$ defined on $\Sigma$ we have
 $$
\big( \Delta_{\pa F_h}(f\circ \pi)\big)\circ{\pi_{F_h}^{-1}}=\Delta_{g_h}f\,
 $$
 where $\Delta_{g_h}$ is the  Laplace-Beltrami operator on $\Sigma$ with respect to the  metric $g_h$. One can also check that (see \cite[p. 21]{MantegazzaBook}) 
 $$
 (g_h)_{ij}=g_{ij}+a_{ij}(\cdot, h, \nabla h),
 $$
 where the functions $a_{ij}$ are smooth and vanish when both $h$ and $\nabla h$ vanish,  and that we have the following expansion of the Christoffel symbols
\[
( \Gamma_{g_h})_{jk}^i =(\Gamma_{g})_{jk}^i + a_{jk}^i(x,h, \nabla h)   +  b_{jk}^{ilm}(x,h, \nabla h)  \frac{\pa^2 h}{\pa x_l \pa x_m}\, .
\]
 Above $ b_{jk}^{ilm}$ is a smooth function and  $a_{jk}^i$ is a smooth function which vanish when  $h$ and $\nabla h$ vanish.   We recall that the we may write the Laplace-Beltrami operator $\Delta_{g_h}$  as
\[
 \Delta_{g_h} f:= (g_h)^{ij}\widetilde\nabla_i\widetilde \nabla_jf,
 \]
 where $\widetilde\nabla_i\widetilde \nabla_j$ stands for the second order covariant derivatives with respect to $g_h$. Hence we get by the above formulas and  after some straightforward calculations that
\beq\label{laplacianogm}
\Delta_{g_{h}}f=\Delta f+ \la A_1(x, h, \nabla h),\nabla^2f \ra+\la A_2(x,h,\nabla h),\nabla f \ra +\la B(x, h, \nabla h), (\nabla^2h\otimes\nabla f) \ra .
\eeq
Concerning the equation of interest, assume that a smooth  flow $(F_t)_{t \in (0,T)}$ is a solution  of  \eqref{forced} and that  $\pa F_t$ can be written as 
$$
\pa F_t=\{ x + h(x,t) \nu(x) : x \in \Sigma\} .
$$
Then the normal velocity is given by $V_t=\pa_th (\nu_{F_t}\cdot \nu)$. Therefore, combining \eqref{mean curvature}  and  \eqref{laplacianogm} and after long but straightforward calculations, we may rewrite the equation \eqref{forced}   as
\beq\label{forced2}
\begin{split}
\frac{\partial h}{\partial t}=-&\Delta^2h+ \la A(x,h, \nabla h),\nabla^4h \ra \\
&+J_1(x,h, \nabla h, \nabla^2h, \nabla^3h)+J_2(x,h, \nabla h, \nabla^2h, \nabla f,\nabla^2f),
\end{split}
\eeq
where as usual $A$ is a smooth 4th-order tensor depending on $(x,h, \nabla h)$ vanishing when both $h$ and $\nabla h$ vanish, $J_1$ is given by
\beq \label{J1}
\begin{split}
J_1 = \la B_1, \,&(\nabla^3h\otimes \nabla^2h) \ra +\la B_2, \nabla^3h \ra +\la B_3, (\nabla^2h\otimes \nabla^2h\otimes\nabla^2h) \ra \\
&+\la B_4, (\nabla^2h\otimes \nabla^2h) \ra +\la B_5, \nabla^2h\ra+b_6 
\end{split}
\eeq
and $J_2$ is of the form
\beq\label{J2}
J_2=\Delta f+\la A_1, \nabla^2f \ra +\la A_2, \nabla f \ra +\la B, (\nabla^2h\otimes\nabla f) \ra.
\eeq
Here and throughout the paper we denote by $A$ (possibly with a subscript) a smooth tensor-valued function depending on $(x,h, \nabla h)$ and vanishing at $(x,0,0)$, while 
$B$  (possibly with a subscript) stands for a smooth tensor-valued function depending on $(x,h, \nabla h)$. We replace capital letters $A$ and $B$ with $a$ and $b$, respectively, in case of scalar valued functions.

\subsection{Short time existence and uniqueness}

Let us fix an initial set $F_0 \in \mathfrak{h}_{K_0}^3(\Sigma)$ which is close to $G$. Finding a solution of \eqref{forced} for a short time  with intial set $F_0$ is equivalent to 
finding a solution $h$ of \eqref{forced2} with initial datum $h(\cdot, 0) = h_{F_0} =:h_0$. This is the goal of this section and the result  is stated in the following theorem.

\begin{theorem}
\label{thm with f}
Let $f:\Sigma\times [0,+\infty)\to \R$ be a smooth function. 
Given  $\de_0>0$ and $K_0>1$, there exist $\eps_0$,  $T_0\in (0,1)$  with the following property:  if  $F_0\in \mathfrak{h}_{K_0}^3(\Sigma)$, {defined in \eqref{mathfrakh}},   
\beq\label{innominata}
\sup_{0\leq t\leq T_0}\|f(\cdot,t)\|_{L^\infty(\Sigma)}+\int_0^{T_0}\|f(\cdot,t)\|_{H^3(\Sigma)}^2\,dt\leq K_0,
\eeq
and  $\|h_0\|_{L^2(\Sigma)} < \eps_0$, where $h_0:=h_{F_0}$,  then the equation \eqref{forced2} has a unique smooth solution $(F_t)$ in $C^\infty(0,T_0; C^\infty(\Sigma)) \cap H^1(0,T_0; H^1(\Sigma))$  and 
\beq\label{Pitt1}
\sup_{0\leq t\leq  T_0}\|h(\cdot, t)\|_{L^2(\Sigma)}\leq \de_0.
\eeq
Moreover, for every integer $k \geq 0$ there exist   constants $C_k, q_k>0$, {independent of $\de_0$ and $K_0$},  such that 
\beq
\begin{split}\label{newonw}
 \sup_{0\leq t \leq T} &t^k\|h(\cdot, t)\|_{H^{2k+3}(\Sigma)}^2 +  \int_0^{T}t^k\|h(\cdot, t)\|_{H^{2k+5}(\Sigma)}^2\,dt \\
& \leq  C_k\biggl( \|h_0\|_{H^3(\Sigma)}^2 + \int_0^{T} \big(1+ \|f\|_{L^\infty(\Sigma)}^{q_k} + \sum_{i=0}^k t^i \|f(\cdot, t)\|_{H^{2i+3}(\Sigma)}^2\big)\, dt\biggr),
\end{split}
\eeq
for every $T\leq T_0$.
\end{theorem}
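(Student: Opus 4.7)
The plan is to construct a unique smooth solution of the coordinate equation \eqref{forced2} starting from $h_0 = h_{F_0}$ by combining standard quasilinear parabolic theory for the fourth-order operator $-\Delta^2$ on $\Sigma$ with weighted-in-time energy estimates for higher regularity. Since the tensor $A(x,h,\nabla h)$ multiplying $\nabla^4 h$ in \eqref{forced2} is smooth and vanishes at $(x,0,0)$, there exists $\sigma>0$ such that $\|A(\cdot,h,\nabla h)\|_{L^\infty(\Sigma)}\leq 1/8$ whenever $\|h\|_{C^1(\Sigma)}\leq \sigma$. This smallness is secured by combining the hypothesis $\|h_0\|_{L^2(\Sigma)}<\eps_0$, the a priori assumption $\|h(\cdot,t)\|_{H^3(\Sigma)}\leq 2K_0$, and Lemma \ref{aubinlemma} in the form $\|h\|_{C^1(\Sigma)}\leq C\|h\|_{H^3(\Sigma)}^{\vartheta}\|h\|_{L^2(\Sigma)}^{1-\vartheta}$ (available through Morrey's embedding on the $2$-dimensional $\Sigma$). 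Approximate solutions are then produced by a Galerkin scheme based on the bi-Laplacian eigenfunctions of $(\Sigma,g)$; existence on a common interval $[0,T_0]$ rests on a closed a priori estimate, and uniqueness follows from a standard energy argument for the difference of two solutions.

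The core is an $H^3$ energy estimate obtained by differentiating $\|\nabla\Delta h\|_{L^2(\Sigma)}^2$ in time, which—after using \eqref{forced2} and integrating by parts—amounts to testing the equation against $\Delta^3 h$. The leading term yields a coercive contribution $\|\nabla\Delta^2 h\|_{L^2(\Sigma)}^2\geq \|h\|_{H^5(\Sigma)}^2-C\|h\|_{H^4(\Sigma)}^2$ by Lemma \ref{interchange 1}; the top-order perturbation $\int_\Sigma \la A,\nabla^4 h\ra\Delta^3 h\,d\Ha^2$ is bounded, after one integration by parts, by $\|A\|_{L^\infty}\|h\|_{H^5(\Sigma)}^2+(\text{l.o.t.})$ and absorbed thanks to the smallness of $\|A\|_{L^\infty}$. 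The nonlinear products in $J_1$—in particular the cubic $\la B_3,(\nabla^2 h)^{\otimes 3}\ra$—and the forcing term $J_2$ are controlled by Lemma \ref{aubinlemma} and the Sobolev embeddings on the $2$-manifold $\Sigma$ ($H^2\hookrightarrow L^\infty$ and $H^1\hookrightarrow L^p$ for all finite $p$), producing at worst $\eps\|h\|_{H^5(\Sigma)}^2+C_\eps(1+\|h\|_{H^3(\Sigma)}^2)^q(1+\|f\|_{H^3(\Sigma)}^2)$. The resulting Gr\"onwall-type inequality, combined with the hypothesis \eqref{innominata}, upgrades $\|h\|_{H^3}\leq 2K_0$ to $\|h\|_{H^3}\leq \tfrac{3}{2} K_0$ on a short $[0,T_0]$, closing the continuity argument. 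The bound \eqref{Pitt1} follows by testing \eqref{forced2} against $h$ itself: all nonlinear contributions are dominated by the (now bounded) $H^3$ norm and the forcing, so $\|h(\cdot,t)\|_{L^2}$ stays arbitrarily close to $\|h_0\|_{L^2}<\eps_0\leq \delta_0$ on $[0,T_0]$.

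The weighted estimates \eqref{newonw} are proved by induction on $k$. Assuming \eqref{newonw} up to level $k$, I compute $\tfrac{d}{dt}\bigl(t^{k+1}\|\nabla\Delta^{k+2}h\|_{L^2(\Sigma)}^2\bigr)$ using \eqref{forced2}, extract via Lemma \ref{interchange 1} a coercive term $t^{k+1}\|h\|_{H^{2k+7}(\Sigma)}^2$, absorb the top-order correction $\la A,\nabla^4 h\ra$ by the smallness of $\|A\|_{L^\infty}$, and estimate the lower-order pieces of $J_1$ via interpolation (Lemma \ref{aubinlemma} and Remark \ref{rm:aubinlemma}) so that only a small fraction of $\|h\|_{H^{2k+7}}^2$ appears and the remainder is a polynomial in lower Sobolev norms already controlled by the inductive hypothesis. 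The weight derivative yields $(k+1)t^k\|h\|_{H^{2k+5}(\Sigma)}^2$, which is precisely the quantity integrated on the left of \eqref{newonw} at level $k$. The forcing contributes $t^{k+1}\|f\|_{H^{2k+5}(\Sigma)}^2$ plus lower-order pieces, while an additional $\|f\|_{L^\infty}^{q_{k+1}}$ factor arises through H\"older and interpolation in the cross-terms $\la B,\nabla^2 h\otimes\nabla f\ra$ in $J_2$. Smoothness at positive times then follows from \eqref{newonw} for every $k$ together with Sobolev embedding.

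The main obstacle I expect is keeping the constants $C_k$ and exponents $q_k$ in \eqref{newonw} \emph{independent} of $K_0$ and $\delta_0$, which forces a careful decomposition of each nonlinear term into a small multiple of the highest derivative norm plus a polynomial in lower norms—most delicately for the cubic $\la B_3,(\nabla^2 h)^{\otimes 3}\ra$ in \eqref{J1}, whose control requires a precisely tuned Gagliardo--Nirenberg inequality on the $2$-dimensional manifold $\Sigma$. A secondary subtlety is making the continuity argument preserving $\|h\|_{C^1}\leq \sigma$ compatible with the short existence time $T_0$, which requires choosing $T_0$ small as a function of $K_0$ and shrinking $\eps_0$ accordingly.
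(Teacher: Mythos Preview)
Your approach via Galerkin approximation and a direct continuity/bootstrap argument is a legitimate alternative to the paper's route, which is a fixed-point argument: the paper defines $\mathscr{L}(h)=\tilde h$ as the solution of the \emph{linear} equation $\partial_t\tilde h=-\Delta^2\tilde h+J_h$ with the nonlinearity frozen at $h$, shows that $\mathscr{L}$ maps a class $\mathcal{S}$ (and a subclass $\mathcal{S}'$ encoding \eqref{newonw}) into itself, and proves it is a contraction in the weaker norm $L^\infty_tH^2_x\cap L^2_tH^4_x$ via Lemma~\ref{lemmaJ}. Both frameworks rest on the same analytical core, namely the nonlinear estimate of Proposition~\ref{prop:mostro},
\[
\int_\Sigma|\nabla^k J_h|^2\,d\Ha^2\leq\tfrac14\int_\Sigma|\nabla^{k+4}h|^2\,d\Ha^2+\tilde C_k\Bigl(1+\|f\|_{L^\infty}^{p_k}+\int_\Sigma|\nabla^{k+2}f|^2\,d\Ha^2\Bigr),
\]
with $\tilde C_k$ \emph{independent of any Sobolev bound on $h$}. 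Your sketch writes instead $\eps\|h\|_{H^5}^2+C_\eps(1+\|h\|_{H^3}^2)^q(1+\|f\|_{H^3}^2)$, which is enough for short-time existence but would make the constants $C_k$ in \eqref{newonw} depend on $K_0$; to obtain the $K_0$-independence you correctly flag as the main obstacle, the interpolation must be pushed so that every lower-order factor of $h$ is absorbed either into the top norm or into a quantity bounded by $1$ (in the paper: $\|h\|_{C^1}\leq\sigma_0$ and $\|\nabla^2 h\|_{L^4}\leq 1$), exactly as in the proof of Proposition~\ref{prop:mostro}. The fixed-point framework buys a somewhat cleaner derivation of \eqref{newonw}, since the estimates for $\tilde h$ are linear and one simply checks that $\mathcal{S}'$ is invariant; it also delivers uniqueness directly from the contraction. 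In your Galerkin route, uniqueness requires a separate difference estimate for two solutions, essentially the content of Lemma~\ref{lemmaJ}.
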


The proof of Theorem \ref{thm with f} is based on a fixed point argument in a carefully chosen function space and to this aim we need two lemmas. In the first one we estimate the derivatives of the nonlinear terms in \eqref{forced2}.

\begin{proposition}\label{prop:mostro}
Let $h$ and $f$ be of class $C^\infty(\Sigma)$. For every integer $k \geq1$ there exist $\tilde{C}_k>0$ and $p_k \geq 2$ such that given   $M_0>0$ there is  $\sigma_0 >0$ with the property that if  
\[
\|h\|_{H^3(\Sigma)}^2 \leq M_0  \qquad \text{and} \qquad \|h\|_{L^2(\Sigma)}\leq \sigma_0
\]
then
\begin{multline*}
\int_{\Sigma}|\nabla^k(\la A, \nabla^4h\ra)|^2+|\nabla^k J_1|^2+|\nabla^kJ_2|^2\, d\Ha^2\leq \frac14 \int_{\Sigma}|\nabla^{k+4}h|^2\, d\Ha^2 \\
+\tilde{C}_k\biggl( 1+  \|f\|_{L^{\infty}(\Sigma)}^{p_k}+\int_{\Sigma}|\nabla^{k+2} f|^2\, d\Ha^2\biggr),
\end{multline*}
where $A$, $J_1$, and $J_2$ are as in \eqref{forced2}, \eqref{J1}, and \eqref{J2}.
\end{proposition}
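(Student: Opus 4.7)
The plan is to expand $\nabla^k$ of each term via the Leibniz rule and Faà di Bruno's chain rule, then control each resulting product using H\"older, Gagliardo--Nirenberg interpolation (Lemma~\ref{aubinlemma} and Remark~\ref{rm:aubinlemma}) and Young's inequality. The essential preliminary is to convert $L^2$-smallness into $C^1$-smallness: applying Lemma~\ref{aubinlemma} in dimension $n-1=2$, one obtains $\theta\in(0,1)$ with
$$
\|h\|_{C^1(\Sigma)} \leq C\|h\|_{H^3(\Sigma)}^\theta\|h\|_{L^2(\Sigma)}^{1-\theta},
$$
so that for any prescribed $\delta>0$ we may choose $\sigma_0=\sigma_0(\delta,M_0)$ forcing $\|h\|_{C^1(\Sigma)}\leq\delta$. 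Since the tensors $A$ in \eqref{forced2} and $A_1$, $A_2$ in \eqref{J2} are smooth and vanish at $(x,0,0)$, this yields the pointwise bound $|A|+|A_1|+|A_2|\leq C\delta$.

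For $\nabla^k\langle A,\nabla^4 h\rangle$, Leibniz produces the top-order contribution $\langle A,\nabla^{k+4}h\rangle$ together with the mixed terms $\langle\nabla^j A,\nabla^{k-j+4}h\rangle$ for $1\leq j\leq k$. The top-order term obeys $|\langle A,\nabla^{k+4}h\rangle|^2\leq C\delta^2|\nabla^{k+4}h|^2$, absorbed into $\frac{1}{12}\int|\nabla^{k+4}h|^2$ by choosing $\delta$ small. For $j\geq 1$, Faà di Bruno writes $\nabla^j A$ as a sum of products $\prod_{s=1}^{L_0}\nabla^{p_s}h$ with $(h,\nabla h)$-dependent bounded coefficients, $p_s\geq 1$ and $\sum p_s\leq j+L_0$; hence each mixed term has the form $C\prod_s\nabla^{p_s}h\cdot\nabla^{k-j+4}h$ in which every single factor has derivative order at most $k+3$, strictly below $k+4$. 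The analogous strict-inequality property holds for $\nabla^kJ_1$, since by \eqref{J1} the maximum derivative of $h$ appearing in $J_1$ is $\nabla^3 h$.

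Each subcritical product $\prod_s\nabla^{q_s}h$ is then estimated by H\"older with exponents $r_s$ satisfying $\sum 1/r_s=1/2$, followed by the Gagliardo--Nirenberg interpolation
$$
\|\nabla^{q_s}h\|_{L^{r_s}(\Sigma)}\leq C\|\nabla^{k+4}h\|_{L^2(\Sigma)}^{\theta_s}\|\nabla^3 h\|_{L^2(\Sigma)}^{1-\theta_s},\qquad \theta_s=\frac{q_s-2-2/r_s}{k+1}.
$$
A direct combinatorial check on the weight constraints above shows $\sum_s\theta_s<1$ for every non-critical product, and since $\|\nabla^3 h\|_{L^2}\leq\sqrt{M_0}$, Young's inequality absorbs the resulting contributions into $\frac{1}{12}\int|\nabla^{k+4}h|^2$ plus a constant depending only on $M_0$; factors of order $\leq 2$ are handled via the $L^\infty$-smallness of $(h,\nabla h)$ and the 2D Sobolev embedding $H^3\hookrightarrow W^{2,p}$ for any $p<\infty$. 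The same scheme applies to $\nabla^kJ_2$: the contribution from $\Delta f$ provides exactly the $\|\nabla^{k+2}f\|_{L^2}^2$ on the right-hand side, $\nabla^k\langle A_i,\nabla^if\rangle$ combines the smallness of $A_i$ with the interpolation argument above, and the cross term $\nabla^k\langle B,\nabla^2h\otimes\nabla f\rangle$ is split via H\"older into $h$- and $f$-blocks, interpolating $h$ against $\|\nabla^{k+4}h\|_{L^2}$ and $f$ against $\|\nabla^{k+2}f\|_{L^2}$ and $\|f\|_{L^\infty}$ (using Lemma~\ref{aubinlemma} with $q=\infty$) and recombining through Young's to produce the $\|f\|_{L^\infty}^{p_k}$ term.

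The main obstacle is the purely combinatorial bookkeeping of the Leibniz and Faà di Bruno expansions: one must verify systematically the strict bound on every single-factor derivative order (at most $k+3$ in $h$, outside the harmless $\langle A,\nabla^{k+4}h\rangle$ term controlled by the smallness of $A$, and at most $k+1$ in $f$, outside the terms directly absorbed into the right-hand side) together with the summed exponent inequality $\sum_s\theta_s<1$. The three structural facts enabling this are the vanishing of $A$, $A_1$, $A_2$ at $(x,0,0)$, the bound $\leq 3$ on derivatives of $h$ appearing in $J_1$, and the bound $\leq 2$ on derivatives of $f$ appearing in $J_2$; once these are in hand, the remaining estimates reduce to routine applications of Lemma~\ref{aubinlemma} and Young's inequality.
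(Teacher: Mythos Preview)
Your strategy is the same as the paper's: Leibniz/Fa\`a di Bruno expansion, absorb the top-order piece $\langle A,\nabla^{k+4}h\rangle$ using $|A|\leq C\delta$, then control every remaining product by H\"older plus Gagliardo--Nirenberg with total exponent strictly below $1$, and finish with Young. The paper carries this out by setting $w=\nabla h$ and interpolating each factor between $\|w\|_{H^{k+3}}$ and either $\|w\|_{L^\infty}$ or $\|\nabla w\|_{L^4}$; your choice of endpoints $\|\nabla^{k+4}h\|_{L^2}$ and $\|\nabla^3 h\|_{L^2}$ is an equally valid variant and the combinatorial check $\sum_s\theta_s<1$ goes through.

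There is, however, one point where your endpoint choice costs you the statement as written. The proposition fixes $\tilde C_k$ \emph{before} $M_0$ is given, so $\tilde C_k$ must be independent of $M_0$. Your interpolation produces factors $\|\nabla^3 h\|_{L^2}^{1-\theta_s}\leq M_0^{(1-\theta_s)/2}$, and after Young's inequality the additive constant you obtain depends on $M_0$ (as you yourself note). The paper avoids this by pushing the $M_0$-dependence entirely into $\sigma_0$: once $\sigma_0$ is small enough one has $\|\nabla h\|_{L^\infty}\leq 1$ and $\|\nabla^2 h\|_{L^4}\leq 1$, and these are the low endpoints used in the interpolation, so the constants that emerge are universal. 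The fix is immediate---replace your low endpoint $\|\nabla^3 h\|_{L^2}$ by $\|\nabla h\|_{L^\infty}$ (handling the highest-order factor with $\|\nabla^2 h\|_{L^4}$ to keep the strict inequality on $\sum\theta_s$)---but as written your argument proves only the weaker version with $\tilde C_k=\tilde C_k(M_0)$.
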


\begin{proof}
Recall that $A(x, h, \nabla h)$ vanishes at $(x, 0, 0)$ and thus given $\e>0$  there exists $\de\in (0,1)$ such that if $\|h\|_{C^1(\Sigma)}\leq \de$,
then by Leibniz formula  
$$
|\nabla^k(\la A, \nabla^4h\ra)|^2\leq \e |\nabla^{k+4}h|^2+C\sum_{i=1}^k|\nabla^i (A(x, h, \nabla h)|^2|\nabla^{k+4-i}h|^2.
$$
On the other hand, the assumptions on $h$ together with standard interpolation imply that  $\|h\|_{C^1}\leq \de$ and $\|h\|_{W^{2,4}}\leq 1$   when $\sigma_0$ is chosen small (depending on $M_0$). It turns out to be  convenient to set  $w:=\nabla h$. Since $\|w\|_\infty\leq \de <1$, one may check that 
\begin{align*}
\sum_{i=1}^k|\nabla^i (A(x, h, \nabla h)|^2|\nabla^{k+4-i}h|^2 &\leq C\sum_{i=1}^k|\nabla^{k+3-i} w|^2\\
&\quad+C\sum_{i=1}^k \sum_{\substack{1\leq j_1\leq \dots\leq j_{m-1}\leq i  \\ j_1+\dots+j_{m-1}\leq i \\ m\geq 2}}|\nabla^{j_1}w|^2\cdots |\nabla^{j_{m-1}}w|^2 |\nabla^{k+3-i}w|^2\\
&\leq C\sum_{i=1}^k|\nabla^{k+3-i} w|^2+C\sum_{\substack{1\leq j_1\leq \dots\leq j_{m}\leq k+2  \\ j_1+\dots+j_{m}\leq k+3 \\ m\geq 2}}|\nabla^{j_1}w|^2\cdots |\nabla^{j_{m}}w|^2. 
\end{align*}
{Then by H\"older's inequality we obtain
\[
\begin{split}
\int_{\Sigma}|\nabla^k(\la A, \nabla^4h\ra)|^2\, d\Ha^2\leq &\int_\Sigma \big( \varepsilon|\nabla^{k+3} w|^2 + C \sum_{i=1}^k |\nabla^{k+3-i} w|^2\big)\, d\Ha^2 \\
&+ C \sum_{\substack{1\leq j_1\leq \dots\leq j_{m} \leq k+2 \\ j_1+\dots+j_{m}\leq k+3 \\ m\geq 2}} \|\nabla ^{j_1} w\|_{\frac{2(k+3)}{j_l}}^{2} \cdots \|\nabla ^{j_m} w\|_{\frac{2(k+3)}{j_m}}^{2} .
\end{split}
\]
}Observe  that  for every $l=1,\dots, m-1$,    it holds by the interpolation  Lemma~\ref{aubinlemma}
$$
\|\nabla^{j_l}w\|_{\frac{2(k+3)}{j_l}}\leq C\| w\|_{H^{k+3}}^{\theta_l}\|  w\|_{\infty}^{1-\theta_l},
$$
where  $\theta_l=\frac{j_l}{k+3}$.  
To treat the last derivative we use a different interpolation:
$$
\|\nabla^{j_m}w\|_{\frac{2(k+3)}{j_m}}\leq C\|w\|_{H^{k+3}}^{\theta_m}\| \nabla  w\|_{4}^{1-\theta_m},
$$
where $\theta_m=  \frac{2j_m(k+2)}{(2k+3)(k+3)}   - \frac{1}{2k+3} < \frac{j_m}{k+3}$ (recall that $3 \leq j_m < k+3$). 
Therefore, recalling that $\|w\|_\infty$, $\|\nabla w\|_4\leq 1$, we get 
\[
\begin{split}
\int_{\Sigma}|\nabla^k(\la A, \nabla^4h\ra)|^2\, d\Ha^2\leq &\int_\Sigma \big( \varepsilon|\nabla^{k+4} h|^2 + C \sum_{i=1}^k |\nabla^{k+4-i} h|^2 \big)\, d\Ha^2 \\ 
&+
 C \sum_{\substack{1\leq j_1\leq \dots\leq j_{m} \leq k+2 \\ j_1+\dots+j_{m}\leq k+3 \\ m\geq 2}}
\ \prod_{l=1}^{m} \|w\|_{H^{k+3}}^{2\theta_l} .
\end{split}
\]
Observe that  for every choice of $j_1, \dots, j_{m}$ the sum of the corresponding $\theta_l$ satisfies
$$
\sum_{l=1}^{m} \theta_l <\sum_{l=1}^{m}\frac{ j_l}{k+3}\leq 1.
$$
Therefore by Young's inequality, by Remark~\ref{rm:aubinlemma}, and recalling that $\|  w\|_{\infty} \leq 1$, we conclude from the above inequality  that 
\beq\label{stima1}
\int_{\Sigma}|\nabla^k(\la A, \nabla^4h\ra)|^2\, d\Ha^2\leq \frac{1}{20} \int_\Sigma |\nabla^{k+4}h|^2\, d\Ha^2+ \tilde{C}_k .
\eeq
Using again $\|w\|_{\infty}\leq 1$, we have that  
\[
|\nabla^kJ_1| \leq  C\sum_{i=1}^k|\nabla^{k+3-i} w|+C\sum_{\substack{1\leq j_1\leq \dots\leq j_m\leq 2+k \\ j_1+\dots+j_m\leq 3+k \\ m\geq 2}}|\nabla^{j_1}w|\dots |\nabla^{j_m}w|.
\]
Therefore, arguing exactly as above, we have 
\beq\label{stima2}
\int_{\Sigma}|\nabla^kJ_1|^2\, d\Ha^2\leq \frac{1}{20} \int_\Sigma |\nabla^{k+4}h|^2\, d\Ha^2+ \tilde{C}_k .
\eeq
In order to control the derivatives of $J_2$ we need a slightly different argument, because we need to separate the terms involving $f$ and $h$ from each other. We recall \eqref{J2} and begin by estimating 
$$
|\nabla^k(\Delta f+\la A_1, \nabla^2f\ra)|\leq C\sum_{l=0}^k|\nabla^{l+2}f|+C\sum_{i=1}^k\sum_{\substack{1\leq j_1\leq \dots\leq j_m\leq i \\ j_1+\dots+j_m\leq i \\ m\geq 1}}|\nabla^{j_1}w|\dots |\nabla^{j_m}w||\nabla^{k+2 -i}f|.
$$
Therefore, using interpolation as above
\begin{align*}
\int_{\Sigma}|\nabla^k(\Delta f+ &\la A_1, \nabla^2f\ra)|^2\, d\Ha^2\leq C\Bigl(\|f\|_\infty^{2}+\|\nabla^{k+2}f \|_2^2\Bigr) \\
&\quad\qquad\qquad\qquad\qquad+ C\sum_{i=1}^k\sum_{\substack{1\leq j_1\leq \dots\leq j_m\leq i \\ j_1+\dots+j_m\leq i \\ m\geq 1}}
\prod_{l=1}^m\|\nabla^{j_l}w\|_{\frac{2(2+k)}{j_l}}^{2} \|\nabla^{k+2 -i}f\|_{\frac{2(2+k)}{2+k-i}}^{2}
 \\
 & \leq C\Bigl(\|f\|_\infty^{2}+\|\nabla^{k+2}f \|_2^2\Bigr) \\
&\quad+ C\sum_{i=1}^k\sum_{\substack{1\leq j_1\leq \dots\leq j_m\leq i \\ j_1+\dots+j_m\leq i \\ m\geq 1}}
\prod_{l=1}^m\|w\|_{H^{k+3}}^{2\theta(j_l)} \| w\|_{\infty}^{2(1-\theta(j_l))}\|\nabla^{2+k}f\|_2^{\frac{2(k+2-i)}{k+2}}\|f\|_\infty^{\frac{2i}{k+2}},
\end{align*}
where $\theta(j_l):=\frac{j_l(k+1)}{(k+2)^2}$. 
Observe that since $j_1+\dots+j_m\leq i$  
$$
\sum_{l=1}^{m}\Big(2\theta(j_l)+2\frac{(2+k-i)}{k+2}\Big)\leq \frac{2[(2+k)^2-i]}{(2+k)^2}<2.
$$
Therefore, using Young's inequality, we may conclude that 
\beq\label{stima3}
\int_{\Sigma}|\nabla^k(\Delta f+\la A_1, \nabla^2f\ra)|^2\, d\Ha^2\leq \frac{1}{20}\|\nabla^{k+4}h\|_2^2 + \tilde{C}_k \Bigl(1+ \|f\|_\infty^{p_k}+\|\nabla^{k+2}f \|_2^2\Bigr).
\eeq 
A similar argument, whose details are left to the reader, shows that  
$$
\int_{\Sigma}|\nabla^k\la A_2, \nabla f\ra+\la B, (\nabla^2h\otimes \nabla f)\ra|^2\, d\Ha^2\leq  \frac{1}{20} \|\nabla^{k+4}h\|_2^2+  \tilde{C}_k \Bigl(1+ \|f\|_\infty^{p_k}+\|\nabla^{k+2}f \|_2^2\Bigr).
$$
The conclusion then follows by combining this inequality with \eqref{stima1}, \eqref{stima2}, and \eqref{stima3}.
\end{proof}

In the  second lemma we ``linearize'' the terms $J_1$ and $J_2$ in the equation \eqref{forced2}. The argument  is similar to the previous one and therefore we postpone its proof until  the Appendix.

\begin{lemma}\label{lemmaJ}
Let $T\in (0,1)$ and  let  $h_1, h_2, f : \Sigma \times (0,T) \to \R$  be smooth functions  such that 
$$
\sup_{0\leq t\leq T}\|h_i(\cdot, t)\|_{H^3(\Sigma)}^2 +  \int_0^T\int_{\Sigma}|\nabla^5 h_i|^2\, d\Ha^2dt\leq M_0, 
$$
and
$$
\sup_{0\leq t\leq T}\|f(\cdot, t)\|_{L^\infty(\Sigma)}+\int_0^T\int_{\Sigma}|\nabla^3f|^2\, d\Ha^2dt\leq K_0.
$$
Then, there exists $\theta\in (0,1)$   with the following property:  for any $\e>0$ there exist $C=C(\e, K_0, M_0)>0$ and $\de=\de(\e, M_0)>0$ such that
if\, $\sup_{0\leq t\leq T}\|h_i(\cdot, t)\|_{L^2(\Sigma)}\leq \de$, $i=1,2$, then 
$$
\int_0^T\int_{\Sigma}|J_{h_2}-J_{h_1}|^2\, d\Ha^2dt\leq \e\int_0^T\int_{\Sigma}|\nabla^4h_2-\nabla^4h_1|^2\, d\Ha^2dt+CT^\theta
\sup_{0\leq t\leq T}\|h_2(\cdot, t)-h_1(\cdot, t)\|^2_{H^2(\Sigma)},
$$
where $J_h$ is defined as in \eqref{remainder}.
\end{lemma}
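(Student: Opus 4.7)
My plan is to exploit the structure of $J_h$ displayed in \eqref{forced2}, \eqref{J1}, \eqref{J2}: modulo the leading term $\la A(\cdot,h,\nabla h),\nabla^4 h\ra$, the remainder $J_h$ is a sum of finitely many products of the form $\la B(\cdot,h,\nabla h),T(\nabla^{l_1}h,\dots,\nabla^{l_s}h,\nabla^{k_1}f,\nabla^{k_2}f)\ra$, where $B$ is smooth, $l_j\leq 3$ (for the $J_1$-part) or $l_j\leq 2$ (for the $J_2$-part) and $k_j\leq 2$. For each such product one writes
\[
\la B(h_2),T(h_2)\ra-\la B(h_1),T(h_1)\ra=\la B(h_2)-B(h_1),T(h_1)\ra+\la B(h_2),T(h_2)-T(h_1)\ra,
\]
and in the second term one telescopes further to reduce to a sum of integrals in which exactly one factor is a difference $\nabla^l(h_2-h_1)$, with $l\leq 4$, the remaining factors being derivatives of either $h_1$ or $h_2$ (and of $f$). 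The coefficient difference $B(h_2)-B(h_1)$ is bounded pointwise by $C\|h_2-h_1\|_{C^1(\Sigma)}$, which by Morrey and the interpolation $\|h_2-h_1\|_{H^{2+\alpha}}\leq C\|h_2-h_1\|_{H^3}^\alpha\|h_2-h_1\|_{H^2}^{1-\alpha}$ is controlled by $CM_0^{\alpha/2}\sup_t\|h_2-h_1\|_{H^2}^{1-\alpha}$. Combining with the further interpolation $\|h_2-h_1\|_{H^2}\leq CM_0^{2/3}\de^{1/3}$ (coming from $\sup_t\|h_i\|_{L^2}\leq\de$ and the $H^3$-bound), one converts this factor into $\sup_t\|h_2-h_1\|_{H^2}^2$ times a constant depending only on $M_0$ and $\de$.

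For the leading term $\la A,\nabla^4 h\ra$ the contribution $\la A(h_2),\nabla^4(h_2-h_1)\ra$ is controlled in $L^2_{t,x}$ by $\|A(h_2)\|_{L^\infty(\Sigma\times (0,T))}^2\int_0^T\|\nabla^4(h_2-h_1)\|_{L^2}^2\,dt$; since $A$ vanishes at $(x,0,0)$ and $\|h_2\|_{C^1}$ can be made arbitrarily small by choosing $\de$ small (using Morrey plus the interpolation $\|h_2\|_{C^1}\leq C\|h_2\|_{H^3}^{2/3}\|h_2\|_{L^2}^{1/3}$), we recover the $\e$-contribution. The complementary term $\la A(h_2)-A(h_1),\nabla^4 h_1\ra$ is estimated by the $C^1$-difference bound above times $\int_0^T\|\nabla^4 h_1\|_{L^2}^2\,dt$; interpolating $\|h_1\|_{H^4}^2\leq C\|h_1\|_{H^3}\|h_1\|_{H^5}$ and applying Cauchy--Schwarz in time yields $\int_0^T\|\nabla^4 h_1\|_{L^2}^2\,dt\leq CM_0^{3/2}T^{1/2}$, producing a $T^\theta$-factor with $\theta=1/2$. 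The $J_1$- and $J_2$-terms are handled analogously: each telescoped summand is an integral of the form $\int_0^T\!\!\int_\Sigma |\nabla^{j_1}h|^2\cdots|\nabla^{j_{s-1}}h|^2\,|\nabla^{j_s}(h_2-h_1)|^2\,d\Ha^2\,dt$ (possibly with $f$-factors), estimated by H\"older in space together with Lemma~\ref{aubinlemma} and Remark~\ref{rm:aubinlemma}, then H\"older in time using $\int_0^T\|h_i\|_{H^5}^2\,dt\leq M_0$ and $\int_0^T\|f\|_{H^3}^2\,dt\leq K_0$.

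The main technical obstacle is the bookkeeping: in each telescoped summand, the exponents in Lemma~\ref{aubinlemma} must be chosen so that (i) the combined power of the top norm $\|h_i\|_{H^5}$ in the $L^2_t$ scale is strictly less than one (so that H\"older in time produces a genuinely positive power of $T$), and (ii) the difference factor $\nabla^{j_s}(h_2-h_1)$ is absorbed either into $\e\int_0^T\|\nabla^4(h_2-h_1)\|_{L^2}^2\,dt$ (when $j_s=4$) or into $\sup_t\|h_2-h_1\|_{H^2}^2$ (when $j_s\leq 3$), via interpolation and Young's inequality. The vanishing of $A$ at $(x,0,0)$ is essential only for the $j_s=4$ term; every other contribution carries a common $T^\theta$ with some fixed $\theta\in(0,1)$ determined by the interpolation exponents.
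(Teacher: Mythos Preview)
Your overall strategy---telescope $J_{h_2}-J_{h_1}$ term by term, place the resulting products in suitable $L^p$-spaces via Lemma~\ref{aubinlemma}, and extract a positive power of $T$ by H\"older in time against the $L^2_tH^5_x$-bound on $h_i$---is exactly the paper's approach. The treatment of the leading piece $\la A(h_2),\nabla^4(h_2-h_1)\ra$ via the smallness of $\|h_2\|_{C^1}$ (and hence of $A(\cdot,h_2,\nabla h_2)$) is also correct.

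There is, however, a genuine gap in your handling of the coefficient differences $B(h_2)-B(h_1)$ (and likewise $A(h_2)-A(h_1)$). You bound these pointwise by $C\|h_2-h_1\|_{C^1(\Sigma)}$ and then claim that, since $\|h_2-h_1\|_{C^1}\leq C\|h_2-h_1\|_{H^{2+\alpha}}\leq CM_0^{\alpha/2}\|h_2-h_1\|_{H^2}^{1-\alpha}$, this can be ``converted'' into $\sup_t\|h_2-h_1\|_{H^2}^2$ times a constant depending on $M_0,\de$. This conversion is false: after squaring you obtain a bound by $C\|h_2-h_1\|_{H^2}^{2(1-\alpha)}$, and for small $\|h_2-h_1\|_{H^2}$ this quantity \emph{dominates} $\|h_2-h_1\|_{H^2}^2$ rather than being controlled by it. No constant depending only on $M_0,\de$ can reverse this inequality, and the a~priori bound $\|h_2-h_1\|_{H^2}\leq C(M_0)\de^{1/3}$ you invoke goes in the wrong direction.

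The fix, which is what the paper does, is to avoid the $C^1$-norm altogether and use H\"older's inequality in space \emph{before} extracting the difference. For instance, for $\int_\Sigma|A(h_2)-A(h_1)|^2|\nabla^4 h_1|^2$ one writes
\[
\int_\Sigma\big(|h_2-h_1|^2+|\nabla(h_2-h_1)|^2\big)|\nabla^4 h_1|^2\,d\Ha^2\leq \|h_2-h_1\|_{W^{1,4}}^2\,\|\nabla^4 h_1\|_{L^4}^2\,;
\]
since $\Sigma$ is two-dimensional, the Sobolev embedding $H^2(\Sigma)\hookrightarrow W^{1,p}(\Sigma)$ holds for every $p<\infty$, giving $\|h_2-h_1\|_{W^{1,4}}\leq C\|h_2-h_1\|_{H^2}$ with the exact exponent required by the statement. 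The remaining factor $\|\nabla^4 h_1\|_{L^4}^2$ is then interpolated between $\|h_1\|_{H^5}$ and $\|\nabla h_1\|_{L^\infty}$, and H\"older in time against $\int_0^T\|h_1\|_{H^5}^2\,dt\leq M_0$ yields the $T^\theta$. The same device (with $W^{1,p}$ for a suitable finite $p$) handles the coefficient-difference contributions in $J_1$ and $J_2$; going through $C^1$ is one derivative too many in dimension two.
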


\medskip

\begin{proof}[\textbf{Proof of Theorem \ref{thm with f}}] Given $K_0$, let us define the set  $\mathcal{S}$  of functions in 
$C^\infty(0,T_0; C^\infty(\Sigma)) \cap H^1(0,T_0; H^1(\Sigma))$, which satisfy 
\beq \label{hirvio}
\sup_{0 \leq t \leq T_0} \|h(\cdot,t)\|_{L^2(\Sigma)} \leq \sigma_0 \quad \text{and} \quad  \sup_{0 \leq t \leq T_0} \|h(\cdot,t)\|_{H^3(\Sigma)}^2 +   \int_0^{T_0} \|h(\cdot,t)\|_{H^5(\Sigma)}^2 \, dt \leq M_0,
\eeq
where the constants $M_0$ and $\sigma_0$ will be chosen later. We also define a subclass $\mathcal{S}' \subset \mathcal{S}$ of functions  which satisfy the additional requirement \eqref{newonw}, where the constants  $C_k$ and $q_k$ will again be chosen later. The goal is to obtain a solution of   \eqref{forced2} in $\mathcal{S}'$ which is unique in $\mathcal{S}$.  

We begin by assuming that $h_0$ is smooth with   $\| h_0\|_{H^{3}(\Sigma)} < K_0$ and $2\|h_0\|_{L^2(\Sigma)} \leq\sigma_0$.  We now define a map $\mathscr{L}:  
\mathcal{S}\to  C^\infty(0,T_0; C^\infty(\Sigma))$ by setting $\mathscr{L}(h):=\tilde h$, where $\tilde h : \Sigma \times [0,\infty) \to \R$ 
is the solution of 
\beq \label{flow linear}
\begin{cases}
\displaystyle\frac{\pa \tilde{h}}{\pa t} =  -\Delta^2 \tilde{h} +J_h(x,t) \, \vspace{5pt}\\
\tilde h(\cdot, 0)=h_0
\end{cases}
\eeq
and where  we have set 
\beq \label{remainder}
J_h(x,t):= \la A(x,h, \nabla h),\nabla^4h \ra +J_1(x,h, \nabla h, \nabla^2h, \nabla^3h)+J_2(x,h, \nabla h, \nabla^2h, \nabla f,\nabla^2f)
\eeq
with $A$, $J_1$, $J_2$  as in \eqref{forced2}.

We note that the set $\mathcal{S}'$ is nonempty when the constants $C _k$ are chosen properly. To see this consider the solution  $\bar{h}$ of   
\beq\label{accabar}
\begin{cases}
\displaystyle\frac{\pa \bar{h}}{\pa t} =  -\Delta^2 \bar{h} \, \vspace{5pt} \\
\bar h(\cdot, 0)=h_0.
\end{cases}
\eeq
By classical regularity estimates $\bar h$ is smooth and satisfies $\sup_{0\leq t \leq 1} \|\bar h(\cdot, t)\|_{L^{2}(\Sigma)} \leq  \|h_0\|_{L^{2}(\Sigma)}$ and
\[
 \sup_{0\leq t \leq 1} t^k\|h(\cdot, t)\|_{H^{2k+3}(\Sigma)}^2 +  \int_0^{1}t^k\|h(\cdot, t)\|_{H^{2k+5}(\Sigma)}^2\,dt \leq C_k'\, \|h_0\|_{H^3(\Sigma)}^2 
\]
for all integers $k\geq 0$, and therefore $\bar h \in \mathcal{S}'$ provided that we choose $M_0$ sufficiently large. We remark that in  Steps~1 and 2 below we give an argument which can be applied to prove the above estimate.

\medskip

\textbf{Step 1:} In this step we prove that if $h \in \mathcal{S}$ then  $\tilde h = \mathscr{L}(h) \in \mathcal{S}$  for a suitable choice  of  $M_0$, $\sigma_0$ and $T_0$.

To prove this we multiply \eqref{flow linear} by $\Delta^3 \tilde{h}$. Integrating by parts both sides we get 
\begin{align*}
\frac{\pa}{\pa t} \frac12  \int_\Sigma |\nabla(\Delta\tilde{h})|^2 \, d \Ha^2=-\int_\Sigma \frac{\pa \tilde{h}}{\pa t} \Delta^3 \tilde{h} \, d \Ha^2&=
 \int_\Sigma (\Delta^2 \tilde{h}-J_h) \Delta^3 \tilde{h} \, d \Ha^2\\
 &= \int_\Sigma \big(-|\nabla(\Delta^2 \tilde{h})|^2+ \langle \nabla J_h, \nabla(\Delta^2 \tilde{h})\rangle\big) \, d \Ha^2.
\end{align*}
By Proposition~\ref{prop:mostro} it follows that if $\sigma_0$ is sufficiently   small  then
\[
\begin{split}
\frac{\pa}{\pa t} \frac12  \int_\Sigma |\nabla(\Delta\tilde{h})|^2 \, d \Ha^2 &\leq  -\frac12 \int_\Sigma   |\nabla(\Delta^2 \tilde{h})|^2   \, d \Ha^2+\frac12\int_\Sigma|\nabla J_h|^2\, d\Ha^2\\
&\leq -\frac12 \int_\Sigma   |\nabla(\Delta^2 \tilde{h})|^2   \, d \Ha^2 +   \frac38 \int_\Sigma   |\nabla^5 h|^2    \, d \Ha^2 \\
&\quad+\frac32\tilde C_1\biggl(1  +\|f\|_{L^{\infty}(\Sigma)}^{q_0}+\int_{\Sigma}|\nabla^{3} f|^2\, d\Ha^2\biggr) ,
\end{split}
\]
where $q_0 = p_{1}$ and $\tilde C_1$ are  from the Proposition~\ref{prop:mostro}.
 Integrate this over $(0,t)$ with $t \leq T_0$, where  $T_0$ will be chosen later, and get
\beq \label{crucial estimate}
\begin{split}
 \int_\Sigma &|\nabla(\Delta \tilde{h}(\cdot,t))|^2 \, d \Ha^2   -\int_\Sigma |\nabla(\Delta h_0)|^2 \, d \Ha^2 + \int_0^t \int_\Sigma   |\nabla(\Delta^2 \tilde{h})|^2 \,d\Ha^2 ds \\
 &\leq   \frac34 \int_0^{T_0} \int_\Sigma   |\nabla^5   h(\cdot, t)|^2    \, d \Ha^2    \, d t +  3 \tilde C_1 \int_0^{T_0}  \biggl(1+    \|f(\cdot, t)\|_{L^{\infty}(\Sigma)}^{q_0}+ \int_\Sigma  |\nabla^3 f(\cdot, t)|^2\, d \Ha^2\biggr) dt  . 
\end{split}
\eeq
From this estimate, from the fact that $h$ satisfies \eqref{hirvio},  $f$ satisfies \eqref{innominata}, $\| h_0\|_{H^3(\Sigma)} < K_0$ and  using Remark~\ref{rem aub 2} (with a sufficiently small $\e$) we obtain
\begin{multline}\label{stima4}
\sup_{0\leq t\leq T_0}\|\tilde h(\cdot, t)\|_{H^3}+\int_0^{T_0} \|\tilde h(\cdot, t)\|_{H^5}^2\, dt\\ \leq C\sup_{0\leq t\leq T_0}\|\tilde h(\cdot, t)\|_{L^2}+   K_0^2 +  \frac{4}{5}M_0 + 4 \tilde C_1 (( T_0+ T_0 K_0^{q_0} ) + K_0) .
\end{multline}
In order to estimate the $L^2$-norm of $\tilde h$,  we  multiply the equation \eqref{flow linear} by $\tilde{h}$. Recalling \eqref{remainder} and  using the $H^3$-bound on $h$ and the interpolation  Lemma~\ref{aubinlemma} we get  
\beq \label{for step 3}
\begin{split}
&\int_{\Sigma} \frac{\partial\tilde{h}}{\pa t} \tilde{h} \, d \Ha^2 =- \int_{\Sigma} \Delta^2 \tilde{h}\,  \tilde{h}    \, d \Ha^2  +  \int_{\Sigma} J_h   \tilde{h}      \, d \Ha^2 \\
&\leq  \int_{\Sigma} (  - |\Delta \tilde{h}|^2  + C_{\eta} \tilde{h}^2 )    \, d \Ha^2  +  \eta \int_{\Sigma} J_h^2   \, d \Ha^2 \\
&\leq C_{\eta} \int_{\Sigma} \tilde{h}^2    \, d \Ha^2  \\
&\quad+  C \eta\int_{\Sigma} \left(1+ (1+|\nabla^2 h|^2) |\nabla^3h|^2+  |\nabla^2 h|^6 +
(1+ |\nabla^2h|^2)(|\nabla f|^2+ |\nabla^2 f|^2 )\right)   \, d \Ha^2 \\
&\leq C_{\eta} \int_{\Sigma} \tilde{h}^2    \, d \Ha^2  +  C\eta\biggl(1+ \|f\|_{L^\infty}^2+ \int_{\Sigma}(|\nabla^5 h|^2+  |\nabla^3 f|^2)\, d \Ha^2\biggr),
\end{split}
\eeq
for some  $C>0$ depending  on $M_0$ and  $K_0$. Integrating this over $(0,t)$  and  using the fact that $h$ satisfies \eqref{hirvio}  and $f$  satisfies \eqref{innominata}  yield 
\[
\begin{split}
 \frac12  \int_\Sigma  \tilde{h}(\cdot,t)^2 \, d \Ha^2-  \frac12  \int_\Sigma  h_0^2 \, d \Ha^2 &\leq C_{\eta} T_0 \sup_{0\leq t \leq T_0}\|\tilde{h}(\cdot, t)\|_{L^2(\Sigma)}^2 \\
&+  \tilde C \eta\biggl(T_0 + T_0 K_0^2 + M_0 + K_0 \biggr).
\end{split}
\]
Hence, recalling that $\|h_0\|_{L^2(\Sigma)}\leq \frac{\sigma_0}{2}$  we have
$$
\sup_{0 \leq t \leq T} \|\tilde{h}(\cdot,t)\|_{L^{2}}^2 \leq \frac{\sigma_0^2}4 + 2C_{\eta} T_0\sup_{0 \leq t \leq T_0} \|\tilde{h}(\cdot,t)\|_{L^{2}}^2+ 2 \tilde C \eta\biggl(T_0 + T_0 K_0^2 + M_0 + K_0 \biggr) .
$$
From this inequality, choosing $\eta$ and $T_0$ sufficiently small (depending on $M_0$ and $K_0$)  we conclude that 
$$
\sup_{0 \leq t \leq T_0} \|\tilde{h}(\cdot,t)\|_{L^{2}(\Sigma)}\leq \sigma_0.
$$
In turn, since $\sigma_0\leq 1$, we may choose $M_0$  sufficiently large (depending on $K_0$) and $T_0$ smaller if needed to  deduce that  
from \eqref{stima4}  that 
$$
\sup_{0\leq t\leq T_0}\|\tilde h(\cdot, t)\|_{H^3(\Sigma)}+ \int_0^{T_0} \|\tilde h(\cdot, t)\|_{H^5(\Sigma)}^2\, dt\leq M_0.
$$
This concludes the proof of the fact that $\tilde h = \mathscr{L}(h)$ satisfies \eqref{hirvio} and thus belongs to $\mathcal{S}$.

\medskip

\textbf{Step 2:} Let us now prove that if $h \in \mathcal{S}'$ then  $\tilde h = \mathscr{L}(h) \in \mathcal{S}'$, i.e., it satisfies \eqref{newonw}  with $h$ replaced by $\tilde h$. We begin by observing that  the case $k= 0$ can be proven by a similar argument as the one used in Step 1, by combining \eqref{crucial estimate},  \eqref{for step 3} and replacing $T_0$ by $T \leq T_0$. We proceed by induction and assume that  \eqref{newonw}  holds for $k-1$ and prove it for $k$.  We argue similarly as in the previous step and multiply the equation \eqref{flow linear} by $\Delta^{2k +3} \tilde h$, and  after integrating by parts the left-hand side $(2k+3)$-times and the right-hand side $(2k+1)$-times and using Proposition~\ref{prop:mostro} with $k$ replaced by $2k +1$   we get
\[
\begin{split}
\frac{\pa}{\pa t} \frac12  \int_\Sigma |\nabla (\Delta^{k+1} \tilde{h})|^2 \, d \Ha^2 &\leq  -\frac12 \int_\Sigma   |\nabla (\Delta^{k+2} \tilde{h})|^2   \, d \Ha^2 + \frac12 \int_\Sigma  |\nabla^{2k+1} J_h |^2     \, d \Ha^2 \\
&\leq     -\frac12 \int_\Sigma   |\nabla (\Delta^{k+2} \tilde{h})|^2   \, d \Ha^2  +   \frac{3}{8} \int_\Sigma   |\nabla^{2k +5} h|^2    \, d \Ha^2 \\
&\quad+\frac32\tilde{C}_{2k+1}\biggl( 1+\|f (\cdot, t)\|_{L^{\infty}}^{p_{2k +1}} +\int_{\Sigma}|\nabla^{2k +3} f|^2\, d\Ha^2\biggr).
\end{split}
\]
From this estimate we obtain   
\[
\begin{split}
\frac{\pa}{\pa t}\biggl( t^k   \int_\Sigma &|\nabla(\Delta^{k+1} \tilde{h})|^2 \, d \Ha^2 \biggr)  \leq  k\, t^{k-1}   \int_\Sigma |\nabla(\Delta^{k+1} \tilde{h})|^2 \, d \Ha^2 -t^k \int_\Sigma   |\nabla(\Delta^{k+2} \tilde{h})|^2   \, d \Ha^2\\
 &+   \frac{3}{4}t^k  \int_\Sigma   |\nabla^{2k +5} h|^2      \, d \Ha^2 +  3\tilde{C}_{2k+1} t^k \biggl( 1+\|f (\cdot, t)\|_{L^{\infty}}^{p_{2k +1}} +\int_{\Sigma}|\nabla^{2k +3} f|^2\, d\Ha^2\biggr).
\end{split}
\]
Integrating this inequality over $(0, t)$ for $t\leq T$  yields
\[
\begin{split}
&\sup_{0\leq t \leq T} t^k \int_\Sigma |\nabla(\Delta^{k+1} \tilde{h})|^2 \, d \Ha^2 +  \int_0^{T}t^{k} \int_\Sigma   |\nabla(\Delta^{k+2} \tilde{h})|^2   \, d \Ha^2 dt \\
  &\leq k \int_0^{T} t^{k-1} \int_\Sigma |\nabla^{2k+3} \tilde{h}|^2\, d\Ha^2 dt \\
&+   \int_0^{T}  t^k \biggl(\frac34   \int_\Sigma   |\nabla^{2k +5} h|^2    \, d \Ha^2 + 3\tilde{C}_{2k+1}  (1+\|f(\cdot, t)\|_{L^{\infty}}^{p_{2k +1}} +\int_{\Sigma}|\nabla^{2k +3} f|^2\, d\Ha^2)\biggr)dt.
\end{split}
\]
By using the fact that $\tilde h$ satisfies \eqref{newonw} with $k-1$, and $h$ satisfies \eqref{newonw} we deduce
\[
\begin{split}
&\sup_{0\leq t \leq T} t^k \|\nabla(\Delta^{k+1} \tilde{h})\|_{L^2(\Sigma)}^2  +\int_0^{T}t^k \int_\Sigma   |\nabla(\Delta^{k+2} \tilde{h})|^2   \, d \Ha^2 dt \\
&\leq  (kC_{k-1} + \frac{3}{4}C_k +3\tilde{C}_{2k+1}) \biggr(    \int_0^{T_0} (\|h_0\|^2_{H^3(\Sigma)}+ 3+3 \|f(\cdot, t)\|_{L^{\infty}(\Sigma)}^{q_k } + \sum_{i=0}^k t^i   \| f(\cdot, t)\|_{H^{2i +3 }}^2 ) \, dt\biggr)
\end{split}
\]
when we choose $q_k \geq \max\{q_{k-1}, p_{2k+1} \}$. Using the fact that $\sup_{0\leq t \leq T_0} \| \tilde{h}(\cdot, t)\|_{L^2}  \leq \sigma_0$ and by  Remark~\ref{rem aub 2}, we obtain the estimate \eqref{newonw} for $\tilde h$ by choosing  $C_k$ large enough.

\medskip

\textbf{Step 3:} In this step we prove that the map $\mathscr{L}$ introduced in the previous step is a contraction with respect to a suitable norm, provided that $\sigma_0$ and $T_0$ are chosen sufficiently small.

To this aim,  let $h_1, h_2 \in \mathcal{S}$ and let $\tilde{h}_1, \tilde{h}_2 \in \mathcal{S}$  be the corresponding solutions of \eqref{flow linear}.
   Multiplying the equation satisfied by $\tilde h_i$ by $\Delta^2 (\tilde{h}_2-\tilde{h}_1)$, subtracting and integrating by parts we get
\[
\begin{split}
\frac{\pa }{\pa t} \frac12 &\int_{\Sigma} |\Delta (\tilde{h}_2(\cdot, t)-\tilde{h}_1(\cdot, t))|^2 \, d \Ha^2\\
 &=- \int_{\Sigma} |\Delta^2 (\tilde{h}_2-\tilde{h}_1)(\cdot, t)|^2  \, d \Ha^2 +  \int_{\Sigma}  \Delta^2  (\tilde{h}_2-\tilde{h}_1)(\cdot, t) ( J_{h_2}(\cdot,t) - J_{h_1}(\cdot,t))  \, d \Ha^2\\
&\leq -\frac12 \int_{\Sigma} |\Delta^2 (\tilde{h}_2-\tilde{h}_1)(\cdot, t)|^2  \, d \Ha^2 +  \frac12\int_{\Sigma}  |J_{h_2}(\cdot,t) - J_{h_1}(\cdot,t)|^2  \, d \Ha^2.
\end{split}
\]
Fix $\e>0$ small. By choosing $\sigma_0$ smaller in \eqref{hirvio} if needed, we may integrate the above inequality over $(0,t)$, with $t < T_0$, and use Remark~\ref{rem aub 2} and Lemma~\ref{lemmaJ} to  obtain
\beq\label{boh0}
\begin{split}
& \|\tilde h_2(\cdot, t)-\tilde h_1(\cdot, t)\|_{H^2(\Sigma)}^2+  \int_0^{T_0} \int_{\Sigma}  |\nabla^4 (\tilde{h}_2-\tilde{h}_1)|^2 \, d \Ha^2 dt\\
 &\leq C\|\tilde h_2(\cdot, t)-\tilde h_1(\cdot, t)\|_{L^2(\Sigma)}^2+ C \int_0^{T_0}  \int_{\Sigma}  |\tilde{h}_2-\tilde{h}_1|^2 \, d \Ha^2  dt  \\
 &\quad +\eps \int_0^{T_0}  \int_{\Sigma}  |\nabla^4 (h_2 - h_1) |^2  \, d \Ha^2 dt +   C T_0^\theta \sup_{0 \leq t \leq T_0}  \|h_2(\cdot, t)-h_1(\cdot, t)\|_{H^2(\Sigma)}^2\\
&\leq C\sup_{0\leq t\leq T_0}\|\tilde h_2(\cdot, t)-\tilde h_1(\cdot, t)\|_{L^2(\Sigma)}^2\\
&\quad + \eps \int_0^{T_0}  \int_{\Sigma}  |\nabla^4 (h_2 - h_1) |^2  \, d \Ha^2 dt +   C T_0^\theta \sup_{0 \leq t \leq T_0}  \|h_2(\cdot, t)-h_1(\cdot, t)\|_{H^2(\Sigma)}^2\,.
\end{split}
\eeq

Next we have to estimate the first term on the right-hand side. To this aim we multiply  the equations satisfied by $\tilde h_1$ and $\tilde h_2$  by $\tilde{h}_2-\tilde{h}_1$,   subtract and get
\[
\begin{split}
\frac{\pa }{\pa t} \frac12 &\int_{\Sigma} |\tilde{h}_2(\cdot, t)-\tilde{h}_1(\cdot, t)|^2 \, d \Ha^2 =\int_{\Sigma}  (\tilde{h}_2(\cdot, t)-\tilde{h}_1(\cdot, t)) \, \frac{\pa }{\pa t}(\tilde{h}_2-\tilde{h}_1)(\cdot, t)  \, d \Ha^2   \\
 &=- \int_{\Sigma} (\tilde{h}_2(\cdot, t)-\tilde{h}_1(\cdot, t)) \Delta^2 (\tilde{h}_2-\tilde{h}_1)(\cdot, t)  \, d \Ha^2 \\
 &\quad +  \int_{\Sigma}  (\tilde{h}_2(\cdot, t)-\tilde{h}_1(\cdot, t)) ( J_{h_2}(\cdot, t) - J_{h_1}(\cdot, t))  \, d \Ha^2\\
&\leq - \int_{\Sigma} |\Delta (\tilde{h}_2-\tilde{h}_1)(\cdot, t)|^2  \, d \Ha^2 + \frac12 \int_{\Sigma}  |\tilde{h}_2(\cdot, t)-\tilde{h}_1(\cdot, t)|^2  \, d \Ha^2\\
&\quad + \frac12 \int_{\Sigma}  | J_{h_2}(\cdot, t) - J_{h_1}(\cdot, t) |^2  \, d \Ha^2.
\end{split}
\]
Integrating  over $(0,t)$, with $t <T_0$, and using again Lemma~\ref{lemmaJ} we get
\[
\begin{split}
\int_{\Sigma} \,|\tilde{h}_2(\cdot, t)-&\tilde{h}_1(\cdot, t)|^2 \, d \Ha^2 \leq T_0   \sup_{0 \leq t \leq T_0}  \|\tilde{h}_2(\cdot, t)-\tilde{h}_1(\cdot, t)\|_{L^2(\Sigma)}^2  \\
&+ \e\int_0^{T_0}\int_{\Sigma}|\nabla^4h_1-\nabla^4h_2|^2\, d\Ha^2dt+CT_0^\theta
\sup_{0\leq t\leq T_0}\|h_1(\cdot, t)-h_2(\cdot, t)\|^2_{H^2(\Sigma)},
\end{split}
\]
from which it follows that 
\begin{multline}\label{boh}
\sup_{0 \leq t \leq T_0}  \|\tilde{h}_2(\cdot, t)-\tilde{h}_1(\cdot, t)\|_{L^2(\Sigma)}^2\leq 2\e\int_0^{T_0}\int_{\Sigma}|\nabla^4h_1-\nabla^4h_2|^2\, d\Ha^2dt\\
+
2 CT_0^\theta
\sup_{0\leq t\leq T_0}\|h_1(\cdot, t)-h_2(\cdot, t)\|^2_{H^2(\Sigma)},
\end{multline}
provided that $T_0 \leq \frac12$. Combining \eqref{boh0} and \eqref{boh}, and taking $\e$ small and $T_0$ smaller if needed, we deduce that
\begin{multline}\label{contra}
\sup_{0 \leq t \leq T_0}  \|\tilde{h}_2(\cdot, t)-\tilde{h}_1(\cdot, t)\|_{H^2(\Sigma)}^2 + \int_0^{T_0} \int_{\Sigma}  |\nabla^4 (\tilde{h}_2-\tilde{h}_1)|^2 \, d \Ha^2 dt\\
\leq \frac12\biggl(\sup_{0 \leq t \leq T_0}  \|{h}_2(\cdot, t)-{h}_1(\cdot, t)\|_{H^2(\Sigma)}^2 + \int_0^{T_0} \int_{\Sigma}  |\nabla^4 ({h}_2-{h}_1)|^2 \, d \Ha^2 dt\biggr).
\end{multline}

\medskip

\noindent {\bf Step 4.} (Conclusion) We may proceed with a standard argument, by recursively setting $h_1=\bar h$, with $\bar h$ defined as in \eqref{accabar},  and $h_n:=\mathcal{L}(h_{n-1})$ and  for every $n\geq 2$.   From \eqref{contra} we have that there exists $h$ such that
$h_n\to h$ in $L^{\infty}(0,T_0; H^2(\Sigma))\cap L^2(0, T_0; H^4(\Sigma))$. Moreover, from Step 1 and Step 2 we have also that 
$h_n\wto h$ weakly in $H^1_{loc}(0, T; H^k(\Sigma))$  and that $h$ satisfies 
\eqref{Pitt1} and \eqref{newonw}. Using these convergences one can easily pass to the limit in the equations satisfied by the $h_n$'s to conclude that $h$ is a solution of \eqref{forced}. We remark that the smoothness of $h$ in time follows from the equation and from the regularity in space of $h$. Note that the smoothness assumption on $h_0$ can be removed by a standard approximation argument.   
Finally, the uniqueness follows from the same argument used to prove \eqref{contra}.
\end{proof}

\section{Short time existence for the surface diffusion flow with elasticity}\label{sec:existence}

Here we will prove the existence of the flow
\beq 
\label{FLOW}
V_t = \Delta_{\pa F_t} (H_{F_t} -Q(E(u_{F_t})) )
\eeq
where $u_{F_t}$ is the minimizer of the elastic energy, that is the solution to \eqref{uf}, with $F$ replaced bu $F_t$. 

The most crucial point for the proof of the short time existence of \eqref{FLOW}, is to prove sharp regularity estimates for $u_{F}$ up to the boundary $\pa F$ in terms of 
 regularity of $\pa F$. We prove this in the theorem below.

\begin{theorem}
\label{linearestimate}
Let $K >0$, $\alpha\in (0,1)$, and let $k\geq 3$ be an integer. There exists   $C_k = C_k(K)>0$ such that if $h\in H^k(\Sigma)$ and
$F_h\in \mathfrak{h}^{1,\alpha}_K(\Sigma)$, {defined as in \eqref{mathfrakh}}, then 
\beq\label{prima}
\|Q(E(u_{F_h}))\circ\pi^{-1}_{F_h}\|_{H^{k-\frac32}(\Sigma)}\leq C_k(\|h\|_{H^k(\Sigma)} +1).
\eeq
Moreover  if $h_1$, $h_2\in H^{3}(\Sigma)$ and  $F_{h_i}\in \mathfrak{h}^{3}_K(\Sigma)$ for $i=1, 2$, then there exists  $C = C(K)>0$ such that
\beq\label{seconda}
\| u_{F_{h_2}}\circ \pi^{-1}_{F_{h_2}} -    u_{F_{h_1}} \circ \pi^{-1}_{F_{h_1}}\|_{H^{3/2}(\Sigma)}\leq C \|h_2-h_1\|_{H^2(\Sigma)}.
\eeq
\end{theorem}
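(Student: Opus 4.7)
\medskip\noindent\textbf{Proof plan.}
The plan is to flatten the domain to the fixed reference configuration $\Omega\setminus G$ via a family of diffeomorphisms $\Phi_h$ depending on $h$, and then perform elliptic regularity on a single quasilinear system with coefficients depending on $(h,\nabla h)$. Concretely, let $\chi\in C_c^\infty(\mathcal N_{2\eta_0}(\Sigma))$ with $\chi\equiv 1$ on $\mathcal N_{\eta_0}(\Sigma)$ and set $\Phi_h(y):=y+\chi(y)\, h(\pi(y))\,\nu(\pi(y))$. Then $\Phi_h:\overline{\Omega\setminus G}\to\overline{\Omega\setminus F_h}$ is a diffeomorphism (for $K$ small the $C^{1,\alpha}$-norm of $\chi h\circ\pi\,\nu\circ\pi$ is small), $\Phi_h|_\Sigma=\pi^{-1}_{F_h}$, and $\Phi_h$ is the identity near $\pa_D\Omega$. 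Setting $\tilde u:=u_{F_h}\circ\Phi_h$, the equilibrium system \eqref{uf} becomes, by a direct change of variables, a system of the form
\[
\diver\bigl(\mathbb A(x,h,\nabla h)\,\nabla \tilde u\bigr)=0\quad\text{in }\Omega\setminus G,\qquad \mathbb A(x,h,\nabla h)[\nu]=0\ \text{on }\pa G\ \text{(suitably)},\qquad \tilde u=w_0\ \text{on }\pa_D\Omega,
\]
where $\mathbb A$ is smooth in its arguments and reduces to $\mathbb C$ when $h=0$; the highest order of $h$ appearing in the coefficients is $\nabla h$ itself (from $(D\Phi_h)^{-1}$ and from $\nu_{F_h}$ via \eqref{normal}).

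For the base case, since $F_h\in\mathfrak h^{1,\alpha}_K(\Sigma)$ the coefficients $\mathbb A$ are of class $C^{0,\alpha}$ with norm bounded by a constant $C(K)$; the uniform Legendre--Hadamard ellipticity of $\mathbb C$ together with $\Phi_h$ close to the identity gives uniform ellipticity of $\mathbb A$. Classical Schauder and $L^2$-based regularity theory for elliptic systems with Neumann-type conditions on $C^{1,\alpha}$ portions of the boundary yield $\|\tilde u\|_{C^{1,\alpha}(\overline{\Omega\setminus G})}+\|\tilde u\|_{H^2(\Omega\setminus G)}\le C(K)$. In particular the trace of $\nabla\tilde u$ on $\Sigma$ is in $L^\infty$, uniformly in $K$.

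The core of Part 1 is an inductive bootstrap giving $\|\tilde u\|_{H^k(\Omega\setminus G)}\le C_k(K)\bigl(1+\|h\|_{H^k(\Sigma)}\bigr)$. Assume the estimate for all $j\le k-1$. Apply $k-2$ tangential derivatives to the flattened system and commute them with $\mathbb A$; one obtains an equation for $\nabla^{k-2}\tilde u$ of the same elliptic type with right-hand side a finite sum of terms in which, by Leibniz and the chain rule, the derivatives of $h$ and of $\tilde u$ that appear distribute as $\nabla^{j_1}h\cdots\nabla^{j_m}h\cdot\nabla^{l}\tilde u$ with $\sum j_i+l\le k$. When all $j_i\le k-1$ and $l\le k-1$, such terms are controlled in $L^2$ by the inductive hypothesis and the $C^{1,\alpha}$-bound, via the interpolation Lemma \ref{aubinlemma} (exactly as in the proof of Proposition \ref{prop:mostro}, the subcritical power count closes). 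The \emph{only} genuinely top-order contribution is $\nabla^kh\cdot\nabla\tilde u$, whose $L^2$-norm is bounded by $\|h\|_{H^k(\Sigma)}\cdot\|\nabla\tilde u\|_{L^\infty}\le C(K)\|h\|_{H^k(\Sigma)}$ thanks to the base-case $L^\infty$-estimate; this is precisely where the \emph{linear} dependence on $\|h\|_{H^k(\Sigma)}$ arises. Boundary terms in the Neumann condition are handled analogously, noting that the top derivative of $h$ again multiplies $\nabla\tilde u|_\Sigma\in L^\infty(\Sigma)$. Elliptic regularity on the fixed domain $\Omega\setminus G$ then yields the claimed $H^k$-bound. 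Taking traces gives $\|\nabla u_{F_h}\circ\pi^{-1}_{F_h}\|_{H^{k-3/2}(\Sigma)}\le C_k(K)(1+\|h\|_{H^k(\Sigma)})$, and since $k-3/2\ge 3/2>1=(n-1)/2$ the space $H^{k-3/2}(\Sigma)$ is a Banach algebra, so the quadratic form $Q$ obeys $\|Q(E(u_{F_h}))\|_{H^{k-3/2}}\le C\|\nabla u_{F_h}\|_{L^\infty(\Sigma)}\|\nabla u_{F_h}\|_{H^{k-3/2}(\Sigma)}$, proving \eqref{prima}.

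For Part 2, pull both solutions back to the same reference by $\tilde u_i:=u_{F_{h_i}}\circ\Phi_{h_i}$. Subtracting the two flattened systems gives an elliptic system for $w:=\tilde u_2-\tilde u_1$ on $\Omega\setminus G$ of the form
\[
\diver\bigl(\mathbb A(\cdot,h_2,\nabla h_2)\nabla w\bigr)=-\diver\Bigl(\bigl(\mathbb A(\cdot,h_2,\nabla h_2)-\mathbb A(\cdot,h_1,\nabla h_1)\bigr)\nabla\tilde u_1\Bigr),
\]
with an analogous source in the boundary condition on $\pa G$. Since $h_i\in\mathfrak h^3_K(\Sigma)$ the Part 1 estimate for $k=3$ gives $\|\nabla\tilde u_1\|_{H^2(\Omega\setminus G)}\le C(K)$, while smoothness of $\mathbb A$ in $(h,\nabla h)$ together with Sobolev embedding $H^2(\Sigma)\hookrightarrow C^{0,\alpha}$ yields $\|\mathbb A(\cdot,h_2,\nabla h_2)-\mathbb A(\cdot,h_1,\nabla h_1)\|_{H^1(\Omega\setminus G)}\le C(K)\|h_2-h_1\|_{H^2(\Sigma)}$. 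Standard $H^2$-elliptic regularity for the difference equation then delivers $\|w\|_{H^2(\Omega\setminus G)}\le C(K)\|h_2-h_1\|_{H^2(\Sigma)}$, and the trace theorem gives \eqref{seconda}.

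\medskip\noindent\emph{Main obstacle.} The substantive difficulty is the induction step in Part 1: standard bootstraps would yield an estimate of the form $\|\tilde u\|_{H^k}\le C(\|h\|_{C^{k-1}})\|\tilde u\|_{L^2}$, which is nonlinear in $h$ and insufficient for the contraction argument of Section \ref{sec:existence}. Producing the \emph{linear} bound requires the algebraic observation that, after commuting derivatives with $\mathbb A(\cdot,h,\nabla h)$, every occurrence of $\nabla^k h$ is necessarily multiplied by $\tilde u$ or $\nabla\tilde u$, hence by a factor uniformly bounded in $L^\infty$ by the base-case $C^{1,\alpha}$-estimate, while all intermediate products $\nabla^{j_1}h\cdots\nabla^{j_m}h\cdot\nabla^l\tilde u$ are absorbed by subcritical interpolation against $\|\tilde u\|_{H^k}$ with small constants. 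Carrying this bookkeeping out carefully, uniformly up to the $C^{1,\alpha}$-boundary with the Neumann-type condition, is the technical heart of the theorem.
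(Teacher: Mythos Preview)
Your plan is essentially the paper's own argument: pull back to the fixed reference domain via $\Phi_h$, obtain a quasilinear Lam\'e-type system with coefficients $\mathbb A(x,h,\nabla h)$, use the Schauder $C^{1,\alpha}$-bound as the base case, differentiate, and close via Gagliardo--Nirenberg interpolation so that the intermediate products are absorbed as $\varepsilon\|\tilde u\|_{H^k}^2$ while only the top-order term $\nabla^k h\cdot\nabla\tilde u$ contributes the linear factor $\|h\|_{H^k}$; Part~2 is likewise done in the paper by subtracting the two flattened systems. The only presentational differences are that the paper works locally (straightening $\Sigma$ to a half-ball, tangential differencing, covering plus hole-filling) rather than invoking a global $H^k$-regularity statement, and that it does the $k$-th order estimate directly in one shot rather than by induction---your ``inductive hypothesis'' is in fact never used in a essential way, since the lower-order term $\|\tilde u\|_{H^{k-1}}$ is anyhow interpolated between $\|\tilde u\|_{H^k}$ and $\|\tilde u\|_{L^2}$.
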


\begin{proof}
We begin by proving \eqref{prima}. By standard approximation  argument  we may assume that $h$ is smooth, which implies that $u_{F_h}$ is smooth up to the boundary $\pa F_h$.  

We consider  a diffeomorphism $\Phi_h:\Om\setminus G\to \Om\setminus F_h$ such that  
$$
\Phi_h(x)=x+h(\pi(x))\nu(\pi(x))
$$
in $\mathcal N^+_{\eta_0}(G)$, where for any $\sigma>0$  $\mathcal N^+_\sigma(G)=\{x\in \Om\setminus G:\, d_G\leq \sigma \}$ is the one-sided neighborhood of $\Sigma$ . Note that we may construct $\Phi_h$ such that 
$\|\Phi_h-I\|_{H^k(\Om\setminus G)}+ \|\Phi^{-1}_h-I\|_{H^k(\Om\setminus G)}\leq C\|h\|_{H^k(\Sigma)}$.
 

Let us fix $x_0 \in \Sigma$. There exists a smooth diffeomorphism $\Phi$ from  a neighborhood $U$  of $x_0$ to a ball $B_{2R}$ which straightens the boundary such that $\Phi(U \cap (\Omega \setminus G)) = B_{2R}^+ = B_{2R} \cap \{x_3 >0\}$. Setting $v = u_{F_h} \circ \Phi_h\circ \Phi^{-1}$ and $\bar h:= h\circ \pi\circ \Phi^{-1}$,  $v$ is a solution of a system of the form
\beq\label{quasi eq}
\int_{B_{2R}^+} \mathbb{A}(x, \bar h, D \bar h) Dv : D\varphi \, dx = 0 
\eeq
for all $\varphi \in C^\infty( B_{2R}^+; \R^3)$ vanishing on $\pa B_{2R} \cap \{x_3 >  0\}$, where  the tensor $\mathbb{A}$ is smooth. In particular, by using the explicit definition of $\bar h$ and Lemma~\ref{lm:taldeitali} it holds $\| \bar h \|_{H^k(B_{2R}^+)} \leq C(k) (1+\| h \|_{H^k(\Sigma)})$ for every $k \in \mathbb{N}$. Moreover, by using Korn's inequality, one may check that $\mathbb{A}$ is  elliptic in the  sense that 
\beq 
\label{quasi elliptic}
\int_{B_{2R}^+} \mathbb{A}(x, \bar h, D \bar h) D \varphi : D\varphi \, dx  \geq c_0 \int_{B_{2R}^+} |D \varphi|^2 \, dx,  
\eeq
for all $\varphi \in C^\infty( B_{2R}^+; \R^3)$ vanishing on $\pa B_{2R} \cap \{x_3 >  0\}$.

Let us fix $k \geq 3$ and a multi-index $\beta=(\beta_1, \beta_2, 0)$, with $\beta_1+\beta_2= k-1$. 
By differentiating the equation \eqref{quasi eq} in the $\beta$-directions we have 
\[
\int_{B_{2R}^+} D^\beta (\mathbb{A}(x, \bar h, D \bar h) D v) : D\varphi \, dx = 0.  
\]
Let $\eta \in C_0^\infty(B_{2R})$ be a standard cut-off function such that $\eta \equiv 1$ in $B_{R}$ and $0 \leq \eta \leq 1$.  By choosing  $\varphi = D^\beta v  \eta^2$ as a test function in \eqref{quasi eq}  and by expanding the term $D^\beta ((\mathbb{A}(x, \bar h, D \bar h) D v)$ by Leibniz formula we deduce
\[
\begin{split}
\int_{B_{2R}^+}  &(\mathbb{A}(x, \bar h, D \bar h) D D^\beta v ) : D D^\beta v \eta^2 \, dx \leq 2 \int_{B_{2R}^+}  |\mathbb{A}(x, \bar h, D \bar h)| |D D^\beta v|  |D\eta | \eta |D^\beta v| \,  dx  \\
&+ C \sum_{i=1}^{k-1} \int_{B_{2R}^+}  |D^i \mathbb{A}(x, \bar h, D \bar h)| |D^{k-i} v|  ( |D D^\beta v| \eta^2 + |D^\beta v| |D\eta | \eta ) \,  dx 
\end{split}
\] 
By the ellipticity condition \eqref{quasi elliptic} we have
\[
\begin{split}
\frac{c_0}{2} \int_{B_{2R}^+} | D (D^\beta v)|^2 \eta^2 \, dx &\leq   c_0 \int_{B_{2R}^+} | D (D^\beta v \eta)|^2 \, dx  +  c_0 \int_{B_{2R}^+} | D^\beta v|^2 |D\eta|^2  \, dx  \\
&\leq  \int_{B_{2R}^+} (\mathbb{A}(x, \bar h, D \bar h) D (D^\beta v \eta) ) : D (D^\beta v \eta) \, dx  +  c_0\int_{B_{R}^+} | D^\beta v|^2 |D\eta|^2  \, dx \\
&\leq \int_{B_{2R}^+}  (\mathbb{A}(x, \bar h, D \bar h) D D^\beta v ) : D D^\beta v \eta^2 \, dx \\
&\qquad+  C \int_{B_{2R}^+}  |D D^\beta v|  |D\eta | \eta |D^\beta v| + | D^\beta v|^2 |D\eta|^2 \,  dx,
\end{split}
\]
where in the last inequality we have used fact that $\|\bar h \|_{C^{1,\alpha}} \leq C$, which in turn implies that $\mathbb{A}(x, \bar h, D \bar h)$ is bounded. Combining the previous estimates and using Young's inequality we obtain
\beq 
\label{quasi estimate}
 \int_{B_{R}^+} | D (D^\beta v)|^2  \, dx  \leq C \int_{B_{2R}^+} |D^{k-1} v |^2 \, dx  + C \sum_{i=1}^{k-1} \int_{B_{2R}^+}  |D^i \mathbb{A}(x, \bar h, D \bar h)|^2 |D^{k-i} v|^2 \,  dx. 
\eeq 
We denote $w = D \bar h$ and estimate by Leibniz formula  
\[
 \sum_{i=1}^{k-1} |D^i \mathbb{A}(x, \bar h, D \bar h)|^2 |D^{k-i} v|^2\leq C\sum_{i=1}^{k-1} |D^{k-i} v|^2+  C\sum_{i=1}^{k-1}\sum_{\substack{1\leq j_1\leq \dots\leq j_m\leq i \\ j_1+\dots+j_m\leq i \\ m\geq 1}}|D^{j_1}w|^2\cdots |D^{j_m}w|^2|D^{k-i}v|^2.
\]
Then by H\"older's inequality we get
\[
\begin{split}
 \sum_{i=1}^{k-1} \int_{B_{2R}^+}  &|D^i \mathbb{A}(x, \bar h, D \bar h)|^2 |D^{k-i} v|^2 \,  dx \leq C \| v\|_{H^{k-1}(B_{2R}^+)}^2 \\
&\quad + C\sum_{i=1}^{k-1} \sum_{\substack{1\leq j_1\leq \dots\leq j_m\leq i \\ j_1+\dots+j_m\leq i \\ m\geq 1}} \|D^{j_1}w \|^2_{\frac{2(k-1)}{j_1}}\dots \|D^{j_m}w \|^2_{\frac{2(k-1)}{j_m}} \|D^{k-i}v \|^2_{\frac{2(k-1)}{k-i-1}},
\end{split}
\]
where all the norms in the last line are evaluated in $B_{2R}^+$. Note that if $i = k-1$ then in the last term it is understood that $ \|D^{k-i}v \|_{\frac{k-1}{k-1-i}} =  \|D v \|_{L^\infty}$. Note  that  by standard Schauder estimates the assumption $\|h\|_{C^{1,\alpha}(\Sigma)}\leq K$  implies that $\|D v \|_{L^\infty(B^+_{2R})} \leq C$.  We use Lemma \ref{aubinlemma} to estimate 
\[
\|D^{j_l}w \|_{\frac{2(k-1)}{j_l}} \leq C \|w \|_{H^{k-1}}^{\theta(j_l)}\|w \|_{L^{\infty}}^{1- \theta(j_l)}    \leq C \|w \|_{H^{k-1}}^{\theta(j_l)}
\]
for $\theta(j_l): = \frac{j_l}{k-1}$. By the same lemma we also have 
\[
\|D^{k-i} v \|_{\frac{2(k-1)}{k-i-1}} \leq C \|v \|_{H^{k}}^{\theta}\| D v \|_{L^{\infty}}^{1- \theta} \leq C \|v \|_{H^{k}}^{\theta} 
\]
for $\theta = \frac{k-i-1}{k-1}$. Since $\theta(j_1) + \dots + \theta(j_m) \leq \frac{i}{k-1}$, from \eqref{quasi estimate} and from  the previous estimate we have by Young's inequality
\[
\begin{split}
\int_{B_{R}^+} | D (D^\beta v)|^2  \, dx  &\leq C   \| v\|_{H^{k-1}(B_{2R}^+)}^2 + C  \sum_{i=1}^{k-1} (\|w \|_{H^{k-1}(B_{2R}^+)}^{\frac{2i}{k-1}}+1)  \|v \|_{H^{k}(B_{2R}^+)}^{\frac{2(k-i-1)}{k-1}} \\
&\leq  \e \|D^k v\|_{L^2(B_{2R}^+)}^2 + C\| v\|_{H^{k-1}(B_{2R}^+)}^2 + C (1+\|h \|_{H^{k}(\Sigma)}^2).
\end{split}
\]
In order to control the remaining derivatives we use the equation \eqref{quasi eq} in the strong form 
\[
\Div(\mathbb{A}(x,\bar h, D\bar h) D v) = 0\,.
\] 
Indeed, observe that we have estimated all the derivatives of the type $D^\beta(Dv)$, where $\beta=(\beta_1, \beta_2, 0)$, 
with $\beta_1+\beta_2=k-1$. Using these estimates  and differentiating the equation $k-2$ times with respect to the horizontal directions and once in the vertical direction,  we may estimate 
$D^{\beta}(D_{x_3 x_3}v)$ for all $\beta=(\beta_1, \beta_2, 0)$, with $\beta_1+\beta_2=k-2$, by using an interpolation argument as before to control the lower order derivatives. Then we proceed by induction   by differentiating the equation $k-3$ times with respect to the horizontal directions and twice in the vertical direction, and so on, until we differentiate the equation $k-1$ times only in the vertical direction. As a result we obtain
\[
\int_{B_{R}^+} | D^k  v|^2  \, dx  \leq  \e \|D^k v\|_{L^2(B_{2R}^+)}^2 + C\| v\|_{H^{k-1}(B_{2R}^+)}^2 + C (1+\|h \|_{H^{k}(\Sigma)}^2).
\]
The previous estimate holds at every point on $\pa F_h$.  Thus we may cover    $\mathcal{N}_{\sigma_1}^+(F_h)$, with $\sigma_1<\frac{\eta_0}2$,  by a finite union of balls and use the previous estimate  in every ball of the covering. Precisely, we go back to the original map, set $u=u_{F_h}\circ \Phi_h$ for simplicity,  use Lemma~\ref{lm:taldeitali} and  conclude that there are $0 <\sigma_1 <\sigma_2 $  such that  
\[
\begin{split}
\int_{\mathcal{N}_{\sigma_1}^+} | D^k  u|^2  \, dx &\leq C \e \int_{\mathcal{N}_{\sigma_2}^+} | D^k  u|^2  \, dx +  C\|  u \|_{H^{k-1}(\mathcal{N}_{\sigma_2}^+)}^2 + C(1+ \|h \|_{H^{k}(\Sigma)}^2 )\\
&\leq 2 C \e \int_{\mathcal{N}_{\sigma_2}^+} | D^k  u|^2  \, dx +  C\|  u \|_{L^{2}(\mathcal{N}_{\sigma_2}^+)}^2 + C(1+ \|h \|_{H^{k}(\Sigma)}^2),
\end{split}
\]
where  the last inequality follows from standard interpolation inequality. Choosing $\e$ small we obtain 
\[
\int_{\mathcal{N}_{\sigma_1}^+} | D^k  u|^2  \, dx  \leq 2 \int_{\mathcal{N}_{\sigma_2}^+ \setminus \mathcal{N}_{\sigma_1}^+} | D^k  u|^2  \, dx +  C\|  u \|_{L^{2}(\mathcal{N}_{\sigma_2}^+)}^2 + C (1+\|h \|_{H^{k}(\Sigma)}^2).
\]
By standard interior regularity it holds 
\[
\int_{\mathcal{N}_{\sigma_2}^+ \setminus \mathcal{N}_{\sigma_1}^+} | D^k  u|^2  \, dx \leq C\|  u_{F_h} \|_{L^{2}(\Omega \setminus F_h)}^2 .
\]
 Again by standard interpolation we have that 
\[
\|  u \|_{H^{k}(\mathcal{N}_{\sigma_1}^+)} \leq C(1 + \|  u \|_{L^{2}(\mathcal{N}_{\sigma_1}^+)} + \|h \|_{H^{k}(\Sigma)}). 
\]
By the minimality and by Poincar\'e inequality we have that $\|  u_{F_h} \|_{L^{2}(\Omega \setminus F_h)}$ is bounded by the boundary value $w_0$.  Using again Lemma~\ref{lm:taldeitali} and the $C^{1}$ estimates on $u_{F_h}$,  we have from the above  inequality that  
\[
\| Q(E(u_{F_h}))\circ \Phi_h \|_{H^{k-1}(\mathcal{N}_{\sigma_1}^+)} \leq C(1 +  \|h \|_{H^{k}(\Sigma)} ).
\]
 From this inequality the first claim follows by the trace theorem. 
 
As for the second part of the lemma, let $\Phi_i$ be a diffeomorphism constructed as above from $\Om\setminus G$ to   $\Om\setminus F_{h_i}$. Note that, since $h_1$ and $h_2$ are bounded in $C^{1,\alpha}$,  we may construct the $\Phi_i$'s in such a way that 
$$
\|\Phi_2-\Phi_1\|_{H^1(\Om\setminus G)}\leq C \|h_2-h_1\|_{H^1(\Sigma)}.
$$

 As before we fix $x_0\in \Sigma$ and denote as before by $\Phi$ the diffeomorphism that straightens $\Sigma$. Setting $v_i=u_{F_{h_i}}\circ\Phi_i\circ \Phi^{-1}$ and $\bar h_i=h_i\circ\pi\circ \Phi$, we have that 
$$
\int_{B_{2R}^+} \mathbb{A}(x, \bar h_i, D \bar h_i) Dv_i : D\varphi \, dx = 0 
$$
for all $\varphi \in C^\infty( B_{2R}^+; \R^3)$ vanishing on $\pa B_{2R} \cap \{x_3 >  0\}$, where $\mathbb{A}$ is the same tensor as before. 

Differentiating the equations in the $x_j$-direction, $j=1,2$, and subtracting the two resulting equations  we obtain
\begin{align*}
\int_{B_{2R}^+}\mathbb{A}(x, \bar h_2, D \bar h_2)D(D_j(v_2-v_1)):&D\vphi \, dx = 
-\int_{B_{2R}^+}D_j(\mathbb{A}(x, \bar h_2, D \bar h_2))D(v_2-v_1):D\vphi \, dx\\
& - \int_{B_{2R}^+}[\mathbb{A}(x, \bar h_2, D \bar h_2)-\mathbb{A}(x, \bar h_1, D \bar h_1)]DD_jv_1:D\vphi \, dx\\
&- \int_{B_{2R}^+}D_j[\mathbb{A}(x, \bar h_2, D \bar h_2)-\mathbb{A}(x, \bar h_1, D \bar h_1)]Dv_1 :D\vphi \, dx.
\end{align*}
We choose   $\vphi=D_j(v_2-v_1)\eta^2$ as a testfunction and get by  arguing as before
\begin{align*}
\int_{B_{R}^+}|D(D_j(v_2-v_1))|^2\, dx  &\leq C \int_{B_{2R}^+}(1+|D^2 \bar h_2|^2+ |D^2 \bar h_1|^2)|Dv_2- Dv_1|^2 \, dx\\
&\quad+ C\int_{B_{2R}^+}(|\bar h_2-\bar h_1|^2+ |D \bar h_2- D\bar h_1|^2+|D^2\bar h_2-D^2\bar h_1|^2)|Dv_1|^2\, dx\\
&\quad + C\int_{B_{2R}^+}(|\bar h_2-\bar h_1|^2+ |D \bar h_2- D\bar h_1|^2)|D^2v_1|^2 \, dx \, .
\end{align*}
Recall first that as before  $\|Dv_1\|_{L^{\infty}} \leq C$. Moreover, we assume that $\|h_i\|_{H^3(\Sigma)} \leq K$ and therefore by the proof of the first stament we conclude that 
$\|v_i\|_{H^3(B_{2R}^+)} \leq C$. Using interpolation we get 
\[
\int_{B_{2R}^+}  |D^2 \bar h_1|^2|Dv_2- Dv_1|^2 \, dx \leq \| D^2 \bar h_1\|_{L^4}^2 \, \| Dv_2 - Dv_1\|_{L^4}^2 \leq C \|\bar h_1\|_{H^3}^2\,  \|v_2 - v_1\|_{H^2}^{\frac32} \|v_2 - v_1\|_{L^2}^{\frac12}\,.
\]
Estimating the other terms similarly and using the equation to estimate $D_{33}(v_2-v_1)$,  we get  for any $\e\in (0,1)$
\begin{align*}
\int_{B_{R}^+}|D^2(v_2-v_1)|^2 &\leq C\|v_2-v_1\|_{H^2(B_{2R}^+)}^{\frac32}\|v_2-v_1\|_{L^2(B_{2R}^+)}^{\frac12}+
C\|\bar h_2-\bar h_1\|_{H^2(B_{2R}^+)}^2\\
&\leq \e  \int_{B_{2R}^+}|D^2(v_2-v_1)|^2+ C\int_{B_{2R}^+}|v_2-v_1|^2+ C\|h_2-h_1\|_{H^2(\Sigma)}^2\,.
\end{align*}
Using a simple covering argument as before, going back to the original functions and arguing  as above we get  
 $$
 \|D^2(u_{F_{h_2}}\circ \Phi_{h_2}-u_{F_{h_1}}\circ \Phi_{h_1})\|_{L^2(\mathcal N_{\sigma_1}^+)}\leq  C\|u_{F_{h_2}}\circ \Phi_{h_2}-
 u_{F_{h_1}}\circ \Phi_{h_1}\|_{L^2(\mathcal N_{\sigma_2}^+)} +
 C\|h_2-h_1\|_{H^2(\Sigma)}.
 $$
 Observe now that writing down the equations satisfied by $u_{F_{h_i}}\circ \Phi_{h_i}$ in $\Om\setminus G$ and using 
as an admissible test function $\vphi=u_{F_{h_1}}\circ \Phi_{h_1}-u_{F_{h_2}}\circ \Phi_{h_2}$, one may check that  
$$
\|D(u_{F_{h_1}}\circ \Phi_{h_1}- u_{F_{h_2}}\circ \Phi_{h_2})\|_{L^2(\Om\setminus G)}\leq C\|\Phi_1-\Phi_2\|_{H^1(\Om\setminus G)}\leq C \|h_1-h_2\|_{H^1(\Sigma)}.
$$
The conclusion follows from this estimate and from  the previous one  by the Poincar\'e inequality.
\end{proof}

\begin{remark}
\label{rem after long}
Let $h_{F_i}$ and $u_{F_i}$ for $i= 1,2$ be as in Theorem~\ref{linearestimate}. The inequality at the end of the proof of the  lemma implies that 
\[
\|u_{F_{h_2}} \circ \pi^{-1}_{F_{h_2}} - u_{F_{h_1}} \circ \pi^{-1}_{F_{h_1}}  \|_{H^{1/2}(\Sigma)}\leq  C \|h_2-h_1\|_{H^1(\Sigma)}.
\]   
Moreover, if in addition to  the assumptions of the second part of Theorem~\ref{linearestimate}  we know also that  
$\|h_i\|_{C^{1}(\Sigma)}$ is sufficiently small for $i =1,2$,  then the   proof of the inequality \eqref{seconda}  also gives  the estimate    
\[
\| (Du_{F_{h_2}}) \circ \pi^{-1}_{F_{h_2}} - (Du_{F_{h_1}}) \circ \pi^{-1}_{F_{h_1}} \|_{L^2(\Sigma)}\leq C \|h_2-h_1\|_{H^2(\Sigma)}.
\]
\end{remark}

Let us consider the smooth flow $(F_{t})_{t \in (0,T_0)}$ with initial set $F_0$, which is a solution of  \eqref{forced} with smooth forcing term  $f : \Sigma \times [0,T_0) \to \R$. Here $T_0$ is the existence time provided by Theorem \ref{thm with f}.   For every given time $t \in (0,T_0)$ we consider the elastic equilibrium $u_t$ in $\Omega \setminus F_t$ defined in \eqref{uf} and  we use the regularity estimates from Theorem~\ref{linearestimate} to  establish the following lemma.

\begin{lemma}\label{prop:Q}
Let  $K_0>1$  be such that $|| Q(E(u_G))||_{L^\infty(\Sigma)} < K_0/4$.   There exist
$T>0$ and $\tilde \e>0$ with the following property:    if $\|h_0\|_{H^3(\Sigma)}<K_0$, and  $\|h_0\|_{L^2(\Sigma)}<\tilde \e$, and  $f$ is a  smooth function satisfying \eqref{innominata}  then the solution of \eqref{forced}, with initial datum $h_0$, provided by Theorem~\ref{thm with f} 
 exists for the time interval $(0, T)$ and it holds
\begin{equation}\label{prop:Q1}
 \sup_{0 \leq t \leq T} \|Q(E(u_t))\circ \pi_{F_t}^{-1}\|_{L^\infty(\Sigma)} +  \int_0^{T}  \|Q(E(u_t))\circ \pi_{F_t}^{-1}\|_{H^{3}(\Sigma)}^2\, dt  \leq K_0.
\eeq
Moreover,  for every  $k\in \N$ there exists $C'_k(K_0)>0$ such that 
\beq\label{prop:Q2}
\sum_{i=0}^k  \int_0^{T} \ t^i\|Q(E(u_t))\circ \pi_{F_t}^{-1}\|_{H^{2i+3}(\Sigma)}^2 \, dt \leq \frac12\biggl(C'_k(K_0) +  \sum_{i=0}^k  \int_0^{T} t^i\|f(\cdot, t)\|_{H^{2i+3}(\Sigma)}^2 \, dt\biggr).
\eeq

\end{lemma}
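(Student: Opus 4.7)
The plan is to invoke Theorem~\ref{thm with f} to produce the flow and then use Theorem~\ref{linearestimate}, applied slice-by-slice in $t$, to translate the parabolic Sobolev bounds on $h(\cdot,t)$ into Sobolev bounds on $Q_t:=Q(E(u_t))\circ\pi^{-1}_{F_t}$. First, for a small $\delta_0>0$ and a $T_0\in(0,1)$ to be fixed later, I would apply Theorem~\ref{thm with f} with the given $K_0$: this yields $T_0$ and a threshold $\tilde\e=\e_0$ such that the forced flow $h$ exists smoothly on $(0,T_0)$, satisfies $\sup_t\|h(\cdot,t)\|_{L^2(\Sigma)}\leq\delta_0$ via \eqref{Pitt1}, and obeys the parabolic smoothing estimates \eqref{newonw} for every $k\in\N$. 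I would then shrink both $\delta_0$ and $T_0$ as needed.

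For the $L^\infty$-bound in \eqref{prop:Q1}, I would split $Q_t=Q(E(u_G))+R_t$ and show $\sup_t\|R_t\|_{L^\infty(\Sigma)}<K_0/2$. Theorem~\ref{linearestimate} with $k=3$ gives the uniform $H^{3/2}(\Sigma)$-bound $\|Q_t\|_{H^{3/2}}\leq C(K_0)(\|h(\cdot,t)\|_{H^3}+1)\leq C(K_0)$. On the other hand, the smallness of $\|h(\cdot,t)\|_{L^2}$ together with the uniform $H^3$-bound from \eqref{newonw} at $k=0$ yields, via Gagliardo--Nirenberg interpolation, smallness of both $\|h(\cdot,t)\|_{H^2(\Sigma)}$ and $\|h(\cdot,t)\|_{C^1(\Sigma)}$ once $\delta_0$ is chosen small. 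Applying the refined Lipschitz estimate of Remark~\ref{rem after long} with $h_1=0$ produces $L^2(\Sigma)$-smallness of $(Du_t)\circ\pi^{-1}_{F_t}-Du_G$, and combining with the uniform $L^\infty$-bound on $Du_t+Du_G$ (itself extracted from the high-regularity bound on $u_t$ that underlies the proof of Theorem~\ref{linearestimate}) gives $L^2(\Sigma)$-smallness of $R_t$. Interpolating this with the uniform $H^{3/2}$-bound and invoking $H^s(\Sigma)\hookrightarrow L^\infty(\Sigma)$ for $s>1$ (valid since $\dim\Sigma=2$) upgrades it to $L^\infty(\Sigma)$-smallness, as required.

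For the integrated $H^3$-bound in \eqref{prop:Q1}, Theorem~\ref{linearestimate} with $k=5$ yields $\|Q_t\|_{H^3(\Sigma)}^2\leq C(K_0)(\|h(\cdot,t)\|_{H^5(\Sigma)}^2+1)$. Integrating in time and invoking \eqref{newonw} at $k=0$ shows that this time integral is dominated, uniformly in $T\leq T_0$, by a constant depending only on $K_0$; absolute continuity of the Lebesgue integral then lets me shrink $T_0$ so that $\int_0^T\|Q_t\|_{H^3}^2\,dt<K_0/2$, completing \eqref{prop:Q1}. For \eqref{prop:Q2}, the same scheme is iterated: for each $i\in\{0,\dots,k\}$, Theorem~\ref{linearestimate} applied with $k$ replaced by $2i+5$ gives
\[
\|Q_t\|_{H^{2i+3}(\Sigma)}^2\leq 2C_{2i+5}(K_0)^2\bigl(\|h(\cdot,t)\|_{H^{2i+5}(\Sigma)}^2+1\bigr);
\]
multiplying by $t^i$, integrating on $(0,T)$, and invoking \eqref{newonw} with the same $i$ to dominate $\int_0^T t^i\|h\|_{H^{2i+5}}^2\,dt$ in terms of $\|h_0\|_{H^3}^2$, $\int_0^T\|f\|_{L^\infty}^{q_i}\,dt$ and $\sum_{j=0}^i\int_0^T t^j\|f\|_{H^{2j+3}}^2\,dt$, and summing over $i=0,\dots,k$ produces the structure of \eqref{prop:Q2}; the prefactor $1/2$ then follows by absorbing the $K_0$- and $T$-dependent pieces into $C'_k(K_0)$ after shrinking $T_0$ once more to control the remaining factors in front of the $f$-sum.

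The main obstacle is the $L^\infty$-bound in \eqref{prop:Q1}: Theorem~\ref{linearestimate} alone would only give a bound on $Q_t$ in $H^{3/2}(\Sigma)\hookrightarrow L^\infty(\Sigma)$ of size $C(K_0)(\|h\|_{H^3}+1)\sim C(K_0)K_0$, which is generally much larger than the target $K_0$. The argument must therefore genuinely exploit the closeness of $F_t$ to $G$, encoded in the smallness of $\|h(\cdot,t)\|_{L^2}$, and combine the $L^2$-Lipschitz estimate of Remark~\ref{rem after long} with interpolation against the uniform high-regularity bound; this step is what fixes how small $\tilde\e$ (and hence $\delta_0$) must be taken.
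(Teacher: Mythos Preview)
Your treatment of the $L^\infty$-part of \eqref{prop:Q1} is fine, though more roundabout than the paper's: once $\|h(\cdot,t)\|_{C^{1,\alpha}}$ is small, standard Schauder estimates for the Lam\'e system give directly $\|u_t\circ\pi_{F_t}^{-1}-u_G\|_{C^{1,\alpha}(\Sigma)}\to 0$, hence $\|Q_t-Q(E(u_G))\|_{L^\infty}\to 0$, without passing through Remark~\ref{rem after long} and interpolation.

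The real problem is in the integrated bounds. Your chain for \eqref{prop:Q2} (and already for the $H^3$-integral in \eqref{prop:Q1}) reads
\[
\int_0^T t^i\|Q_t\|_{H^{2i+3}}^2\,dt\;\le\;C\int_0^T t^i\bigl(1+\|h\|_{H^{2i+5}}^2\bigr)\,dt
\;\le\;C\,T+C\,C_i\Bigl(\|h_0\|_{H^3}^2+\cdots+\sum_{j\le i}\int_0^T t^j\|f\|_{H^{2j+3}}^2\,dt\Bigr),
\]
where the constants $C,C_i$ are fixed (depending only on $K_0$ and $\Sigma$). Shrinking $T$ kills the $T$-terms but does \emph{nothing} to the coefficient $C\,C_i$ in front of the $f$-sum, nor to the term $C\,C_i\|h_0\|_{H^3}^2$. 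The latter also wrecks the ``absolute continuity'' argument you use for \eqref{prop:Q1}: the only uniform bound you have on $\int_0^T\|h\|_{H^5}^2\,dt$ is \eqref{newonw} at $k=0$, whose right-hand side contains $C_0\|h_0\|_{H^3}^2\le C_0K_0^2$, and this does not tend to $0$ as $T\to 0$. So neither the smallness in \eqref{prop:Q1} nor the factor $\tfrac12$ in \eqref{prop:Q2} follows from your estimates; a uniform choice of $T$ is impossible along this route.

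The missing idea is an interpolation step that produces a \emph{sublinear} power of the top norm. Instead of $\|Q_t\|_{H^{2i+3}}\le\|Q_t\|_{H^{2i+7/2}}$, interpolate
\[
\|Q_t\|_{H^{2i+3}}\;\le\;C\,\|Q_t\|_{H^{2i+7/2}}^{\,\theta}\,\|Q_t\|_{L^\infty}^{\,1-\theta}
\]
for some $\theta\in(0,1)$, then use \eqref{prima} with $k=2i+5$ on the first factor and the already established $L^\infty$-bound on the second. After Young's inequality this yields
\[
\int_0^T t^i\|Q_t\|_{H^{2i+3}}^2\,dt\;\le\;\eta\int_0^T t^i\|h\|_{H^{2i+5}}^2\,dt+C_\eta K_0^2\,T,
\]
with $\eta>0$ at your disposal. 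Now feeding \eqref{newonw} into the first term gives $\eta\,C_i$ as the coefficient of the $f$-sum, which can be made $\le\tfrac12$ by choosing $\eta$ small; the remaining $C_\eta K_0^2 T$ is then handled by shrinking $T$. This is precisely how the paper proceeds, and it is the step your argument lacks.
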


 \begin{proof}
 We begin by proving \eqref{prop:Q1}.  Let us fix $\alpha \in (0,1)$. Given $\de_0>0$ to be chosen later and taking $\tilde \e$ equal to the corresponding $\e_0$,  let $h(\cdot, t)$ be the solution  defined on  $(0,T_0)$, provided by Theorem~\ref{thm with f}.   Note that from \eqref{Pitt1} and \eqref{newonw} we have  
  $\sup_{0\leq t\leq T_0}\|h(\cdot, t)\|_{H^3}\leq C(K_0)$ and $\sup_{0\leq t\leq T_0}\|h(\cdot, t)\|_{L^2}\leq \delta_0$. In turn, by interpolation 
   $\sup_{0\leq t\leq T_0}\|h(\cdot, t)\|_{C^{1,\alpha}}\leq C\delta_0^\theta <1$ for some $\theta \in (0,1)$. Recall also  that by choosing $\tilde \e$ small we can make $\delta_0$ as small as we wish.  By standard elliptic estimates we have that 
  \[
\sup_{0\leq t\leq T_0}  \| u_t\circ \pi_{F_t}^{-1} - u_G \|_{C^{1,\alpha}(\Sigma)}\leq \omega(\delta_0),
  \]
and $\omega(\delta_0) \to  0$ as $\delta_0 \to 0$.  In turn,  we conclude that for every $t \in (0,T_0)$ it holds
 \[
\begin{split}
 \|Q(E(u_t))\circ \pi_{F_t}^{-1}\|_{L^\infty} \leq    \|Q(E(u_t)) - Q(E(u_G)) \circ \pi_{F_t}^{-1}\|_{L^\infty}  + \|Q(E(u_G)) \circ \pi_{F_t}^{-1}\|_{L^\infty} \leq \frac{K_0}{3}
  \end{split}
\]
provided  $\tilde \e$ (and thus $\de_0$) is small enough. 

Concerning the second term on the left-hand side of \eqref{prop:Q1}, we have by  a well-known interpolation result  and by \eqref{prima} for $k= 5$ from Theorem~\ref{linearestimate}
\[
  \begin{split}
  \int_0^T\|Q(E(u_t))\circ &\pi_{F_t}^{-1}\|^2_{H^{3}(\Sigma)}\, dt \\
&\leq C \int_0^T  \|Q(E(u_t))\circ \pi_{F_t}^{-1}\|^{2\theta}_{H^{\frac72}(\Sigma)}\|Q(E(u_t))\circ \pi_{F_t}^{-1}\|^{2(1-\theta)}_{L^\infty(\Sigma)}\, dt\\
  &\leq C \int_0^T(1+\|h(\cdot, t)\|_{H^{5}(\Sigma)}^{2\theta}) K_0^{2(1-\theta)}\, dt \\
  &\leq \eta \int_0^T \|h(\cdot, t)\|_{H^{5}(\Sigma)}^{2}\, dt+C_{\eta} K_0^2\,  T \\
&\leq \eta C \left( K_0^2 +  \int_0^T  \big( 1+ \| f(\cdot, t)\|_{L^\infty(\Sigma)}^{q_0}  +  \|f(\cdot, t)\|_{H^{3}(\Sigma)}^2\big)\, dt \right) +C_{\eta} K_0^2\,  T\\
&\leq  \eta C \left( K_0^2 + T+  T  K_0^{q_0}  +K_0 \right) +C_{\eta} K_0^2\,  T ,
  \end{split}
\]
where the second last inequality follows from \eqref{newonw}.  The inequality  \eqref{prop:Q1}  follows by  choosing $\eta$ and $T\leq T_0$ sufficiently small.

The inequality \eqref{prop:Q2} follows by a similar argument.  For all $i=1,\dots, k$ we have again by interpolation and  by \eqref{prima}  that 
\[
  \begin{split}
  \int_0^Tt^i\|Q(E(u_t))\circ & \pi_{F_t}^{-1}\|^2_{H^{2i+3}(\Sigma)}\, dt \\
  &\leq C \int_0^T t^i \|Q(E(u_t))\circ \pi_{F_t}^{-1}\|^{2\theta}_{H^{2i+\frac72}(\Sigma)}\|Q(E(u_t))\circ \pi_{F_t}^{-1}\|^{2(1-\theta)}_{L^\infty(\Sigma)}\, dt\\
  &\leq C_k\int_0^Tt^i (1+\|h(\cdot, t)\|_{H^{2i+5}}^{2\theta}) K_0^{2(1-\theta)}\, dt \\
  &\leq \eta\int_0^Tt^i \|h(\cdot, t)\|_{H^{2i+5}}^{2}\, dt+C_{k,\eta} K_0^2 \, T. 
  \end{split}
\]
  The conclusion  then  follows by estimating the last integral by means of \eqref{newonw} and choosing $\eta$  sufficiently small and $C_k'(K_0)$ sufficiently large. 
   \end{proof}

\begin{theorem}
\label{thm surf}
Let $K_0>1$  be such that $|| Q(E(u_G))||_{L^\infty(\Sigma)} < K_0/4$ and fix $\de_0>0$. There exist
$T\in (0,1)$ and $\e_1\in (0,1)$ with the following property:  if   $F_0 \in  \mathfrak{h}_{K_0}^3(\Sigma)$, {defined in \eqref{mathfrakh}},  with   $\|h_0\|_{L^2(\Sigma)}<\e_1$  then there exists a  unique solution $h$ to \eqref{FLOW} in  $H^1(0, T; H^1(\Sigma))\cap L^{\infty}(0, T; H^3(\Sigma))$.  Moreover, the solution belongs to  $H^1_{loc}(0,T; H^k(\Sigma))$ for every $k\geq 1$ and  it holds  
\beq\label{solfa}
\sup_{0\leq t\leq T}\|h(\cdot, t)\|_{L^2(\Sigma)}<\de_0
\eeq
 and 
\beq\label{solfa2}
 \sup_{0\leq t \leq T} t^k\|h(\cdot, t)\|_{H^{2k+3}(\Sigma)}^2 +  \int_0^{T}t^k\|h(\cdot, t)\|_{H^{2k+5}(\Sigma)}^2\,dt  \leq C(k, K_0). 
\eeq
\end{theorem}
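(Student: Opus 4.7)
The plan is to reinterpret equation \eqref{FLOW} as a forced flow of the form \eqref{forced} with the forcing term $f_t = -Q(E(u_{F_t}))\circ\pi^{-1}_{F_t}$, and to find such an $f$ by a Banach fixed point argument. Let $K_0$ be as in the statement and let $T\in(0,1)$ and $\tilde\e>0$ be the constants provided by Lemma~\ref{prop:Q}. Define the closed convex set
\[
\mathcal{X}_T := \Bigl\{f\in C^\infty(\Sigma\times[0,T]):\; \sup_{0\leq t\leq T}\|f(\cdot,t)\|_{L^\infty(\Sigma)} + \int_0^T\|f(\cdot,t)\|_{H^3(\Sigma)}^2\,dt\leq K_0\Bigr\},
\]
equipped with the norm
\[
\|f\|_{\mathcal{X}_T}^2 := \sup_{0\leq t\leq T}\|f(\cdot,t)\|_{L^2(\Sigma)}^2 + \int_0^T\|f(\cdot,t)\|_{H^{3/2}(\Sigma)}^2\,dt.
\]
For $f\in\mathcal{X}_T$, let $h^f$ be the solution of \eqref{forced2} with initial datum $h_0$ given by Theorem~\ref{thm with f}, let $F^f_t := F_{h^f(\cdot,t)}$, and set
\[
\mathcal{T}(f)(\cdot,t) := -Q(E(u_{F^f_t}))\circ\pi^{-1}_{F^f_t}.
\]
By Lemma~\ref{prop:Q} (shrinking $\tilde\e$ and $T$ if necessary) the map $\mathcal{T}$ sends $\mathcal{X}_T$ into itself. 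Any fixed point of $\mathcal{T}$ is  a solution of \eqref{FLOW}.

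The heart of the argument is to show that, up to decreasing $T$, the map $\mathcal{T}$ is a contraction on $\mathcal{X}_T$ with respect to $\|\cdot\|_{\mathcal{X}_T}$. Given $f_1,f_2\in\mathcal{X}_T$, set $h_i := h^{f_i}$. A straightforward adaptation of Step~3 of the proof of Theorem~\ref{thm with f} (subtract the two equations, test against $\Delta^2(h_2-h_1)$ and $(h_2-h_1)$, and absorb nonlinear errors via Lemma~\ref{lemmaJ}) yields
\[
\sup_{0\leq t\leq T}\|h_2(\cdot,t)-h_1(\cdot,t)\|_{H^2(\Sigma)}^2 + \int_0^T\|h_2-h_1\|_{H^4(\Sigma)}^2\,dt \leq C\int_0^T\|f_2-f_1\|_{L^2(\Sigma)}^2\,dt,
\]
provided $T$ is small; the point is that the $f$-dependence in $J_2$ (cf. \eqref{J2}) is linear, so the difference of forcing terms enters linearly. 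Combining this estimate with the key linear bound \eqref{seconda} of Theorem~\ref{linearestimate}, which gives
\[
\|\mathcal{T}(f_2)(\cdot,t)-\mathcal{T}(f_1)(\cdot,t)\|_{H^{3/2}(\Sigma)} \leq C\|h_2(\cdot,t)-h_1(\cdot,t)\|_{H^2(\Sigma)},
\]
and using Remark~\ref{rem after long} to control the $L^2$-difference by $\|h_2-h_1\|_{H^1}$, we deduce $\|\mathcal{T}(f_2)-\mathcal{T}(f_1)\|_{\mathcal{X}_T}\leq C T^{\theta}\|f_2-f_1\|_{\mathcal{X}_T}$ for some $\theta>0$. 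Choosing $T$ small makes $\mathcal{T}$ a contraction, and the Banach fixed point theorem produces the required solution $h$, first at the level of $L^2$-in-time $H^{3/2}$-regularity.

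Once a fixed point $f^\ast$ has been found and $h:=h^{f^\ast}$, the quantitative bound \eqref{solfa} is inherited from \eqref{Pitt1}, while \eqref{solfa2} is obtained by feeding the higher regularity estimate \eqref{prop:Q2} for $Q(E(u_t))\circ\pi^{-1}_{F_t}$ back into the estimate \eqref{newonw} of Theorem~\ref{thm with f}: indeed \eqref{prop:Q2} bounds $\sum_i\int_0^T t^i\|f^\ast(\cdot,t)\|_{H^{2i+3}}^2\,dt$ by $\tfrac12 C'_k(K_0)+\tfrac12\sum_i\int_0^T t^i\|f^\ast(\cdot,t)\|_{H^{2i+3}}^2\,dt$, which can be absorbed into the left-hand side, so that \eqref{newonw} closes and gives \eqref{solfa2}. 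Smoothness of $h$ in the time variable then follows from the equation by bootstrap, and uniqueness is an immediate consequence of the contraction estimate applied to two possible solutions.

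The main obstacle in this scheme is the contraction property of $\mathcal{T}$, which fundamentally depends on the \emph{linear} structure of the elliptic estimate \eqref{seconda}. Without linearity in $h_2-h_1$ one could not absorb the elastic contribution into the parabolic structure of the forced flow and the whole fixed point argument would break down; this is exactly what Theorem~\ref{linearestimate} is designed to provide.
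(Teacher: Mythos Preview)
Your overall strategy---solve the forced flow for each $f$, then find a fixed point of $f\mapsto -Q(E(u_{F^f_t}))\circ\pi_{F_t}^{-1}$---is the same as the paper's, and the bootstrap for \eqref{solfa2} via \eqref{prop:Q2} and \eqref{newonw} is correctly identified. The problem is in the contraction step, where your derivative accounting does not close.

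First, the Lipschitz estimate you claim for $h_2-h_1$ is not obtainable by testing with $\Delta^2(h_2-h_1)$. The term $J_2$ in \eqref{J2} contains $\Delta f$, so after subtracting you face $\int_\Sigma \Delta^2(h_2-h_1)\,\Delta(f_2-f_1)$, which forces $\int_0^T\|f_2-f_1\|_{H^2}^2\,dt$ on the right-hand side, not $\int_0^T\|f_2-f_1\|_{L^2}^2\,dt$. Second, the inequality you attribute to \eqref{seconda},
\[
\|\mathcal{T}(f_2)(\cdot,t)-\mathcal{T}(f_1)(\cdot,t)\|_{H^{3/2}(\Sigma)}\le C\|h_2-h_1\|_{H^2(\Sigma)},
\]
is not what \eqref{seconda} says: \eqref{seconda} bounds $u_{F_{h_2}}\!\circ\pi^{-1}-u_{F_{h_1}}\!\circ\pi^{-1}$ in $H^{3/2}$, whereas $\mathcal{T}(f)=-Q(E(u))$ depends on $Du$, so you lose a full derivative and only get $H^{1/2}$ control on the $Q$-difference. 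With either mismatch the contraction fails regardless of the choice of $\|\cdot\|_{\mathcal X_T}$.

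The paper resolves both issues by working entirely at the $L^2(0,T;L^2(\Sigma))$ level. For the $h$-Lipschitz estimate it uses the \emph{geometric} form of \eqref{forced} on the moving boundary $\pa F_{t,i}$: multiplying by $(h_2-h_1)\circ\pi$ and integrating the Laplace--Beltrami by parts twice puts all derivatives on the test function, so $f_i$ appears with no derivatives and one obtains \eqref{bohh} with $\int_0^T\|f_2-f_1\|_{L^2}^2$ on the right. For the $Q$-difference it exploits the Neumann condition $\C E(u_{F_t})[\nu_{F_t}]=0$: splitting $Du_2-Du_1$ into tangential and normal parts on $\Sigma$, the normal part is small (since $\nu-\nu_{F_t}$ is small in $L^\infty$), while the tangential part is $\nabla(u_2\circ\pi^{-1})-\nabla(u_1\circ\pi^{-1})$, which is then interpolated between $H^{1/2}$ and $H^{-1/2}$ using \eqref{seconda} and Remark~\ref{rem after long}. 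This yields $\|Q_2-Q_1\|_{L^2}^2\le 2\e\|\nabla^2(h_2-h_1)\|_{L^2}^2+C_\e\|h_2-h_1\|_{L^2}^2$, which together with \eqref{bohh} closes the contraction. This Neumann-condition trick is the missing ingredient in your sketch.
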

\begin{proof}
\
We divide the proof into three steps.

\medskip

\noindent{\bf Step 1.} 
  Let $K_0$, $T$ be as in  Lemma~\ref{prop:Q}. Let $\mathcal{S}$  be the set of functions  in $C^\infty(0,T; C^\infty(\Sigma))$ that satisfy
$$
 \sup_{0 \leq t \leq T} \| f(\cdot, t)\|_{L^\infty(\Sigma)} +  \int_0^{T}  \|f(\cdot, t)\|_{H^{3}(\Sigma)}^2\, dt  \leq K_0
$$
and
$$
 \sum_{i=0}^k  \int_0^{T} \big(t^i\|f(\cdot, t)\|_{H^{2i+3}(\Sigma)}^2\big)\, dt \leq C_k'(K_0)
$$
for every $k \in \mathbb{N}$, where $C_k'(K_0)$ are the constants from \eqref{prop:Q2}. 
We define a map $\mathscr{L} : \mathcal{S} \to \mathcal{S}$ as  $\mathscr{L}(f)(\cdot, t):=-Q(E(u_t))\circ\pi^{-1}_{F_t}$ for all $t\in (0, T)$, where $F_t$ is the solution of \eqref{forced} with initial datum $h_0$ and  forcing term $f$, and where $u_t$ stands for $u_{F_t}$, that is for the elastic equilibrium in $\Om\setminus F_t$.
Lemma  \ref{prop:Q} implies that the map $\mathscr{L} : \mathcal{S} \to \mathcal{S}$ is well defined,   provided that $\e_1\leq\tilde\e$.
Note also that $ \mathcal{S}$ is clearly  nonempty as the zero function  belongs to $\mathcal{S}$.

We will show that $\mathscr{L} : \mathcal{S} \to \mathcal{S}$ is a contraction with respect to  a suitable norm.

\medskip

\noindent{\bf Step 2.}  Fix $\mu\in (0,1)$. Let $f_1$ and $f_2$ be two smooth functions in $\mathcal{S}$ and let $h_1$ and $h_2$ be the corresponding solutions of \eqref{forced2}  with intial datum  $h_0$. The goal in this step is to show that  it holds
 \beq\label{boh2}
\int_0^T \int_{\Sigma}(h_2(\cdot, t)-h_1(\cdot, t))^2\, d\Ha^2 dt \leq \mu \int_0^T\int_{\Sigma}(f_2(\cdot, t)-f_1(\cdot, t))^2d\Ha^2dt,
 \eeq
by possibly decreasing the time $T$.  We recall that by Theorem  \ref{thm with f} we have that 
\[
\sup_{0 \leq t \leq T} \| h(\cdot, t)\|_{L^2(\Sigma)} \leq \delta_0 \qquad \text{and} \qquad \sup_{0 \leq t \leq T} \| h(\cdot, t)\|_{H^3(\Sigma)} \leq C(K_0)\,,
\]
provided that $\e_1<\e_0$.
By interpolation these imply that $\sup_{0 \leq t \leq T} \| h(\cdot, t)\|_{C^{1,\alpha}(\Sigma)} \leq C \delta_0^\theta < 1$ for some $\theta \in (0,1)$. In turn, by standard 
Schauder estimates the corresponding elastic equilibria in $F_{h(\cdot, t)}$ are uniformly  bounded in $C^{1,\alpha}$ up to the boundary, i.e.,  $\sup_{0\leq t\leq T}  \| u_t\circ \pi_{F_t}^{-1}\|_{C^{1,\alpha}(\Sigma)}\leq C$.  We will use these facts repeatedly in the proof. 

We denote by $F_{t,i}$ the set  related to $h_i(\cdot, t)$ with   $\pa F_{t,i}= \{ x + h_i(x,t) \nu(x) : x \in \Sigma\}$. 
We multiply \eqref{forced} for $i=1,2$ by $\big((h_2-h_1)\circ\pi\big)  \big(({J}_i\circ\pi)\nu_{F_{t,i}}\cdot (\nu\circ\pi)\big)^{-1}$, where ${J}_i$ stands for the tangential Jacobian on $\Sigma$ of the map $x\mapsto x+h_i(x)\nu(x)$ and $\pi$ for the  projection on $\Sigma$.   We then get
\begin{multline*}
\int_{\pa F_{t,i}}(\pa_t h_i(\cdot, t)\circ\pi )   \frac{(h_1-h_2)\circ\pi}{{J}_i\circ\pi}\, d\Ha^2  \\
=
\int_{\pa F_{t,i}} \Delta_{\pa F_{t,i}}[H_{\pa F_{t,i}}+f_i(\cdot, t)\circ\pi)]\big((h_2-h_1)\circ\pi\big) \big(({J}_i\circ\pi) \nu_{F_{t,i}}\cdot (\nu\circ\pi)\big)^{-1}\, d\Ha^2.
\end{multline*}
Recall that, denoting by $\pa_{\tau_1} h_i$ and $\pa_{\tau_2} h_i$ the tangential derivatives of $h_i$ in the directions of the principal curvatures, we have   
\[
{J}_i=\sqrt{(1+h_i k_1)^2(1+h_ik_2)^2+(1+h_i k_1)^2(\pa_{\tau_1} h_i)^2+(1+h_i k_2)^2(\pa_{\tau_2} h_i)^2},
\]
where $k_1$, $k_2$ are the principal curvatures of $\Sigma$. Therefore we have by the formula for the outer normal \eqref{normal} that 
\[
\big(({J}_i\circ\pi ) \nu_{F_{t,i}} \cdot (\nu\circ\pi)\big)^{-1}=\frac1{(1+h_i k_1)(1+h_ik_2)}\circ\pi=:R(\cdot, h_i)\circ\pi.
\] 
By integrating by   parts we get
\begin{multline*}
\int_{\pa F_{t,i}}(\pa_t h_i(\cdot, t)\circ\pi )   \frac{(h_1-h_2)\circ\pi}{{J}_i\circ\pi}\, d\Ha^2  \\
=
\int_{\pa F_{t,i}} (H_{\pa F_{t,i}}+f_i(\cdot, t)\circ\pi)\Delta_{\pa F_{t,i}}[(h_1-h_2)\circ\pi \, R(\cdot, h_i)\circ\pi]\, d\Ha^2
\end{multline*}
Rewriting the integrals above on $\Sigma$ and subtracting, we have
\begin{multline*}
\frac12\frac{\pa}{\pa t}\int_{\Sigma}(h_2-h_1)^2\, d\Ha^2\\=
 \int_{\Sigma} \bigl(J_2 H_{\pa F_{t,2}}\circ \pi^{-1}_{F_{t,2}}-J_1 H_{\pa F_{t,1}}\circ \pi^{-1}_{F_{t,1}}+J_2f_2-J_1f_1\bigr)\Delta_{\pa F_{t,2}}[(h_2-h_1)\circ\pi R(\cdot, h_2)\circ\pi]\circ\pi^{-1}_{F_{t,2}}\, d\Ha^2\\
 +  \int_{\Sigma} J_1(H_{\pa F_{t,1}}\circ \pi^{-1}_{F_{t,1}}+f_1)\bigl(\Delta_{\pa F_{t,2}}[(h_2-h_1)\circ\pi R(\cdot, h_2)\circ\pi]\circ\pi^{-1}_{F_{t,2}}\\
 -
 \Delta_{\pa F_{t,1}}[(h_2-h_1)\circ\pi R(\cdot, h_1)\circ\pi]\circ\pi^{-1}_{F_{t,1}}\bigl)\, d\Ha^2.
\end{multline*}
We recall \eqref{mean curvature} and \eqref{laplacianogm}, where the coefficients $A$, $A_1$ and $A_2$ vanish  as $(h, \nabla h) = 0$.  
We recall also that $\|h_i(\cdot, t)\|_{C^{1,\alpha}}$ is  small uniformly in time and that $f_i$ are uniformly bounded with respect to time.
After straighforward calculations we have 
\begin{align*}
&\frac12\frac{\pa}{\pa t}\int_{\Sigma}(h_2-h_1)^2\, d\Ha+\frac12 \int_\Sigma|\Delta(h_2-h_1)|^2\, d\Ha^2\leq \e\int_{\Sigma}|\nabla^2(h_2-h_1)|^2\, d\Ha^2 \\
&+ C\int_{\Sigma}(1+|\nabla ^2h_1|+|\nabla^2h_2|)(|h_2-h_1|+|\nabla (h_2-h_1)|)\cdot\\
&\qquad\qquad\qquad\qquad\qquad\qquad\qquad\qquad\cdot(|h_2-h_1|+|\nabla (h_2-h_1)|+|\nabla^2(h_2-h_1)|)\, d\Ha^2\\
&+C\int_{\Sigma}|f_2-f_1|\left((1+|\nabla ^2h_1|+|\nabla^2h_2|)( |h_2 - h_1|+|\nabla (h_2-h_1)|)+|\nabla^2(h_2-h_1)|\right) d\Ha^2\\
&+C \int_\Sigma (1+|\nabla ^2h_1|^2+|\nabla^2h_2|^2)( |h_2 - h_1|^2+|\nabla (h_2-h_1)|^2)\, d\Ha^2 =:RHS.
\end{align*}
Using Young's Inequality we obtain 
\begin{align*}
&RHS\leq \e\int_{\Sigma}|\nabla^2(h_2-h_1)|^2\, d\Ha^2\\
&+ C\int_{\Sigma}(1+|\nabla^2 h_1|^2+|\nabla^2 h_2|^2)(|h_2-h_1|^2+|\nabla(h_2-h_1)|^2)\, d\Ha^2+C\int_{\Sigma}|f_2-f_1|^2\, d\Ha^2.
\end{align*}

Observe now that by interpolation, by controlling the second derivatives of $h_i$ with the $H^3$-norms, and using the fact that $\|h(\cdot, t)\|_{H^3}$ is bounded uniformly with respect to time we have
\begin{align*}
&\int_{\Sigma}(1+|\nabla^2 h_1|^2+|\nabla^2 h_2|^2)(|h_2-h_1|^2+|\nabla(h_2-h_1)|^2)\, d\Ha^2\\
&\leq C (1+ \|\nabla^2 h_1\|_{L^4}^2+ \|\nabla^2 h_2\|_{L^4}^2)\|h_2-h_1\|_{W^{1,4}}^2 \leq C\|h_2-h_1\|_{H^2}^{\frac32}\|h_2-h_1\|_{L^2}^{\frac12}.
\end{align*}
From the previous inequalities  we get
\begin{align*}
\frac12\frac{\pa}{\pa t}\int_{\Sigma}(h_2-h_1)^2\, d\Ha^2 & \leq -\frac12 \int_\Sigma|\Delta(h_2-h_1)|^2\, d\Ha^2+
\e\int_\Sigma|\nabla^2(h_2-h_1)|^2\, d\Ha^2\\
&\qquad+C_\e\int_{\Sigma}(|\nabla(h_2-h_1)|^2+(h_2-h_1)^2+(f_2-f_1)^2) \, d\Ha^2 .
\end{align*}
Using now  Remark~\ref{rem aub 2} we in turn obtain 
\begin{align*}
&\frac12\frac{\pa}{\pa t}\int_{\Sigma}(h_2-h_1)^2\, d\Ha^2 +\frac14 \int_\Sigma|\nabla^2(h_2-h_1)|^2\, d\Ha^2\leq C\int_{\Sigma}(|h_2-h_1|^2+(f_2-f_1)^2)\, d\Ha^2.
\end{align*}
Integrating this with respect to time over  $(0,t)$, with $t\in (0, T)$,  we have
\begin{align}\label{bohh}
&\int_{\Sigma}(h_2(\cdot, t)-h_1(\cdot, t))^2\, d\Ha^2+\frac12 \int_0^t\int_\Sigma|\nabla^2(h_2(\cdot,s)-h_1(\cdot, s))|^2\, d\Ha^2ds\\
&\leq C\int_0^t\int_{\Sigma}(|h_2(\cdot,s)-h_1(\cdot, s)|^2+(f_2(\cdot, s)-f_1(\cdot, s))^2) \, d\Ha^2ds.\nonumber
\end{align}
Integrating the above inequality with respect to time over $(0,T)$  we obtain \eqref{boh2} when $T$ is sufficiently small.

\medskip

\noindent{\bf Step 3.} 
Here we finally prove that the map $\mathscr{L} : \mathcal{S} \to \mathcal{S}$ is a contraction with respect to the $L^2(0,T; L^2(\Sigma))$-norm. To be more precise, let 
$f_1$ and $f_2$ be two functions in $\mathcal{S}$ and $h_1$ and $h_2$ the corresponding solutions of \eqref{forced}. For simplicity we denote  the elastic equilibrium 
for $F_{h_i}$ as $u_i(\cdot, t):=u_{F_{t,i}}$, for $i =1,2$.  Then $\mathscr{L}(f_i) = -Q(E(u_i))\circ\pi^{-1}_{F_{t,i}}$ and our goal is to show 
\beq
\label{thm contraction}
\int_0^{T} \|Q(E(u_2(\cdot, t)))\circ{\pi^{-1}_{F_{t,2}}}-Q(E(u_1(\cdot, t)))\circ{\pi^{-1}_{F_{t,1}}}\|^2_{L^2(\Sigma)}dt\\
\leq \frac12 \int_0^{T} \|f_2(\cdot,t) - f_1(\cdot, t)\|_{L^2(\Sigma)}^2 \, dt. 
\eeq

Let us fix $t \in (0,T). $ We begin by proving  
\beq
\begin{split}\label{napoli}
\|Q(E(u_2(\cdot, t)))\circ{\pi^{-1}_{F_{t,2}}}&-Q(E(u_1(\cdot, t)))\circ{\pi^{-1}_{F_{t,1}}}\|_{L^2(\Sigma)}\\
&\leq C\|\nabla(u_2(\cdot, t)\circ \pi^{-1}_{F_{t,2}})-\nabla(u_1(\cdot, t)\circ \pi^{-1}_{F_{t,1}})\|_{L^2(\Sigma)}\\
&\qquad+ \e\|\nabla^2(h_2(\cdot, t)-h_1(\cdot, t))\|_{L^2(\Sigma)}^2  + C\|h_2(\cdot, t)-h_1(\cdot, t)\|_{H^1(\Sigma)}.
\end{split}
\eeq
To shorten the notation we  denote $U_i := Du_i \circ{\pi^{-1}_{F_{t,i}}}$, $\nu_i = \nu_{F_{t,i}}\circ{\pi^{-1}_{F_{t,i}}}$  and $h_i = h_i(\cdot, t)$ for $i = 1,2$. Recall that $Q(E(u_i(\cdot, t)))\circ{\pi^{-1}_{F_{t,i}}} =\frac12 \C U_i : U_i $. We may  thus write 
\[
\|Q(E(u_2(\cdot, t)))\circ{\pi^{-1}_{F_{t,2}}}-Q(E(u_1(\cdot, t)))\circ{\pi^{-1}_{F_{t,1}}}\|_{L^2(\Sigma)}  =\frac12 \|\C(U_2 + U_1) :  (U_2 -  U_1)\|_{L^2(\Sigma)}.
\]
We estimate this  simply as
\beq \label{napoli 2}
\begin{split}
\|\C (U_2 + U_1) : (U_2 -  U_1)\|_{L^2(\Sigma)}  \leq &\|\C (U_2 + U_1) :  \big((U_2 -  U_1) (I - \nu \otimes \nu)\big) \|_{L^2(\Sigma)}  \\
&+  \| \C (U_2 + U_1) : \big( (U_2 -  U_1)  \, (\nu \otimes \nu)  \big) \|_{L^2(\Sigma)} 
\end{split}
\eeq
Note that by the second condition in \eqref{uf} it holds $\C  U_i[\nu_i]= \C E(u_i)\circ\pi^{-1}_{F_{t,i}}[\nu_i]=0$ on $\Sigma$. We use this equality to estimate the last term in \eqref{napoli 2} by
\[
\begin{split}
&\| \C (U_2 + U_1) :   \big((U_2 -  U_1)  \, (\nu \otimes \nu)\big)  \|_{L^2(\Sigma)}    \\
&\leq \| \C (U_2 + U_1) : \big((U_2 -  U_1)  \, ( \nu \otimes (\nu - \nu_2) )\big)  \|_{L^2(\Sigma)} + \| \C  U_1 :  \big( (U_2 - U_1)  \, (\nu \otimes  \nu_2  ) \big)  \|_{L^2(\Sigma)} \\
&= \| \C (U_2 + U_1) : \!\big((U_2 -  U_1)  \, ( \nu \otimes (\nu - \nu_2) )\big)  \|_{L^2(\Sigma)} + \| \C  U_1 : \!  \big((U_2 - U_1) \, (\nu \otimes  (\nu_2 - \nu_1)  ) \big)  \|_{L^2(\Sigma)}. 
\end{split}
\]
Using the expression \eqref{normal} for the normal $\nu_2$ and the uniform $C^{1,\alpha}$-bound for $h_i$ we deduce that $\|\nu  - \nu_2\|_{L^\infty(\Sigma)} \leq C  \delta_0^\theta$ and 
$\| \nu_2 - \nu_1 \|_{L^2(\Sigma)} \leq C\| h_2 - h_1 \|_{H^1(\Sigma)}$. Moreover, by the $C^{1,\alpha}$-bound  for $u_i$  we have that $\| U_i \|_{L^\infty} \leq C$ and by the second inequality in Remark \ref{rem after long}  it holds $\| U_2 - U_1 \|_{L^2(\Sigma)} \leq C \| h_2 - h_1 \|_{H^2(\Sigma)}$. Therefore we may estimate the above inequality as 
\[
\| \C (U_2 + U_1) : \big(  (U_2 -  U_1)(\nu \otimes \nu)  \big)\|_{L^2(\Sigma)} \leq  \e \| h_2 - h_1 \|_{H^2(\Sigma)}  +  C \| h_2 - h_1 \|_{H^1(\Sigma)}. 
\]
Thus we deduce by \eqref{napoli 2} that 
\begin{multline*}
\| \C (U_2 + U_1) : (U_2 -  U_1)\|_{L^2(\Sigma)}  \leq    \| \C (U_2 + U_1) : \big( ( U_2 -  U_1) (I - \nu \otimes \nu)\big) \|_{L^2(\Sigma)}  \\
 +\e \| h_2 - h_1 \|_{H^2(\Sigma)}   + C\| h_2 - h_1 \|_{H^1(\Sigma)}. 
\end{multline*}
The inequality \eqref{napoli} then follows from \eqref{cov tang} as 
\[
\begin{split} 
& \|\C (U_2 + U_1) :  \big(( U_2 -  U_1) (I - \nu \otimes \nu)\big) \|_{L^2(\Sigma)}  \\
&=\| \C (U_2 + U_1) :  \big(( D u_2(\cdot, t) \circ{\pi^{-1}_{F_{t,2}}}- D u_1(\cdot, t) \circ{\pi^{-1}_{F_{t,1}}})  (I - \nu \otimes \nu )\big) \|_{L^2(\Sigma)}\\
&\leq C \| \big(D u_2 (\cdot, t)  \circ \pi^{-1}_{F_{t,2}} - D u_1(\cdot, t)  \circ \pi^{-1}_{F_{t,1}}\big)_\tau\|_{L^2(\Sigma)}\\
&\leq C\|\big[(D u_2 (\cdot, t)  \circ \pi^{-1}_{F_{t,2}})D\pi_{F_{t,2}}^{-1} - (D u_1(\cdot, t)  \circ \pi^{-1}_{F_{t,1}})D\pi_{F_{t,1}}^{-1}\big]_\tau\|_{L^2(\Sigma)}\\
&\quad+C\|\big[(D u_2 (\cdot, t)  \circ \pi^{-1}_{F_{t,2}})(D\pi_{F_{t,2}}^{-1} - D\pi_{F_{t,1}}^{-1})\big]_\tau\|_{L^2(\Sigma)}\\
&\quad+C\|\big[(D u_2 (\cdot, t)  \circ \pi^{-1}_{F_{t,2}}-D u_1 (\cdot, t)  \circ \pi^{-1}_{F_{t,1}})(I - D\pi_{F_{t,1}}^{-1})\big]_\tau\|_{L^2(\Sigma)}\\
&\leq  C\| \nabla ( u_2 (\cdot, t)  \circ \pi^{-1}_{F_{t,2}})- \nabla (u_1(\cdot, t)  \circ \pi^{-1}_{F_{t,1}})\|_{L^2(\Sigma)} + C \| h_2 - h_1 \|_{H^1(\Sigma)}+\e \| h_2 - h_1 \|_{H^2(\Sigma)},
\end{split}
\]
where in the last inequality we used the second estimate in Remark~\ref{rem after long} and the fact that the $C^1$-norm of $h_1$ is small.

We proceed  by using  \eqref{napoli} and interpolation to deduce 
\begin{align*}
&\|Q(E(u_2(\cdot, t)))\circ{\pi^{-1}_{F_{t,2}}}-Q(E(u_1(\cdot, t)))\circ{\pi^{-1}_{F_{t,1}}}\|_{L^2(\Sigma)} \\
&\leq C\|\nabla(u_2(\cdot, t)\!\circ\! \pi^{-1}_{F_{t,2}})-\nabla(u_1(\cdot, t)\!\circ \!\pi^{-1}_{F_{t,1}})\|^{\frac12}_{H^{\frac12}(\Sigma)}
\|\nabla(u_2(\cdot, t)\!\circ \!\pi^{-1}_{F_{t,2}})-\nabla(u_1(\cdot, t)\!\circ\! \pi^{-1}_{F_{t,1}})\|^{\frac12}_{H^{-\frac12}(\Sigma)}\\
&\quad+\e \| h_2 - h_1 \|_{H^2(\Sigma)}  +C\|h_2(\cdot, t)-h_1(\cdot, t)\|_{H^1(\Sigma)}.
\end{align*}
By  the estimate  \eqref{seconda} in Theorem~\ref{linearestimate} we have 
\[
\begin{split} 
\|\nabla(u_2(\cdot, t)&\circ \pi^{-1}_{F_{t,2}})-\nabla(u_1(\cdot, t)\circ \pi^{-1}_{F_{t,1}})\|_{H^{\frac12}(\Sigma)}\\
&\leq \| u_2(\cdot, t)\circ \pi^{-1}_{F_{t,2}}- u_1(\cdot, t)\circ \pi^{-1}_{F_{t,1}}\|_{H^{3/2}(\Sigma)} \leq C\|h_2(\cdot, t)-h_1(\cdot, t)\|_{H^2}.
\end{split}
\]
Moreover by using the well-known inequality  $\|\nabla g\|_{H^{-\frac12}(\Sigma)}\leq C \|g\|_{H^{\frac12}(\Sigma)}$ and  Remark~\ref{rem after long}
we have
\begin{multline*}
\|\nabla(u_2(\cdot, t)\circ \pi^{-1}_{F_{t,2}})-\nabla(u_1(\cdot, t)\circ \pi^{-1}_{F_{t,1}})\|_{H^{-\frac12}(\Sigma)} \\ \leq C
\|u_2(\cdot, t)\circ \pi^{-1}_{F_{t,2}}-u_1(\cdot, t)\circ \pi^{-1}_{F_{t,1}}\|_{H^{\frac12}(\Sigma)}
\leq C\|h_2(\cdot, t)-h_1(\cdot, t)\|_{H^1(\Sigma)}.
\end{multline*}
Collecting the previous three inequalities, using standard interpolation
\[
\|h_2(\cdot, t)-h_1(\cdot, t)\|_{H^1(\Sigma)} \leq C \|h_2(\cdot, t)-h_1(\cdot, t)\|_{H^2(\Sigma)}^{1/2}\|h_2(\cdot, t)-h_1(\cdot, t)\|_{L^2(\Sigma)}^{1/2}, 
\]
and by  Young's inequality we obtain
\begin{multline*}
\|Q(E(u_2(\cdot, t)))\circ{\pi^{-1}_{F_{t,2}}}-Q(E(u_1(\cdot, t)))\circ{\pi^{-1}_{F_{t,1}}}\|^2_{L^2} \\
\leq 2\e\|\nabla^2(h_2(\cdot, t)-h_1(\cdot, t))\|_{L^2}^2 + C_\e\|h_2(\cdot, t)-h_1(\cdot, t)\|_{L^2}^2.
\end{multline*}
Integrating  the previous inequality over $(0, T)$ and using  \eqref{boh2} and \eqref{bohh}, we obtain
\[
\begin{split}
\int_0^{T} \|Q(E(u_2(\cdot, t)))&\circ{\pi^{-1}_{F_{t,2}}}-Q(E(u_1(\cdot, t)))\circ{\pi^{-1}_{F_{t,1}}}\|^2_{L^2}dt\\
 &\leq \big((C_\e +\e C)\mu + \e C\big) \int_0^{T} \|f_2(\cdot,s) - f_1(\cdot, s)\|_{L^2}^2 \, d\Ha^1ds\\
&\leq \frac12 \int_0^{T} \|f_2(\cdot,s) - f_1(\cdot, s)\|_{L^2}^2 \, d\Ha^1ds, 
\end{split}
\]
provided that $\e$ and then $\mu$ are chosen sufficiently small. This proves \eqref{thm contraction} and we conclude that $\mathscr{L} : \mathcal{S} \to \mathcal{S}$ is a contraction with respect to the $L^2(0,T; L^2(\Sigma))$-norm. 

\medskip

\noindent {\bf Step 4.} (Conclusion) We may proceed with a standard argument, by recursively setting $f_1=0$, $f_n:=\mathcal{L}(f_{n-1})$ and  for every $n\geq 1$ letting 
$h_n$ be the solution to \eqref{forced} with $f$ replaced by $f_n$. From Step 2 and Step 3 we have that there exist $f$ and $h$ such that
$f_n\to f$ and $h_n\to h$ in $L^2(0,T; L^2(\Sigma))$. Moreover,  using \eqref{prop:Q2} and \eqref{newonw}, we conclude easily that 
 for every $n\geq 1$ the functions $h_n$ satisfy \eqref{solfa} and \eqref{solfa2} for every $k\in \N$, with constants depending only on $k$ and $K_0$.   
 Thus,  we have that $h_n\wto h$ weakly in $H^1(0, T; H^1(\Sigma))\cap L^{\infty}(0, T; H^3(\Sigma))$. Moreover using the equation satisfied by $h_n$ and \eqref{newonw} we also have that $\pa_t h_n$ is bounded in 
 $L^2_{loc}(0, T; H^k(\Sigma))$ for every $k\in \N$. Therefore we have that  $h_n\wto h$ weakly in $H^1_{loc}(0, T; H^k(\Sigma))$  and thus strongly in $L^2_{loc}(0,T; H^k(\Sigma))$ and that $h$ satisfies \eqref{solfa} and \eqref{solfa2}. Using these convergences one can easily pass to the limit in the equations satisfied by the $h_n$'s to conclude that $h$ is a solution of \eqref{FLOW}. The uniqueness follows from the same argument used in Step 2 and Step 3.

\end{proof}

\section{Asymptotic stability}\label{sec:stability}

In this section we study the flow when the initial set is close to a smooth strictly stable stationary set $G$, which will be our reference set, i.e., we set $\Sigma = \pa G$.
Throughout this section we denote
$$
R_t: =  H_{F_t}  - Q(E(u_{F_t})). 
$$
Moreover, in what follows we shall drop the subscript $\pa F_t$ (and similar) in all the covariant differential operators, when no danger of confusion arises. 
Here is the main result. 

\begin{theorem}
\label{thmstability}
Let $G \subset\subset\Om$ be a regular strictly stable stationary set   in the sense of Definition~\ref{def:stable}. There exists $\delta>0$ such that if $F_0 \in \mathfrak{h}^3_{\delta}(\Sigma)$,  then the unique solution $(F_t)_{t>0}$ of the flow \eqref{FLOW} with intial datum $F_0$ is defined for all times $t>0$. 

Moreover $F_t \to F_{\infty}$  exponentially fast, where $F_\infty$ is the 
unique stationary set near  $G$  such that $|F_{\infty, i}|=|F_{0,i}|$ for $i=1,\dots, m$.  In particular, if $|F_{0,i}|=|G_i|$ for $i=1, \dots, m$, then $F_t \to G$  exponentially fast. Here $G_i$ denote the open bounded sets enclosed by the components 
$\Gamma_{G,1}, \dots, \Gamma_{G,m}$ of $\pa G$,  $F_{\infty, i}$ and $F_{0,i}$ are diffeomorphic to $G_i$, and $\pa F_{0, i}$ and $\pa F_{\infty, i}$ are the components of $\pa F_0$ and $\pa F_\infty$ respectively.

\end{theorem}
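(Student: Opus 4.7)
The plan is to introduce the Lyapunov functional
$$\Phi(t) := \int_{\pa F_t} |\nabla R_t|^2 \, d\Ha^2, \qquad R_t := H_{F_t} - Q(E(u_{F_t})),$$
and show that, as long as the flow remains in a small $H^3$-neighborhood of $G$, $\Phi$ decays exponentially. A continuation argument based on Theorem~\ref{thm surf} then promotes short-time existence to global existence, and exponential convergence to a stationary limit follows by compactness and Proposition~\ref{stationary}. More precisely, for $\delta$ small the flow exists on some initial interval; let $T^\ast>0$ be the maximal time such that $\sup_{[0,T^\ast)}\|h_{F_t}\|_{H^3(\Sigma)} \leq \sigma_0$, where $\sigma_0$ is the constant from Lemma~\ref{lemma:j2>0near}. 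The goal is to show $T^\ast = +\infty$.

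The core of the argument is the differentiation of $\Phi$. Since $V_t = \Delta R_t$ has zero integral on each connected component of $\pa F_t$, the variation $\psi := \Delta R_t$ belongs to $\tilde H^1(\pa F_t)$, so the energy identity (Proposition~\ref{magic formula}) should yield, schematically,
$$\frac{d}{dt}\Phi(t) = -2\,\pa^2\mathcal{J}(F_t)[\Delta R_t] + \mathcal{R}(t),$$
where the remainder $\mathcal{R}(t)$ consists of cubic terms in $\nabla R_t$ and $B_{F_t}$ controlled by the Sobolev estimates of Theorem~\ref{thm surf} and the elliptic estimate of Theorem~\ref{linearestimate}. The stability Lemma~\ref{lemma:j2>0near} gives
$$\pa^2\mathcal{J}(F_t)[\Delta R_t] \geq \tfrac{c_0}{2}\|\Delta R_t\|^2_{\tilde H^1(\pa F_t)}.$$
On the other hand, integration by parts together with the Poincar\'e inequality applied to $R_t-\lambda_i$ (where $\lambda_i$ is the mean of $R_t$ on $\Gamma_{F_t,i}$) yields
$$\|\nabla R_t\|_{L^2}^2 = -\int (R_t-\lambda_i)\,\Delta R_t \,d\Ha^2 \leq C\|\nabla R_t\|_{L^2}\|\Delta R_t\|_{L^2},$$
so $\Phi(t) \leq C\|\Delta R_t\|^2_{L^2} \leq C\|\Delta R_t\|^2_{H^1}$. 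The plan is then to prove a key auxiliary inequality of the form $\|\nabla R_t\|_{L^2}^2 \leq C\|h_{F_t}\|_{H^3}^{\theta}$ (this is the inequality labelled \eqref{avain} in the statement of Proposition~\ref{stationary}) that allows us to absorb $\mathcal{R}(t)$ into the dissipation term provided $\delta$ is small. This gives
$$\frac{d}{dt}\Phi(t) \leq -c_1\Phi(t), \qquad \text{hence} \qquad \Phi(t) \leq e^{-c_1 t}\Phi(0).$$

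Next, I would combine this $L^2$-type exponential decay with the higher-order estimates \eqref{solfa2} to upgrade the convergence. Applying Theorem~\ref{thm surf} on translated time intervals $[t,t+1]$, Sobolev interpolation turns exponential decay of $\|\nabla R_t\|_{L^2}$ into exponential decay of $\|R_t - \lambda_i(t)\|_{C^k(\pa F_t)}$ on each component for every $k$. Because $R_t = H_{F_t} - Q(E(u_{F_t}))$ controls two derivatives of the height function $h_{F_t}$ modulo the lower-order elastic term (bounded in $H^{k-3/2}$ by Theorem~\ref{linearestimate}), this translates into $h_{F_t} \to h_\infty$ exponentially in every $C^k$-norm, defining a limit set $F_\infty$. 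In particular, $h_{F_t}$ stays in an $H^3$-ball of radius $\sigma_0/2$ once $\delta$ is taken small enough, ruling out $T^\ast<+\infty$.

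Finally, to identify the limit: since $\nabla R_t\to 0$ uniformly, $R_\infty$ is locally constant on $\pa F_\infty$, so $F_\infty$ is stationary in the sense of Definition~\ref{def stationarity}. Moreover, the flow preserves the volume of each component $F_{t,i}$ (as $\int_{\Gamma_{F_t,i}}V_t\,d\Ha^2 = 0$), so $|F_{\infty,i}| = |F_{0,i}|$, and Proposition~\ref{stationary} singles out $F_\infty$ uniquely in an $H^3$-neighborhood of $G$; if $|F_{0,i}|=|G_i|$ for every $i$, then $F_\infty = G$. The hardest step is the derivation of the Lyapunov inequality: besides requiring the delicate identity in Proposition~\ref{magic formula}, it requires the precise linear estimates of Theorem~\ref{linearestimate} in order to bound the elastic contributions to $\mathcal{R}(t)$ by the dissipation rather than by the full $H^5$-norm of $h_{F_t}$.
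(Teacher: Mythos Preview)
Your proposal follows the paper's strategy: differentiate the Lyapunov functional $\Phi(t)=\int_{\pa F_t}|\nabla R_t|^2$ via Proposition~\ref{magic formula}, use Lemma~\ref{lemma:j2>0near} to produce dissipation $-c_0\|\Delta R_t\|_{H^1}^2$, absorb the cubic remainder by an interpolation inequality (Lemma~\ref{nasty} in the paper), and conclude exponential decay of $\Phi$; then upgrade via the higher-order estimates \eqref{solfa2} and identify the limit through Proposition~\ref{stationary}.

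There is, however, one structural gap in your continuation argument. Exponential decay of $\Phi(t)$ alone does not close the bootstrap $\|h_{F_t}\|_{H^3}\leq\sigma_0$. The two-sided estimate \eqref{avain} actually reads
\[
\tfrac{1}{C}\|h_F\|_{H^3}^{1/\theta}\ \leq\ D(F)+\int_{\pa F}|\nabla R_F|^2\ \leq\ C\|h_F\|_{H^3}^{\theta},\qquad D(F):=\int_{F\Delta G}\mathrm{dist}(x,\Sigma)\,dx\sim\|h_F\|_{L^2}^2,
\]
so the low-order quantity $D(F_t)$ must be controlled \emph{separately}. The paper does this by a direct computation: $\frac{d}{dt}D(F_t)=\int_{\pa F_t}d_G\,\Delta R_t=-\int_{\pa F_t}\langle\nabla d_G,\nabla R_t\rangle\leq C\Phi(t)^{1/2}$, which after integrating the exponential decay of $\Phi$ gives $D(F_t)\leq C\delta^{\theta/2}$ uniformly in $t$; only then does the left inequality in \eqref{avain} force $\|h_{F_t}\|_{H^3}$ to stay below $\sigma_0$. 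Your route---deducing the $H^3$-bound from the $C^k$-convergence $h_{F_t}\to h_\infty$---is circular, since that convergence already presupposes global existence. A second, minor point: the remainder $\mathcal R(t)$ in Proposition~\ref{magic formula} is purely geometric (only $B_{F_t}$, $H_{F_t}$, $\nabla R_t$, $\Delta R_t$ appear), so Theorem~\ref{linearestimate} is not used there; it enters instead in the proof of \eqref{avain}, where one compares $Q(E(u_F))$ with $Q(E(u_G))$.
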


\begin{remark}\label{rm:precise}
 By  exponential convergence of $F_t$ to $F_\infty$ we mean precisely the following: 
writing $\pa F_t:=\{x+\tilde h(x, t)\nu_{F_\infty}(x):x\in \pa F_\infty\}$, we have that for every $k \in \mathbb{N}$ there exists $c_k >0$ and $C_k >1$ such that  
\[
\|\tilde h(\cdot, t)\|_{C^k(\pa F_\infty)} \leq C_k e^{-c_k t}
\]
for  $t \geq 1$.
\end{remark}

The proof of stability is based on the following energy identity.

\begin{proposition} 
\label{magic formula}
Let  $(F_t)_{t \in [0,T)}$ be the solution of  \eqref{FLOW} provided by Theorem~\ref{thm surf}.  Then the function 
$$
t\mapsto \int_{\pa F_t} |\nabla R_t|^2  \, d \Ha^2 
$$
is absolutely continuous and 
for almost every $t \in (0,T)$ we have the following energy identity 
\begin{multline}\label{magic}
\frac{d}{dt} \left(\int_{\pa F_t} |\nabla R_t|^2  \, d \Ha^2 \right) = -  2 \pa^2 \mathcal{J}(F_t)[\Delta R_t]
\\- 2 \int_{\pa F_t} B_{F_t}[\nabla R_t, \nabla  R_t] \, (\Delta R_t)  \, d \Ha^{2} +  \int_{\pa F_t} H_{F_t} |\nabla R_t|^2  \, (\Delta R_t)  \, d \Ha^{2},
\end{multline}
 where $\pa^2 \mathcal{J}(F_t)$ is defined as in \eqref{eq:pa2J}.  
\end{proposition}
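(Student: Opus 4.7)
The starting point is to differentiate $t\mapsto\int_{\partial F_t}|\nabla R_t|^2 \, d\mathcal{H}^2$ directly along the flow. By the Leibniz rule for integrals on a hypersurface evolving with normal velocity $V_t=\Delta R_t$,
\[
\frac{d}{dt}\int_{\partial F_t}|\nabla R_t|^2 \, d\mathcal{H}^2 = \int_{\partial F_t}\partial_t^\circ|\nabla R_t|^2 \, d\mathcal{H}^2 + \int_{\partial F_t}H_{F_t}|\nabla R_t|^2 \Delta R_t \, d\mathcal{H}^2,
\]
where $\partial_t^\circ$ denotes the normal time derivative. The second term already reproduces the last term of \eqref{magic}.

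Next, I expand $|\nabla R_t|^2=g^{ij}\partial_i R_t\partial_j R_t$ in local coordinates. Using that the inverse metric evolves by $\partial_t^\circ g^{ij}=-2V_t B_{F_t}^{ij}$ under a normal flow, and that $\partial_t^\circ$ commutes with tangential partial derivatives of a scalar, one obtains
\[
\partial_t^\circ|\nabla R_t|^2 = -2V_t \, B_{F_t}[\nabla R_t,\nabla R_t] + 2\langle\nabla R_t,\nabla(\partial_t^\circ R_t)\rangle.
\]
Integrating the inner-product term by parts and using $\Delta R_t=V_t$ yields
\[
\frac{d}{dt}\int|\nabla R_t|^2 \, d\mathcal{H}^2 = -2\int B_{F_t}[\nabla R_t,\nabla R_t]\Delta R_t \, d\mathcal{H}^2 + \int H_{F_t}|\nabla R_t|^2\Delta R_t \, d\mathcal{H}^2 - 2\int V_t \, \partial_t^\circ R_t \, d\mathcal{H}^2,
\]
so the proposition reduces to showing $-2\int V_t \, \partial_t^\circ R_t \, d\mathcal{H}^2 = -2\, \partial^2\mathcal{J}(F_t)[V_t]$.

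Writing $R_t=H_{F_t}-Q(E(u_{F_t}))$, the classical normal variation of the mean curvature $\partial_t^\circ H_{F_t}=-\Delta V_t-|B_{F_t}|^2 V_t$ together with one integration by parts produces precisely $-2\int\!\bigl(|\nabla V_t|^2-|B_{F_t}|^2 V_t^2\bigr)\, d\mathcal{H}^2$, matching the first two terms in the definition \eqref{eq:pa2J} of $\partial^2\mathcal{J}$. For the elastic contribution, I decompose $\partial_t^\circ Q(E(u_{F_t}))$ into a convective piece $V_t\,\partial_{\nu_{F_t}}Q(E(u_{F_t}))$, which after multiplying by $V_t$ and integrating reproduces the last term in \eqref{eq:pa2J}, and a bulk piece coming from the shape derivative of $u_{F_t}$. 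For the latter, I invoke the weak characterization \eqref{eq u dot2} of $u_{V_t}$ tested against $u_{V_t}$ itself, combined with the natural boundary condition $\mathbb{C} E(u_{F_t})[\nu_{F_t}]=0$ and the identity $2Q(A)=\mathbb{C} A:A$, to identify the contribution of the bulk piece with $2\int_{\Omega\setminus F_t}Q(E(u_{V_t}))\, dx$. This yields the remaining term in \eqref{eq:pa2J}.

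The absolute continuity of $t\mapsto\int|\nabla R_t|^2\, d\mathcal{H}^2$ follows from the smoothness of the flow for $t>0$ guaranteed by Theorem \ref{thm surf}, together with the integrable higher-order bounds \eqref{solfa2} that control the quantities appearing when $t\to 0^+$. The main obstacle is the identification of the bulk part of the shape derivative of $Q(E(u_{F_t}))$ with $2\int Q(E(u_{V_t}))$: this requires carefully tracking how the linearized Lamé system inherits its boundary datum from the motion of $\partial F_t$, and it is precisely the point where the specific variational structure of the elastic term enters. It is essentially a dynamical reprise of the derivation of the second variation formula \eqref{eq:J''}, but performed without assuming a divergence-free extension of the velocity field, which is why only the two quadratic-form terms appearing in $\partial^2\mathcal{J}(F_t)$ (and no contribution from a tangential correction term) remain after the calculation.
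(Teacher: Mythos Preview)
Your proof is correct and follows the same strategy as the paper: differentiate along the purely normal velocity $V_t=\Delta R_t$, separate off the $H_{F_t}|\nabla R_t|^2\Delta R_t$ and $B_{F_t}[\nabla R_t,\nabla R_t]\,\Delta R_t$ terms, and identify what remains with $-2\,\partial^2\mathcal J(F_t)[\Delta R_t]$ by combining the evolution formula for $H_{F_t}$ with the fact that $\dot u_t$ coincides with the solution $u_{\Delta R_t}$ of \eqref{eq u dot2}, exploited together with the Neumann condition $\mathbb C E(u_t)[\nu_t]=0$. The only difference is packaging: you compute intrinsically via the normal time derivative $\partial_t^\circ$ and the metric evolution $\partial_t^\circ g^{ij}=-2V_tB^{ij}$, whereas the paper extends $\nu_t$, $H_t$, $R_t$ to a tubular neighborhood via the signed distance and differentiates $D_{\tau_{t+s}}R_{t+s}\circ\Phi_s$ in the ambient $\mathbb R^3$. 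Your route is a bit more direct, since it bypasses the intermediate terms involving $\dot\nu_t$, $D^2R_t$ and $(DR_t\cdot\nu_t)$ that the paper produces and then reabsorbs by integration by parts; the paper's extrinsic route, on the other hand, makes the decomposition $\partial_t^\circ=\dot{(\cdot)}+V_t\,\partial_{\nu_t}$ explicit for all quantities at once, which keeps the elastic bookkeeping transparent. One point you should state rather than leave implicit: the identification $\dot u_t=u_{V_t}$, i.e.\ that the time derivative of the elastic equilibrium solves \eqref{eq u dot2} with $\psi=\Delta R_t$; the paper derives this (citing \cite{Bo}) by differentiating the weak formulation of \eqref{uf} in $t$, and this is precisely the ``inherited boundary datum'' step you allude to in your last paragraph.
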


The proof of the proposition is similar to \cite[Proposition 4.3]{surf2D} (see also \cite[Lemma~4.4]{AFJM}) and therefore we shift it to the appendix.

In order to control the two last terms in \eqref{magic} we  need the following interpolation result on the evolving boundaries. The proof of the next lemma is precisely the same as \cite[Lemma 4.7]{AFJM} and therefore we omit it. 
\begin{lemma}
\label{nasty}
If $F \subset  U$ is such that  $\pa F = \{ x + h_F(x) \nu(x) : x \in \Sigma\}$ with $\|h_F\|_{C^{1,\alpha}(\Sigma)} \leq M$,    then for every smooth function $f \in C^\infty(\pa F)$ it holds
\[
\begin{split}
\int_{\partial F} |B_{ F}| |\nabla  f|^2 |\Delta  f| \, d \Ha^{2} \leq C \, \left(1+ \|H_{F}\|_{L^6(\pa F)}^3 \right) \| \nabla  \Delta f\|_{L^2(\pa F)}^2 \,  \|\nabla f \|_{L^2(\pa F)}\,.
\end{split}
\]
The constant $C$ depends only on $M$ and $\Sigma$. 
\end{lemma}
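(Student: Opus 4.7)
The plan is to adapt the argument of \cite[Lemma~4.7]{AFJM} to the present three-dimensional ambient / two-dimensional surface setting. The proof combines three ingredients: H\"older's inequality to split the integrand; an $L^p$-control of the full second fundamental form $B_F$ by the mean curvature $H_F$, exploiting the graph representation of $\pa F$ over $\Sigma$; and Gagliardo--Nirenberg interpolation on the closed $2$-manifold $\pa F$. Thanks to the uniform $C^{1,\alpha}$-bound on $h_F$, all constants depend only on $M$ and $\Sigma$.

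First, H\"older on $\pa F$ with exponents $(6,6,2)$ gives
\[
\int_{\pa F} |B_F|\,|\nabla f|^2\,|\Delta f| \,d\Ha^2 \leq \|B_F\|_{L^6(\pa F)}\, \|\nabla f\|_{L^6(\pa F)}^2\, \|\Delta f\|_{L^2(\pa F)}.
\]
For the $B_F$-factor I would use \eqref{mean curvature}, which reads $H_F\circ \pi_{F_h}^{-1} = -\Delta h_F + \la A(\cdot,h_F,\nabla h_F),\nabla^2 h_F\ra + a(\cdot, h_F, \nabla h_F)$, together with the fact that $B_F$ depends on $\nabla^2 h_F$ and lower-order quantities in $(h_F,\nabla h_F)$. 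Since $\|h_F\|_{C^{1,\alpha}(\Sigma)}\leq M$, the tensor $A$ is bounded and the equation for $h_F$ is an elliptic perturbation of the Laplace--Beltrami operator on $\Sigma$, so standard $L^p$-Calder\'on--Zygmund theory on $\Sigma$ yields
\[
\|B_F\|_{L^p(\pa F)} \leq C\bigl(1+\|H_F\|_{L^p(\pa F)}\bigr) \quad \text{for every } 1<p<\infty,
\]
with $C=C(M,\Sigma,p)$.

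For the function-theoretic factors, I would use 2D Gagliardo--Nirenberg (Lemma~\ref{aubinlemma}) to interpolate $\|\nabla f\|_{L^6}$ and $\|\nabla f\|_{L^3}$ between $\|\nabla^2 f\|_{L^2}$ and $\|\nabla f\|_{L^2}$, together with the Poincar\'e-type bound $\|\Delta f\|_{L^2(\pa F)}\leq C\|\nabla \Delta f\|_{L^2(\pa F)}$ coming from the identity $\int_{\pa F}\Delta f \,d\Ha^2=0$ on the closed surface. The key step is to control $\|\nabla^2 f\|_{L^2}^2$ via Remark~\ref{rem aub 1}, namely $\|\nabla^2 f\|_{L^2}^2 \leq \|\Delta f\|_{L^2}^2 + C\int_{\pa F} |B_F|^2 |\nabla f|^2 \,d\Ha^2$, and to estimate the last integral by H\"older as $\|B_F\|_{L^6}^2 \|\nabla f\|_{L^3}^2$. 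This brings in two extra factors of $\|B_F\|_{L^6}$ which, combined with the one from the initial H\"older splitting and balanced against the Hessian terms via Young's inequality, produce the desired cubic dependence on $\|B_F\|_{L^6}$; the step $\|B_F\|_{L^6}\leq C(1+\|H_F\|_{L^6})$ then yields the announced $1+\|H_F\|_{L^6}^3$. Lemma~\ref{interchange 1} is finally used to replace $\|\nabla^3 f\|_{L^2}$ by $\|\nabla \Delta f\|_{L^2}$, modulo lower-order terms absorbed by Young's inequality.

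The main obstacle is the careful bookkeeping of interpolation exponents so that, after iterating Remark~\ref{rem aub 1}, the powers of $\|\nabla \Delta f\|_{L^2}$ and $\|\nabla f\|_{L^2}$ come out exactly to $2$ and $1$ respectively, leaving the residual dependence on $H_F$ as the stated cube $\|H_F\|_{L^6}^3$. This computation mirrors that in \cite[Lemma~4.7]{AFJM}; the remaining lower-order contributions arising from the non-linearities in the graph parametrization of $\pa F$ are absorbed using Young's inequality with small parameters.
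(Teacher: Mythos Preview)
Your outline is essentially the argument the paper defers to (it omits the proof and cites \cite[Lemma~4.7]{AFJM} verbatim): H\"older with exponents $(6,6,2)$, the Calder\'on--Zygmund bound $\|B_F\|_{L^6}\le C(1+\|H_F\|_{L^6})$ coming from \eqref{mean curvature} and the uniform $C^{1,\alpha}$-control on $h_F$, Gagliardo--Nirenberg on the $2$-surface for $\|\nabla f\|_{L^6}$ and $\|\nabla f\|_{L^3}$, and the curvature-explicit Bochner identity of Remark~\ref{rem aub 1} to close the loop. The Poincar\'e constants on $\pa F$ are indeed uniform under the $C^{1,\alpha}$-bound, and the identity $\|\Delta f\|_{L^2}^2\le\|\nabla f\|_{L^2}\|\nabla\Delta f\|_{L^2}$ together with $\|\nabla f\|_{L^2}\le C\|\Delta f\|_{L^2}\le C\|\nabla\Delta f\|_{L^2}$ lets you convert all the leftover powers into $\|\nabla\Delta f\|_{L^2}^2\|\nabla f\|_{L^2}$.

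One caveat: your final step invoking Lemma~\ref{interchange 1} to pass from $\|\nabla^3 f\|_{L^2}$ to $\|\nabla\Delta f\|_{L^2}$ is both unnecessary and, as written, problematic. The constant in Lemma~\ref{interchange 1} depends on the Riemann tensor of the manifold, hence on $|B_F|^2$, which is \emph{not} uniformly bounded here; the ``lower-order terms'' would themselves carry uncontrolled curvature factors. Fortunately $\nabla^3 f$ never needs to appear: interpolating only up to $\|\nabla^2 f\|_{L^2}$, bounding that via Remark~\ref{rem aub 1}, and using the Poincar\'e chain above already delivers the estimate with the cubic $\|H_F\|_{L^6}^3$ factor. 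Drop the reference to Lemma~\ref{interchange 1} and the bookkeeping closes cleanly.
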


We are now ready to prove Theorem  \ref{thmstability}.

\begin{proof}[\textbf{Proof of Theorem \ref{thmstability}.}]
 
 For any set $F \in \mathfrak{h}_{1}^3(\Sigma)$  consider  
$$
 D(F):=\int_{F\Delta G}\mathrm{dist\,}(x, \Sigma)\, dx
$$
 and note that 
 \beq\label{L2 estimate}
\frac{1}{C} \|h_F\|_{L^2(\pa G)}^2  \leq  D(F)  \leq C \|h_F\|_{L^2(\pa G)}^2
 \eeq
for a constant  depending only on $G$.  Moreover,  we define 
\[
R_F := H_F - Q(E(u_F))  
\]
which is defined on $\pa F$. 

\medskip 

\noindent {\bf Step 1.}{\it (Preliminary estimates)}  In this step we  show  that if $F \in  \mathfrak{h}_{1}^3(\Sigma)$  and $\|h_F\|_{C^1(\Sigma)}\leq \de$ for $\de$ sufficiently small, then  it holds 
\beq \label{avain}
\frac{1}{C}\,  \| h_F \|_{H^3(\Sigma)}^{1/\theta} \leq  D(F)  + \int_{\pa F} |\nabla R_F|^2 \, d\Ha^2 \leq C \,  \| h_F \|_{H^3(\Sigma)}^\theta.
\eeq
for $\theta \in (0,1)$ and  for constant  $C>1$. 

We begin by proving the first inequality. We  use   interpolation,   \eqref{prima} and the  second inequality in Remark \ref{rem after long} to  deduce that 
\beq \label{step 0 elastic}
\begin{split}
&\| \nabla \big(Q(E(u_F))\circ\pi_F^{-1} - Q(E(u_G))\big)  \|_{L^2(\Sigma)} \\
&\leq C\|Q(E(u_F))\circ\pi_F^{-1} - Q(E(u_F))\circ \pi_F^{-1}  \|_{H^{\frac32}(\Sigma)}^{\theta'}  \|Q(E(u_F)) \circ \pi_F^{-1} - Q(E(u_G))  \|_{L^2(\Sigma)}^{1-\theta'} \\
&\leq (C  + \|Q(E(u_F))\circ \pi_F^{-1}  \|_{H^{3/2}(\Sigma)}^{\theta'})  \| (D u_F) \circ \pi_F^{-1} -  (D u_G)  \|_{L^2(\Sigma)}^{1-\theta'} \\
&\leq (C + \|h_F\|_{H^{3}(\Sigma)}^{\theta'}) \|h_F\|_{H^{2}(\Sigma)}^{1-\theta'}\\
&\leq C  \|h_F\|_{H^{2}(\Sigma)}^{1-\theta'}
\end{split}
\eeq
for $\theta' \in (0,1)$.  Since $G$ is a stationary set it holds $\nabla R_G = 0$ on $\Sigma$. Therefore  we conclude by the above inequality that 
\[
\begin{split}
\| \nabla \big(H_F \circ \pi_F^{-1} - &H_G\big) \|_{L^2(\pa F)}^2 \\
&\leq  2\int_{\Sigma} |\nabla (R_F\circ \pi_F^{-1})|^2 \, d\Ha^2  +2 \| \nabla \big(Q(E(u_F))\circ\pi_F^{-1} -  Q(E(u_G)) \big) \|_{L^2(\Sigma)}^2 \\
&\leq 2C\int_{\pa F} |\nabla R_F|^2 \, d\Ha^2  + C \|h_F\|_{H^{2}(\Sigma)}^{2(1-\theta')}.
\end{split}
\]    
We use \eqref{schifio2}, \eqref{mean curvature} and the fact that $\|h_F\|_{C^1(\Sigma)}\leq \de$ to deduce with straightforward calculations  
\[
 \| h_F \|_{H^3(\Sigma)}^2 \leq C \| \nabla (H_F \circ \pi_F^{-1} -  H_G) \|_{L^2(\Sigma)}^2 + C  \| h_F \|_{H^2(\Sigma)}^2 .
\]
Therefore, from the two previous inequalities and  by interpolation we  obtain  that 
\[
\begin{split}
 \| h_F \|_{H^3(\Sigma)}^2 &\leq C \int_{\pa F} |\nabla R_F|^2 \, d\Ha^2  + C \|h_F\|_{H^{2}}^{2(1-\theta')} + C \| h_F \|_{H^2}^2 \\
&\leq C \int_{\pa F} |\nabla R_F|^2 \, d\Ha^2 + \frac12\| h_F \|_{H^3}^2 + C  \| h_F \|_{L^2}^{\theta''},
 \end{split}
\]
for a suitable $\theta''\in (0,1)$.  The first inequality in \eqref{avain} then follows from the previous  the previous estimate and from \eqref{L2 estimate}, recalling that since $\|h_F\|_{H^3(\Sigma)}\leq 1$ we have also $\|\nabla R_F\|_{L^2(\pa F)}\leq C$.

To prove the second inequality in \eqref{avain} we argue similarly as  above and use \eqref{mean curvature} to conclude that 
\[
\| \nabla (H_F \circ \pi_F^{-1} - H_G) \|_{L^2(\Sigma)}^2 \leq  C \, \| h_F \|_{H^3(\Sigma)}^2
\]
Moreover by   \eqref{step 0 elastic} we have that 
\[
\| \nabla \big(Q(E(u_F))\circ\pi_F^{-1} -   Q(E(u_G))\big)  \|_{L^2(\Sigma)} \leq C \,  \|h_F\|_{H^{2}(\Sigma)}^{1-\theta'}
\]
for $\theta' \in (0,1)$. Therefore since  $G$ is a critical set we obtain 
\[
\begin{split}
\int_{\pa F} |\nabla &R_F|^2 \, d\Ha^2  \leq C \int_{\Sigma} |\nabla (R_F \circ \pi_F^{-1} - R_G)|^2 \, d\Ha^2 \\
&\leq C \int_{\Sigma} |\nabla (H_F \circ \pi_F^{-1} - H_G) |^2 \, d\Ha^2  + C  \int_{\Sigma} |\nabla \big(Q(E(u_F)) \circ \pi_F^{-1} -  
 Q(E(u_G))\big)|^2 \, d\Ha^2 \\
&\leq C \| h_F \|_{H^3(\Sigma)}^2 +C \|h_F\|_{H^{2}(\Sigma)}^{2(1-\theta')} \leq  C  \|h_F\|_{H^{3}(\Sigma)}^{\theta} .
\end{split}
\] 
Hence, we have \eqref{avain}.

\medskip 

\noindent {\bf Step 2.}{\it (Global existence)}
Let us assume that  the initial set  $F_0 $ is in $ \mathfrak{h}_{\delta}^3(\Sigma)$ with $\delta < \e_1$, where $\e_1\in (0,1)$ is the constant provided by Theorem~\ref{thm surf} corresponding to the choice $\de_0=1$, $K_0=\max\{2,5\|Q(E(u_G))\|_{L^{\infty}(\Sigma)} \}$. Then the flow $(F_t)_{t \in [0,T)}$ starting from $F_0$ which is a solution of \eqref{FLOW} exists for a time interval $(0,T)$, with  $T$ bounded from below by a positive constant which depends only $G$. Let  $\sigma>0$  be a small number which will be chosen later.  Note that by  \eqref{avain} and by continuity we have  
\beq \label{chiave}
D(F_t)  + \int_{\pa F_t} |\nabla R_t|^2 \, d\Ha^2 \leq C\|h(\cdot, t)\|_{H^3(\Sigma)}^\theta\leq C\de^\theta< \sigma 
\eeq
for some time interval  $(0,T')$, where the last inequality holds provided that $\de$ is small enough.  
Note that by  \eqref{avain}  it follows that  
\beq\label{max}
\| h(\cdot,t) \|_{H^3(\Sigma)} < C\sigma^\theta < \min\{\e_1, \sigma_1\}\qquad\text{ for every $t \in (0,T')$}
\eeq
 when $\sigma$ is small enough, where $\sigma_1$ is the constant provided by Proposition~\ref{stationary}. In particular,  we conclude from Theorem \ref{thm surf} that  as long as the flow $(F_t)_{t \in (0,T)}$ satisfies \eqref{chiave}  it is well defined. In other words, if $(0,T^*)$ is the maximal time of existence and if  it satisfies \eqref{chiave} for every $t \in (0,T^*)$, then  $T^* = \infty$, i.e., the flow exists for all times. 

Let us denote by $[0,T')$ the maximal time interval where the flow satisfies  \eqref{chiave}.  We claim that if  $\| h_0 \|_{H^3(\Sigma)} <  \delta$ for  $\delta$ small enough, then the flow satisfies \eqref{chiave} for every $t \in (0,T^*)$ and thus $T^*=T'=+\infty$ . 

We start by recalling that by Lemma~\ref{lemma:j2>0near} and \eqref{max}, since $\sigma_1<\sigma_0$, we have
$$
\partial^2 \mathcal{J}(F_t)[\Delta R_t]\geq\frac{c_0}{2}\| \Delta R_t \|_{H^1(\pa F_t)}^2\qquad\text{ for every $t \in (0,T')$.}
$$
Thus,  from the energy identity \eqref{magic}, using also Lemma~\ref{nasty}    and  again  \eqref{chiave},   we may  estimate
\beq \label{step 1 long}
\begin{split}
\frac{d}{dt} \int_{\pa F_t} |\nabla R_t|^2  \, d \Ha^{2}  &\leq  -  2 \partial^2 \mathcal{J}(F_t)[\Delta R_t] + C \int_{\pa F_t} |B_{F_t}| |\nabla R_t|^2 \, |\Delta R_t|  \, d \Ha^{2} \\
&\leq -c_0 \| \Delta R_t \|_{H^1(\pa F_t)}^2  + C \,  (1+ \|H_{F_t}\|_{L^6(\pa F_t)}^3 ) \| \nabla  \Delta R_t\|_{L^2(\pa F_t)}^2 \,  \|\nabla R_t \|_{L^2(\pa F_t)} \\
&\leq -c_0 \| \Delta R_t \|_{H^1(\pa F_t)}^2  + C \sqrt{\sigma} \, \| \nabla  \Delta R_t\|_{L^2(\pa F_t)}^2  \\
&\leq -\frac{c_0}{2} \| \Delta R_t \|_{H^1(\pa F_t)}^2,
\end{split}
\eeq
where the last inequality holds by taking $\sigma$ smaller if needed.

Next we show that 
\beq \label{sort of poincare}
\|\nabla R_t \|_{L^2(\pa F_t)} \leq C  \| \Delta R_t \|_{L^2(\pa F_t)} 
\eeq
for some constant which depends on   $\Sigma$. Let us fix  a component  of $\pa F_t$ and denote it by $\Gamma_t$. Since $F_t$ is diffeomorphic to $G$ we denote the component of $\Sigma$
 diffeomorphic to $\Gamma_t$ by $\Gamma$. Since $\Gamma$ is smooth, compact and connected Riemannian manifold we conclude by \cite[Theorem 3.67]{AubinBook2} that the Poincar\'e inequality holds on $\Gamma$, i.e., for every $\varphi \in C^\infty(\Gamma)$ with $\int_{\Gamma} \varphi \, d \Ha^2 = 0$ it holds
\[
\| \varphi \|_{L^2(\Gamma)}  \leq C \|\nabla \varphi \|_{L^2(\Gamma)} .
\] 
Therefore since $\Gamma_t = \Phi_t(\Gamma)$ with $\Phi_t(x) = x + h(x,t) \nu(x)$ and $\| h(\cdot,t)\|_{C^{1,\alpha}} \leq  C $   the Poincar\'e inequality holds also on $\Gamma_t$. In particular, we have  
\[
\| R_t - \bar{R}_t \|_{L^2(\Gamma_t)}  \leq C \|\nabla R_t \|_{L^2(\Gamma_t)} , 
\]
where  $\bar{R}_t$ denotes the average of $R_t$ on $\Gamma_t$ and the constant depends on $\Sigma$. Then by integration by parts we get
\[
\begin{split}
\int_{\Gamma_t} |\nabla R_t|^2 \, d\Ha^2 &= - \int_{\Gamma_t} (R_t - \bar{R}_t) \Delta  R_t \, d\Ha^2   \\
&\leq \| R_t - \bar{R}_t \|_{L^2(\Gamma_t)} \|\Delta R_t \|_{L^2(\Gamma_t)} \leq C \|\nabla R_t \|_{L^2(\Gamma_t)} \|\Delta R_t \|_{L^2(\Gamma_t)}. 
\end{split}
\]
We obtain \eqref{sort of poincare} by repeating the above argument for every component of $\pa F_t$. 

By \eqref{step 1 long} and  \eqref{sort of poincare} we conclude that 
\[
\frac{d}{dt} \int_{\pa F_t} |\nabla R_t|^2  \, d \Ha^{2}   \leq - c \,  \int_{\pa F_t} |\nabla R_t|^2  \, d \Ha^{2}  
\]
for every $t \in (0, T')$. Integrating this over $(0,t)$, using \eqref{avain} and $\| h_0\|_{H^3(\Sigma)} \leq \delta$ yield
\beq \label{step 1 key estimate}
 \int_{\pa F_t} |\nabla R_t|^2  \, d \Ha^{2} \leq C e^{-c t} \delta^\theta.
\eeq
On the other hand by differentiating $D(F_t)$ with respect to time and using the same calculations as in \cite[Lemma 3.3]{surf2D} we get 
\[
\begin{split}
\frac{d}{dt} D(F_t) &= \int_{\pa F_t} d_G \, \Delta R_t \, d \Ha^{2} =-  \int_{\pa F_t}\la  \nabla d_G , \nabla R_t \ra  \, d \Ha^{2} \\
&\leq  \Ha^2(\pa F_t)^{1/2} \, \left(  \int_{\pa F_t} | \nabla R_t |^2  \, d \Ha^{2} \right)^{1/2} \leq C e^{-\frac{c}{2} t} \delta^{\frac{\theta}{2}}.
\end{split}
\]
Integrating this over $(0,t)$, using \eqref{L2 estimate}  and $\| h_0\|_{H^3(\Sigma)} \leq \delta$ yield
\beq \label{step 1 short}
D(F_t) \leq D(F_0) + C e^{-\frac{c}{2} t} \delta^{\frac{\theta}{2}} \leq C \delta^2 + C e^{-\frac{c}{2} t} \delta^{\frac{\theta}{2}}  < \sigma
\eeq
when $\delta$ is chosen small enough. Hence, we have that \eqref{chiave} holds for the whole life span of the flow $(0,T^*)$ and by the previous discussion this implies that 
$T^* = \infty$.

\medskip

\noindent {\bf Step 3.}{\it (Convergence)}  Combining \eqref{avain} and \eqref{chiave} we have that $\sup_{t>0} \| h(\cdot, t) \|_{H^3(\Sigma)} \leq C\sigma^\theta$. Therefore there exists a subsequence such that 
\[
 h(\cdot, t_m) \to    h_\infty(\cdot) \qquad \text{in } \, H^2(\Sigma).
\]
We denote the target set by $F_\infty$, i.e.,  $\pa F_\infty = \{ x + h_\infty(x) \nu(x) : x \in \Sigma \}$.
By \eqref{step 1 key estimate}  we deduce that $\nabla R_{F_\infty} = 0$, i.e., $F_\infty$ is a stationary set. We will show that $F_t \to F_\infty$ exponentially fast. 

To this aim we define
\[
D_\infty(F) :=\int_{F\Delta F_\infty}\mathrm{dist\,}(x,  F_\infty)\, dx.
\]
Repeating the calculations leading to  \eqref{step 1 short} we get
\[
\Big|\frac{d}{dt} D_\infty(F_t) \Big| = \Big| \int_{\pa F_t} d_{F_\infty} \, \Delta R_t \, d \Ha^{2} \Big| \leq  \Ha^2(\pa F_t)^{1/2} \, \left(  \int_{\pa F_t} | \nabla R_t |^2  \, d \Ha^{2} \right)^{1/2} \leq C e^{-\frac{c}{2} t} \delta^{\frac{\theta}{2}},
\]
 where the last inequality follows from \eqref{step 1 key estimate}. This implies that  $ \lim_{t \to \infty} D_\infty(F_t)$ exists and  the choice of $F_\infty$ implies that $D_\infty(F_t) \to 0$. Therefore  integrating the above inequality over $(t,\infty)$ we get
\[
D_\infty(F_t) \leq C e^{-\frac{c}{2} t} \delta^{\frac{\theta}{2}}
\]
for every $t >0$. We change the reference set from $\Sigma = \pa G$ to $\pa F_\infty$ and  write $\pa F_t = \{ x + \tilde{h}(x,t) \nu_{F_\infty}(x) : x \in \pa F_\infty \}$.  Then by inequality  \eqref{L2 estimate}, with $\pa G$ replaced by $\pa F_\infty$,   and by the above inequality we have
\[
\| \tilde{h}(\cdot,t) \|_{L^2(\pa F_\infty)} \leq C e^{-\frac{c}{4} t} \delta^{\frac{\theta}{4}}.
\]
Moreover, since $\|h(\cdot, t) \|_{H^3(\Sigma)} \leq C\sigma^\theta$ for all $t>0$ then also $\|\tilde{h}(\cdot,t) \|_{H^3(\pa F_\infty)} \leq C$ for all $t>0$. By Theorem \ref{thm surf} we  conclude that  $\|\tilde{h}(\cdot,t) \|_{H^{2k+3}(\pa F_\infty)} \leq C(k,\sigma)$ for all $t \geq 1$ and for every $k \in \mathbb{N}$.  Thus we deduce by interpolation that 
\[
 \| \tilde{h}(\cdot, t) \|_{C^k(\pa F_\infty)} \leq C_k e^{-c_k t} \qquad \text{for all }\, t \geq 1
\]
for some constants $c_k>0$ and $C_k>1$ depending on $k$ and $K_0$. 

To conclude the proof, for every $t\in [0, +\infty]$ denote by $(\Gamma_{F_t,i})_{i=1,\dots, m}$  the connected components of $\pa F_t$, 
numbered according to \eqref{numbered}. Denote also   by $F_{t,i}$ the bounded open set enclosed by $\Gamma_{F_t,i}$ and recall that 
 the flow preserves the volume of each $F_{t,i}$. Indeed, 
$$
\frac{d}{ds}|F_{t+s,i}|_{|_{s=0}}=\int_{\Gamma_{F_t, i}} V_t\, d\Ha^2=
\int_{\Gamma_{F_t, i}} \Delta R_t\, d\Ha^2=0.
$$
Thus, recalling \eqref{max} and Proposition~\ref{stationary}, we may conclude that 
$F_\infty$ is the unique stationary set in $\mathfrak{h}^{3}_{\sigma_1}(\pa G)$
 such that $|F_{\infty, i}|=|F_{0,i}|$ for $i=1,\dots, m$.

\end{proof}

\section{Evolution of epitaxially strained elastic films}\label{sec:graphs}

In this section we briefly describe how our main results read in the context of evolving periodic graphs.

In this framework, given a (sufficiently regular) non-negative  function $h:\R^2\to [0,+\infty)$, $1$-periodic with respect to  both variables $x_1, x_2$, the free energy associated with it  reads
\begin{equation}\label{functional}
\mathcal{J}(h):=\int_{\Omega_{h}}Q(E(u_h))\,dx+\Ha^2(\Gamma_{h})\,,%
\end{equation}
where $x=(x_1,x_2, x_3)\in\mathbb{R}^{2}$,  $\Gamma_{h}$, $\Omega_{h}$ denote the graph and the subgraph  of $h$, respectively,  over the periodic cell, i.e.,
\begin{align*}
&\Omega_{h}:=\{(x_1,x_2, x_3)\in(0,1)^2\times\mathbb{R}:\,0<x_3<h(x_1, x_2)\}\,, \\
&\Gamma_{h}:=\{(x_1,x_2, x_3)\in(0,1)^2\times\mathbb{R}:\, x_3=h(x_1, x_2)\},
\end{align*}
and $u_h$ is the elastic equilibrium in $\Om_h$, namely the solution of  the elliptic system
\beq\label{leiintro}
\begin{cases}
	\Div\,\C E(u_h)=0 \quad \text{in $\Om_{h}$},\\
	\C E(u_h)[\nu_{\Om_h}]=0 \quad \text{on $\Gamma_h$,}\\
	D u_h(\cdot, x_3)  \quad \text{ is $1$-periodic,}\\
	 u(x_1, x_2, 0)=e_0(x_1, x_2, 0)\,,
\end{cases}
\eeq
for a suitable fixed constant $e_0\neq 0$.  The above energy relates to a variational model for epitaxial growth, see the introduction.  Precisely,  the graph $\Gamma_h$ describes the (free) profile of the elastic film, which occupies the region $\Om_h$ and is grown on a (rigid) and much thicker substrate, while the \emph{mismatch strain} constant $e_0$ appearing in the Dirichlet condition for $u_h$ at the interface $\{x_1=0\}$ between film and substrate measures the mismatch between the characteristic atomic distances in the lattices of the two materials.   
  In this framework, the (local) minimizers of \eqref{functional} under an area constraint on $\Om_h$ describe the equilibrium configurations of epitaxially strained elastic films, see \cite{FFLM, FFLM2, FFLM3, FM09} and the references therein. 

In the context of periodic graphs, given an initial $1$-periodic profile $ h_0\in H^3_{loc}(\R^2)$ (in short $h_0\in H^3_{\rm per}\big((0,1)^2\big)$), we look for a local-in-time solution $h(\cdot, t)$ of the following problem:
\beq\label{flow2}
\begin{cases}
	\frac 1{J_t}\pa_t h=\Delta_{\Gamma_t}\left(H_t+Q(E(u_t))\right) 
	& \text{on $\Gamma_{t}$ and for all $t\in(0, T)$,}\\
		h(\cdot, t) \text{ is $1$-periodic}& \text{for all $t\in(0, T)$,}\\
	h(\cdot, 0)= h_0\,,
\end{cases}
\eeq
where  
$J_t:=\sqrt{1+|D h(\cdot, t)|^2}$,    
 $u_t$ stands for the solution of  \eqref{leiintro}, with $\Om_{h_t}$ in place of $\Om_h$,  we wrote $\Gamma_t$ instead of 
 $\Gamma_{h_t}$, and $H_t$ denotes the mean  curvature of  $\Gamma_t$.  Note that in the first equation of \eqref{flow2} we have $+Q(E(u_t))$ instead of $-Q(E(u_t))$. This is due to the fact that in 
 \eqref{functional}  the vector $\nu_{\Om_h}$ now points outwards with respect to the elastic body.

Although the setting is a bit  different from that of the previous sections,  the short-time existence theory of Section~\ref{sec:existence} clearly extends also to the present situation, with the same arguments. In this way we improve upon the results of \cite{FFLM3} at least in the case of isotropic surface energy density. 

Also the stability analysis of Section~\ref{sec:stability} applies  without any essential changes, thus showing that strictly stable stationary $1$-periodic configurations  are exponentially stable in the sense of Theorem~\ref{thmstability}. 

A particular class of critical configurations to which our stability theorem applies  are the flat configurations, that is, in the case of constants 
 profiles $h\equiv d$, provided that $d>0$ is sufficiently small. Indeed in \cite[Proposition~7.3]{Bo} it is shown that if $d$ is sufficiently small 
 then the flat configuration $h\equiv d$ is strictly stable for the functional $\mathcal{J}$.  Therefore, we may state the following theorem.

\begin{theorem}\label{th:2dliapunov}
  There exists $d_0>0$ with the following property: Let $d\in (0, d_0)$. Then, there exists $\de>0$ such that  if 
  $$
  \|h_0-d\|_{H^3((0,1)^2)}\leq \de \quad\text{and}\quad  \int_{(0,1)^2}h_0\, dx=d\,,
  $$
  then the unique solution $h(\cdot, t)$ of \eqref{flow2} exists for all $t>0$ and for every integer $k\geq 1$ we have
  $$
  \|h(\cdot, t)-d\|_{C^k([0,1]^2)}\leq C_k \mathrm{e}^{-c_k t}\quad\text{for all $t>1$}
  $$
  and for suitable positive constants $C_k, c_k$.
\end{theorem}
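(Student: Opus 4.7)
The plan is to obtain Theorem~\ref{th:2dliapunov} as a direct application of the main stability result Theorem~\ref{thmstability}, applied to the reference set
$$G_d := \{(x_1,x_2,x_3)\in (0,1)^2\times\R : 0<x_3<d\}$$
(interpreted in the $1$-periodic setting described in this section), rather than to prove anything new from scratch. As remarked at the beginning of Section~\ref{sec:graphs}, both the short-time existence theory of Section~\ref{sec:existence} and the asymptotic stability analysis of Section~\ref{sec:stability} go through in the periodic-graph framework with essentially no modifications, so Theorem~\ref{thmstability} is available here with reference set $G_d$ and reference surface $\Sigma = \Gamma_d = \{x_3=d\}$.

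First I would verify that $G_d$ is a smooth strictly stable stationary set in the sense of Definition~\ref{def:stable}. Stationarity is immediate: the mean curvature of $\Gamma_d$ vanishes, and by horizontal translation invariance the elastic equilibrium $u_{G_d}$ depends only on $x_3$, so $Q(E(u_{G_d}))$ is constant on $\Gamma_d$, which yields stationarity by Remark~\ref{rm:station}. Strict stability for all sufficiently small $d\in (0,d_0)$ is precisely the content of \cite[Proposition~7.3]{Bo}, which is where the smallness assumption on $d$ enters.

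Next, the hypotheses $\|h_0-d\|_{H^3((0,1)^2)}\leq \de$ and $\int_{(0,1)^2}h_0\,dx = d$ translate respectively into $F_0\in \mathfrak{h}^3_{C\de}(\Gamma_d)$ for a universal constant $C$, and into the area-per-cell constraint $|F_0|=|G_d|$ in the periodic cell. Since the flow \eqref{flow2} preserves this volume (as observed at the end of the proof of Theorem~\ref{thmstability}, $\frac{d}{dt}|F_t|=\int_{\Gamma_t}\Delta R_t\,d\Ha^2 = 0$ by the divergence theorem on the periodic boundary), the volume constraint propagates in time. Applying Theorem~\ref{thmstability} (periodic version) we therefore obtain global existence and exponential convergence of $F_t$ to some stationary set $F_\infty$ in $\mathfrak{h}^3_{\sigma_1}(\Gamma_d)$ with $|F_\infty|=|G_d|$; by the uniqueness Proposition~\ref{stationary}, necessarily $F_\infty=G_d$. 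Finally, the $C^k$-estimate in Remark~\ref{rm:precise} on the normal graph function $\tilde h(\cdot,t)$ over $\pa F_\infty = \Gamma_d$ is equivalent, for small perturbations of a flat horizontal graph, to the same exponential $C^k$-estimate on $h(\cdot,t)-d$, via the smooth change of variables relating the normal-graph parametrization to the vertical-graph parametrization.

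The one place where a little care is needed is to confirm that the periodic-graph version of Theorem~\ref{thmstability} really does apply with the same constants. The potentially delicate points are: (i) the coercivity property \eqref{emmepiccolo0} of $\pa^2\mathcal J(G_d)$ on $\tilde H^1(\Gamma_d)$, which follows from strict stability via the standard Fredholm argument used in \cite{Bo}; (ii) the validity of Lemma~\ref{lemma:j2>0near} along the flow, which is purely local and survives the switch from a bounded $\Om$ to a periodic strip; and (iii) the elliptic regularity estimates of Theorem~\ref{linearestimate}, whose proof is entirely localized on coordinate patches covering $\Gamma_t$ and hence is insensitive to the periodic boundary conditions. Once these three points are acknowledged, the remainder of the argument is identical to that of Theorem~\ref{thmstability} and nothing more needs to be said.
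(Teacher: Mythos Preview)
Your proposal is correct and follows exactly the approach indicated in the paper: the theorem is not given a separate proof there but is presented as an immediate consequence of the periodic-graph version of Theorem~\ref{thmstability}, once one invokes \cite[Proposition~7.3]{Bo} for the strict stability of the flat configuration $h\equiv d$ for small $d$. Your write-up is in fact more detailed than the paper's own discussion, which simply asserts that the short-time existence and stability analyses carry over to the periodic setting without essential changes.
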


\section{Appendix: technical lemmas}
In this appendix we collect a few technical results and we give the proof of Lemma~\ref{lemmaJ} and of Proposition~\ref{magic formula}.
\begin{lemma}\label{lm:taldeitali}
Let $\Sigma$  be an $m$-dimensional  smooth compact manifold in $\R^n$ and let $k\geq1$. If $f$, $g\in H^k(\Sigma)\cap L^\infty(\Sigma)$, then $fg\in H^k(\Sigma)$ and 
$\|fg\|_{H^k(\Sigma)}\leq C\big(\|f\|_{H^k(\Sigma)}\|g\|_{L^\infty(\Sigma)}+\|g\|_{H^k(\Sigma)}\|f\|_{L^\infty(\Sigma)}\big)$. Moreover, if $A \in C^\infty(\R)$ then 
$A(f) \in H^k(\Sigma)$ and  $\|A(f)\|_{H^k(\Sigma)}\leq C (1+\|f\|_{H^k(\Sigma)})$ where the constant depends on $A$ and on $\|f\|_{L^\infty(\Sigma)}$. 

Finally, if $U\subset\R^m$ is an open set $\Phi:\overline U\to \Phi(\overline U)\subset\Sigma$ is a diffeomorphism   of class $H^k\cap C^{1}$, $k\geq 1$, and
$f\in H^k(\Phi(U))\cap C^1({\Phi(\overline U)})$, then $\|f\circ \Phi\|_{H^k(U)}\leq C(\|Df\|_\infty, \|D\Phi\|_\infty)(\|f\|_{H^k}+\| \Phi\|_{H^k})$.
\end{lemma}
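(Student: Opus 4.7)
The three assertions all follow from the Leibniz/Faà di Bruno formulas for covariant derivatives combined with the Gagliardo--Nirenberg interpolation of Lemma~\ref{aubinlemma}. The plan is to treat them in the order in which they build on one another.

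\emph{Product estimate.} First, I would expand, using the Leibniz rule for the covariant derivative on $(\Sigma,g)$, the $k$-th derivative
\[
\nabla^k(fg)=\sum_{j=0}^k\binom{k}{j}\,\nabla^j f\otimes\nabla^{k-j} g,
\]
so that $|\nabla^k(fg)|\leq C\sum_{j=0}^k|\nabla^j f||\nabla^{k-j} g|$. For $1\leq j\leq k-1$ I would estimate each mixed term via Hölder's inequality with exponents $2k/j$ and $2k/(k-j)$, and then invoke the interpolation inequality of Lemma~\ref{aubinlemma} (with $p=2k/j$, $r=2$, $q=\infty$, $m=k$, $\vartheta=j/k$) to obtain
\[
\|\nabla^j f\|_{L^{2k/j}(\Sigma)}\leq C\,\|f\|_{H^k(\Sigma)}^{j/k}\|f\|_{L^\infty(\Sigma)}^{1-j/k},
\]
and symmetrically for $g$. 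Combining with the pure endpoint terms $j=0,k$ and Young's inequality immediately gives the claimed bound.

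\emph{Composition with a smooth scalar function.} Here I would apply the Faà di Bruno formula, which schematically yields
\[
\nabla^k A(f)=\sum_{m=1}^k\sum_{j_1+\cdots+j_m=k,\,j_i\geq 1}c_{m,\mathbf{j}}\,A^{(m)}(f)\,\nabla^{j_1}f\otimes\cdots\otimes\nabla^{j_m}f.
\]
Since $\|f\|_{L^\infty}<\infty$, each $A^{(m)}(f)$ is uniformly bounded by a constant depending on $A$ and $\|f\|_{L^\infty}$. I would then bound $\|\nabla^{j_1}f\otimes\cdots\otimes\nabla^{j_m}f\|_{L^2}$ by Hölder with exponents $2k/j_i$ and apply the same interpolation as in the previous step. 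Because $\sum_i j_i/k=1$, the product of the interpolated norms is precisely $C\|f\|_{H^k}\|f\|_{L^\infty}^{m-1}$, giving $\|A(f)\|_{H^k}\leq C(1+\|f\|_{H^k})$.

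\emph{Pullback under a diffeomorphism.} Writing $f\circ\Phi$ in local coordinates on $U\subset\R^m$ and using Faà di Bruno once more, $D^k(f\circ\Phi)$ is a sum of terms of the form $(D^m f)(\Phi)\cdot D^{j_1}\Phi\otimes\cdots\otimes D^{j_m}\Phi$ with $j_1+\cdots+j_m=k$. Since $\Phi$ is a $C^1$-diffeomorphism we have pointwise control $\|Df\|_\infty$, $\|D\Phi\|_\infty<\infty$ as well as a uniform lower bound on the Jacobian of $\Phi$, which controls the change of variables. The factor involving the highest derivative ($m=k$, so $j_i=1$ for all $i$) contributes $\|Df\|_\infty^?\|D^k\Phi\|_{L^2}\lesssim \|\Phi\|_{H^k}$, while the term $m=1$, $j_1=k$ contributes (after changing variables) a multiple of $\|D^k f\|_{L^2(\Phi(U))}\lesssim \|f\|_{H^k}$. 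For the intermediate terms $2\leq m\leq k-1$ I would again use Hölder with exponents $2k/j_i$ combined with Lemma~\ref{aubinlemma} applied to both $f\circ\Phi$ and $\Phi$, estimating each mixed product by an interpolated quantity; Young's inequality allows the higher-order terms on the right-hand side to be absorbed into the desired bound. The main technical point here is the bookkeeping of exponents in Faà di Bruno and the need to invoke the $C^1$-bounds on both $f$ and $\Phi$ to compensate for the lack of $L^\infty$-control on derivatives of order $\geq 2$; once that is handled, the interpolation closes the estimate.
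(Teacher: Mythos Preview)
Your approach is correct in substance and is essentially the standard route to these Moser-type estimates; the paper itself does not give an argument but simply cites \cite[Propositions~3.7 and 3.9]{Taylor} for the first two assertions and remarks that the third follows from the first by induction on $k$. Your explicit Leibniz/Fa\`a di Bruno plus Gagliardo--Nirenberg scheme is exactly what underlies those cited results, so there is no real methodological difference for the first two parts. For the third part your direct Fa\`a di Bruno computation is a legitimate alternative to the paper's induction; the interpolation does close (the exponents on $\|f\|_{H^k}$ and $\|\Phi\|_{H^k}$ coming from each term sum to $1$, so Young's inequality turns the product into the desired sum).

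One small slip to fix: in your discussion of the extreme cases of Fa\`a di Bruno you have swapped $m=1$ and $m=k$. The term with $m=k$ (all $j_i=1$) is $(D^k f)(\Phi)\,(D\Phi)^{\otimes k}$, which after a change of variables is controlled by $\|D\Phi\|_\infty^k\|D^k f\|_{L^2(\Phi(U))}\lesssim \|f\|_{H^k}$; the term with $m=1$, $j_1=k$ is $(Df)(\Phi)\,D^k\Phi$, controlled by $\|Df\|_\infty\|D^k\Phi\|_{L^2}\lesssim \|\Phi\|_{H^k}$. Once this is corrected your sketch goes through.
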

\begin{proof}
The first two statements of the lemma are classical, see for instance \cite[Propositions~3.7~and~3.9]{Taylor}. The third one can be proven by an induction argument from the first one. 
\end{proof}

We now prove Lemma~\ref{lemmaJ}.
\begin{proof}[Proof of Lemma~\ref{lemmaJ}]
First, recall \eqref{remainder} and observe that from the assumption on $h_i$ we have  $\sup_{0\leq t\leq T}\|h_i(\cdot, t)\|_{C^{1,\alpha}(\Sigma)}\leq C\de^{\theta'}$ for a suitable $C>0$ and $\theta'\in (0,1)$. We begin by estimating for $\e>0$
\beq\label{lemmaJ1}
\begin{split}
&\int_0^T\int_{\Sigma} |\la A(x, h_2, \nabla h_2), \nabla^4 h_2 \ra - \la A(x, h_1, \nabla h_1), \nabla^4 h_1  \ra|^2\, d\Ha^2dt \\
&\leq 2\int_0^T\int_{\Sigma}|A(x, h_2, \nabla h_2)|^2 |\nabla^4 h_2 -\nabla^4h_1|^2\,d\Ha^2dt\\
&\quad+ 
2\int_0^T\int_{\Sigma}|\nabla^4 h_1|^2|A(x, h_2, \nabla h_2)-  A(x, h_1, \nabla h_1)|^2\, d\Ha^2dt \\
&\leq \e \int_0^T\int_{\Sigma} |\nabla^4 h_2 -\nabla^4h_1|^2\,d\Ha^2dt \\
&\quad+ C
\int_0^T\int_{\Sigma}|\nabla^4 h_1|^2(  |h_2-h_1|^2+ |\nabla h_2- \nabla h_1|^2)\, d\Ha^2dt. 
\end{split}
\eeq
To estimate the last term, we use the Sobolev inequality and the interpolation Lemma~\ref{aubinlemma}, and have 
\beq \label{lemmaJ2}
\begin{split}
& \int_0^T\int_{\Sigma}|\nabla^4 h_2|^2(  |h_2-h_1|^2+ |\nabla h_2- \nabla h_1|^2)\, d\Ha^2dt \\
&\leq C\int_0^T\|h_2(\cdot, t)-h_1(\cdot, t)\|_{W^{1,4}}^2\|\nabla^4 h_2(\cdot, t)\|^2_{L^4}\, dt\\
&\leq C\sup_{0\leq t\leq T}\|h_2(\cdot, t)-h_1(\cdot, t)\|^2_{H^2}\int_0^T\| h_2(\cdot, t)\|_{H^5}^{\frac53}\|\nabla h_2(\cdot, t)\|_{L^{\infty}}^{\frac13}\, dt\\
&\leq C\delta^{\frac{\theta'}3} \sup_{0\leq t\leq T}\|h_2(\cdot, t)-h_1(\cdot, t)\|^2_{H^2}T^{\frac16}\biggl(\int_0^T\|h_2(\cdot, t)\|_{H^5}^2\,dt\biggr)^{\frac56}\\
&\leq C(M_0)T^{\frac16}\sup_{0\leq t\leq T}\|h_2(\cdot, t)-h_1(\cdot, t)\|^2_{H^2}.
\end{split}
\eeq
Concerning the estimate of
$$
\int_0^T\int_\Sigma|J_1(x, h_2, \nabla h_2, \nabla^2 h_2, \nabla^3h_2) - J_1(x, h_1, \nabla h_1, \nabla^2 h_1, \nabla^3h_1)|^2\, d\Ha^2dt
$$
we observe that
\begin{align*}
& \int_0^T\int_\Sigma|\la B_1(x, h_2, \nabla h_2),  \nabla^3 h_2\otimes \nabla^2h_2\ra - \la B_1(x, h_1, \nabla h_1),  \nabla^3 h_1\otimes \nabla^2h_1\ra|^2\, d\Ha^2dt \\
&\leq C\int_0^T\int_\Sigma| B_1(x, h_2, \nabla h_2)-  B_1(x, h_1, \nabla h_1)|^2 |\nabla^3 h_2 \otimes \nabla^2h_2|^2\, d\Ha^2dt \\
&\quad +C\int_0^T\int_\Sigma| B_1(x, h_1, \nabla h_1)|^2  |\nabla^3 h_2 -\nabla^3 h_1 |^2|\nabla^2h_2|^2\, d\Ha^2dt\\
&\quad+C\int_0^T\int_\Sigma| B_1(x, h_1, \nabla h_1)|^2  |\nabla^2 h_2 -\nabla^2 h_1|^2|\nabla^3h_1|^2\, d\Ha^2dt\\
&\leq C\int_0^T\int_\Sigma (|h_2-h_1|^2 + |\nabla h_2 -\nabla h_1|^2)|\nabla^3h_2|^2|\nabla^2h_2|^2\, d\Ha^2dt\\
&\quad +C\int_0^T \int_\Sigma|\nabla^2 h_2 -\nabla^2 h_1|^2|\nabla^3h_1|^2\, d\Ha^2dt\\
&\quad+ C \int_0^T\int_\Sigma |\nabla^3 h_2 -\nabla^3 h_1|^2 |\nabla^2 h_2|^2\, d\Ha^2dt=:I_1+I_2+I_3.
\end{align*}
By a simple interpolation argument, we have
\begin{align*}
I_3 &\leq \int_0^T\|\nabla^3 h_2-\nabla^3h_1\|_{L^4}^2\|\nabla^2 h_2\|_{L^4}^2\, dt \leq CM_0
\int_0^T\|h_1-h_2\|_{H^4}^{\frac32}\|\nabla^2 h_2-\nabla^2h_1\|_{L^2}^{\frac12}\\
&\leq \e \int_0^T\|\nabla^4 h_2-\nabla^4 h_1\|_{L^2}^2\, dt+C_\e(M_0) T \sup_{0\leq t\leq T}\|h_2(\cdot, t)-h_1(\cdot, t)\|_{H^2}^2. 
\end{align*}
Similarly
\begin{align*}
I_2 &\leq \int_0^T\|\nabla^2 h_2-\nabla^2h_1\|_{L^4}^2\|\nabla^3 h_1\|_{L^4}^2\, dt \\ 
&\leq C\int_0^T\|h_2- h_1\|_{H^4}^{\frac12}\|h_2- h_1\|_{H^2}^{\frac32}\| h_1\|_{H^5}^{\frac12}
\|\nabla^3 h_1\|_{L^2}^{\frac32}\, dt\\
&\leq \e \int_0^T\|\nabla^4 h_2-\nabla^4 h_1\|_{L^2}^2\, dt+C_\e (M_0) \sup_{0\leq t\leq T}\|h_2(\cdot, t)-h_1(\cdot, t)\|_{H^2}^2 
 \int_{0}^T1+\| h_1\|_{H^5}^{\frac23}\, dt\\
&\leq \e \int_0^T\|\nabla^4 h_2-\nabla^4 h_1\|_{L^2}^2\, dt+C_\e (M_0)T^{\frac23} \sup_{0\leq t\leq T}\|h_2(\cdot, t)-h_1(\cdot, t)\|_{H^2}^2. 
\end{align*}
Finally, arguing similarly as above,
\begin{align*}
I_1 &\leq \int_0^T\| h_1-h_2\|_{W^{1,6}}^2\|\nabla^3 h_2\|_{L^6}^2\|\nabla^2h_2\|_{L^6}^2\, dt \\ 
&\leq C M_0 \sup_{0\leq t\leq T}\|h_2(\cdot, t)-h_1(\cdot, t)\|_{H^2}^2 \int_0^T \| h_2\|_{H^5}^{\frac23}\|\nabla^3 h_2\|_{L^2}^{\frac43} \, dt \\
&\leq C(M_0) T^{\frac23} \sup_{0\leq t\leq T}\|h_2(\cdot, t)-h_1(\cdot, t)\|_{H^2}^2\,.
\end{align*}
Since the difference of the remaining  terms in $J_1$ can be treated in a similar (in fact easier) way, we conclude that
\begin{align}
&\int_0^T\int_\Sigma|J_1(x, h_2, \nabla h_2, \nabla^2 h_2, \nabla^3h_2 - J_1(x, h_1, \nabla h_1, \nabla^2 h_1, \nabla^3h_1))|^2\, d\Ha^2dt\label{lemmaJ3}\\
&\leq \e \int_0^T\|\nabla^4h_2(\cdot, t)-\nabla^4h_1(\cdot, t)\|_{2}^{2}\, dt
+C_\e(M_0)T^\theta \sup_{0\leq t\leq T}\|h_2(\cdot, t)- h_1(\cdot, t)\|_{H^2}^2.\nonumber
\end{align}
We are left  to show that 
\begin{align}
&\int_0^T\int_\Sigma|J_2(x, h_2, \nabla h_2, \nabla^2 h_2, \nabla f, \nabla^2f - J_2(x, h_1, \nabla h_1, \nabla^2 h_1, \nabla f, \nabla^2f))|^2\, d\Ha^2dt\label{lemmaJ4}\\
&\leq \e \int_0^T\|\nabla^4h_1(\cdot, t)-\nabla^4h_2(\cdot, t)\|_{2}^{2}\, dt
+C_\e(M_0, K_0)T^\theta \sup_{0\leq t\leq T}\|h_2(\cdot, t)- h_1(\cdot, t)\|_{H^2}^2\nonumber \,.
\end{align}
As before we only prove the estimate for
$$
I_4:=\int_0^T\int_\Sigma|\la A_1(x, h_2, \nabla h_2)- A_1(x, h_1, \nabla h_1),\nabla^2 f\ra|^2\, d\Ha^2dt,
$$
the other terms being similar (or easier).  Using once again Lemma~\ref{aubinlemma} we have
\begin{align*}
I_4&\leq \int_0^T\|h_2-h_1\|_{W^{1,4}}^{2}\|\nabla^2 f\|^2_{L^4}\, dt\\
&\leq C \sup_{0\leq t\leq T}\|h_2(\cdot, t)- h_1(\cdot, t)\|_{H^2}^2\int_0^T\|\nabla^3 f\|^{\frac32}_{2}\|f\|_{L^\infty}^{\frac12}\, dt\\
&\leq C K_0^{\frac12}
\sup_{0\leq t\leq T}\|h_2(\cdot, t)- h_1(\cdot, t)\|_{H^2}^2\int_0^T\|\nabla^3 f\|^{\frac32}_{L^2}\, dt\\
&\leq C K_0^{\frac54} T^{\frac14}\sup_{0\leq t\leq T}\|h_2(\cdot, t)- h_1(\cdot, t)\|_{H^2}^2.
\end{align*}
The conclusion then follows by collecting \eqref{lemmaJ1}-\eqref{lemmaJ4}.
\end{proof}

Finally we give the proof of  Proposition \ref{magic formula}. 

\begin{proof}[Proof of Proposition \ref{magic formula}]
The proof is similar to the proof of \cite[Lemma 3.3]{surf2D}. For this reason we adopt the same notation as there and extend every function on $\pa F_t$ using the signed distance function $d_{F_t}$. In particular, the normal $\nu_t = \nu_{F_t}$, the second fundamental form $B_t = B_{F_t}$ and the mean curvature $H_t = H_{F_t}$ are extended to a tubular neighborhood of $\pa F_t$. Recall that $D_\tau $ denotes the tangential gradient  defined in \eqref{tang grad} and $\Div_\tau$ denotes the tangential divergence, which is defined as $\Div_\tau X = \Div X - (DX \nu_t ) \cdot \nu_t$. The Laplace-Beltrami operator on $F_t$ can be written as $\Delta v = \diver_\tau(D_\tau v)$, the second fundamental form as $B_t = D_\tau\nu_t$ and the mean curvature as  $H_{t} = \diver_\tau \nu_t$.

The regularity properties of $h$ stated in Theorem~\ref{thm surf} imply that for every integer $k\geq 1$  $\nabla^k h\in H^1_{loc}(0,T; L^2(\Sigma))$. Therefore, in what follows all the time derivatives are well defined almost everywhere. In turn, this allows us to differentiate $u_t:=u_{F_t}$ with respect to time.  More precisely, setting $\dot{u}_t:=\frac{\pa u_{t+s}}{\pa s}\big|_{s=0}$, we can argue as in  \cite[Theorem~4.1]{Bo} to conclude that $\dot{u}$ solves 
\begin{equation}\label{eq u dot}
\int_{\Om \setminus F_t} \C E(\dot u_t) : E(\vphi) \, dx =- \int_{\pa F_t}\text{div}_\tau (\Delta R_t\, \C E(u_t) ) \cdot \vphi \, d \Ha^2 
\end{equation}
for all $\varphi \in H^1(\Omega \setminus F_t; \R^3)$ such that $\varphi = 0$ on $\pa_D\Omega$. Note also that $\dot u_t=0$ on $\pa_D\Omega$.

 Let us fix time $t>0$. To continue we  observe that, by redefining the velocity field  X assosiated with the flow \eqref{FLOW}  if needed (in a time interval centered at $t$), we may assume that $X_t$ has only a normal component on $\pa F_t$; that is, 
 $$
 X_t=(X_t\cdot \nu_t)\nu_t = (\Delta R_t) \nu_t\qquad\text{on $\pa F_t$.}
$$ 
Since we extended all the geometric quantities by means of the gradient of the signed distance  from $F_t$  we have the following equality (see \cite{CMM})
\[
\dot{\nu}_t=-D_\tau(X_t\cdot\nu_t)=-D_\tau(\Delta R_t) \ \qquad\text{on $\pa F_t$,}. 
\]
 This implies (see the proof of \cite[eq.~(5.15)]{AFJM})
\beq \label{nupuntobis}
 \dot{H}_t :=  \frac{\partial }{\partial s} H_{t+s} \bigl|_{s=0}=  - \Delta^2 R_t \qquad \text{on} \,  \pa F_t.
\eeq
Moreover we have (see \cite{CMM}) 
\beq \label{nupuntobis---what the fuck that means---}
 \pa_{\nu_t} H_t = - |B_t|^2 \qquad \text{on} \,  \pa F_t.
\eeq
Denoting by $D_{\tau_{t+s}}$ the tangential gradient on $\pa F_{t+s}$ and by $J_\tau\Phi_s$ the  tangential Jacobian
of $\Phi_s$,   we have  
\beq\label{der1SD}
\begin{split}
&\frac{d}{ds} \left(\frac{1}{2}   \int_{\partial F_{t+s}} |D_\tau R_{t+s}|^2\, d \Ha^{2} \right)\Bigl|_{s=0} \\
&=   \frac{d}{ds} \left(\frac{1}{2}   \int_{\partial F_{t}} (|D_{\tau_{t+s}} R_{t+s}|^2\circ\Phi_s )\, J_\tau\Phi_s\, d \Ha^{2}\right) \Bigl|_{s=0} \\
&= \frac{1}{2}   \int_{\partial F_{t}} |D_\tau R_{t}|^2 \diver_\tau (\Delta  R_t\, \nu_t )\, d \Ha^{2} +   \int_{\partial F_{t}} D_\tau R_{t} \cdot \frac{\pa }{\pa s}  \left( D_{\tau_{t+s}} R_{t+s}\circ \Phi_s \right)\Bigl|_{s=0}\, d \Ha^{2}\\
&= \frac{1}{2}   \int_{\partial F_{t}} H_t |D_\tau  R_{t}|^2 \Delta R_t\, d \Ha^{2} +   \int_{\partial F_{t}} D_\tau R_{t} \cdot \frac{\pa }{\pa s}  \left( D_{\tau_{t+s}} R_{t+s}\circ \Phi_s \right)\Bigl|_{s=0}\, d \Ha^{2}
\end{split}
\eeq
We write the last term  as
\[
D_{\tau_{t+s}} R_{t+s}\circ \Phi_s   = \left[ I - \nu_{t+s}\circ \Phi_s  \otimes \nu_{t+s}\circ \Phi_s   \right] DR_{t+s}\circ \Phi_s 
\]
and get  (recall $\dot{\Phi} = X_t =  (\Delta R_t) \nu_t$)
\[
\begin{split}
\frac{\pa }{\pa s} \big( D_{\tau_{t+s}} R_{t+s}&\circ \Phi_s \big)\Bigl|_{s=0} =    \left[ I - \nu_t \otimes \nu_t  \right] (D \dot{R}_t + D^2R_t X_t )    + (-  \dot{\nu}_t \otimes \nu_t - \nu_t \otimes  \dot{\nu}_t) DR_t  \\
&= D_\tau \dot{R}_t + \Delta R_t \left(  (I - \nu_t \otimes \nu_t)D^2R_t\right)[\nu_t]  +(DR_t \cdot  \nu_t)\, D_\tau\Delta R_t   -    (DR_t \cdot \dot{\nu}_t) \nu_t.
\end{split}
\]
Note that $D_\tau  (DR_t \cdot  \nu_t) = B_t D_\tau R_t + \big((I - \nu_t \otimes \nu_t) D^2R_t\big)[\nu_t]$. Thus we have
\[
\begin{split}
 D_\tau R_{t} \cdot \frac{\pa }{\pa s}  \left( D_{\tau_{t+s}} R_{t+s}\circ \Phi_s \right)\Bigl|_{s=0}\!\! &=  (D_\tau R_{t} \cdot D_\tau \dot{R}_t)  - \Delta R_t  ( B_t [D_\tau R , D_\tau R_t])\\
&\quad + \Delta R_t \big(D_\tau R \cdot D_\tau (DR_t \cdot \nu_t) \big) + (D_\tau R_{t}  \cdot  D_\tau\Delta R_t) \,   (DR_t\cdot  \nu_t).
\end{split}
\]
Therefore by integrating by parts the first and the third terms we obtain
\[
\begin{split}
 &\int_{\partial F_{t}} D_\tau R_{t} \cdot \frac{\pa }{\pa s}  \left( D_{\tau_{t+s}} R_{t+s}\circ \Phi_s \right)\Bigl|_{s=0}\, d \Ha^{2}  \\
&= \int_{\partial F_{t}}   (D_\tau R_{t} \cdot D_\tau \dot{R}_t)  - \Delta R_t   \big(B_t [D_\tau R , D_\tau R_t]\big)\, d \Ha^{2}  \\
&\,\,\,\,\,\,\,\,+  \int_{\partial F_{t}}    \Delta R_t  \big( D_\tau R \cdot  D_\tau  (DR_t \cdot  \nu_t)\big)+  (D_\tau R_{t} \cdot  D_\tau\Delta R_t)  \, (DR_t \cdot  \nu_t)\, d \Ha^{2} \\
&=  \int_{\partial F_{t}}  -  \Delta R_{t} \,  \dot{R}_t - \Delta R_t   \big(B_t [D_\tau R , D_\tau R_t]\big) \,  d \Ha^{2}  \\
&\,\,\,\,\,\,\,\,+  \int_{\partial F_{t}}  -   (DR_t \cdot \nu_t)  \, \Div_\tau (\Delta R_t    D_\tau R_t)   + (D_\tau R_{t} \cdot  D_\tau\Delta R_t)  \, (DR_t \cdot  \nu_t) \, d \Ha^{2}\\
&=  \int_{\partial F_{t}} -  \Delta R_{t} \,  \dot{R}_t  -  (DR_t \cdot \nu_t)  \,  (\Delta R_t)^2 - \Delta R_t   \big(B_t [D_\tau R , D_\tau R_t]\big)\, d \Ha^{2} .
\end{split}
\]

Let us denote $u_t = u_{F_t}$ and $\dot{u}_t = \frac{\pa }{\pa t}u_t $.  By \eqref{nupuntobis}   it holds
\[
\dot{R}_t  = \dot{H}_t  + \frac{\pa}{\pa t}Q(E({u}_t )) = - \Delta^2 R_t + \C E(\dot{u}_t ):E(u_t)
\]
and by \eqref{nupuntobis---what the fuck that means---}  we have
\[
(DR_t, \nu_t) = \pa_{\nu_t} H_{t} + \pa_{\nu_t} Q(E(u_t))   = - |B_t|^2 + \pa_{\nu_t} Q(E(u_t)).
\]
Therefore we get
\[
\begin{split}
&\int_{\partial F_{t}} D_\tau R_{t} \cdot \frac{\pa }{\pa s}  \left( D_{\tau_{t+s}} R_{t+s}\circ \Phi_s \right)\Bigl|_{s=0}\, d \Ha^{2}=  \int_{\partial F_{t}}     \Delta R_{t} \,  \Delta^2 R_{t}  -   \C E(\dot{u}_t ):E(u_t)  \Delta R_{t}\, d\Ha^2 \\
&\quad\qquad\qquad\qquad\qquad    +\int_{\partial F_{t}}|B_t|^2 (\Delta R_t)^2-\pa_{\nu_t}Q(E(u_t))  \,  (\Delta R_t)^2  -\Delta R_t    \big(B_t [D_\tau R , D_\tau R_t]\big) \, d \Ha^{2}\,. 
\end{split}
\]
Observe now that using the second equation in \eqref{uf} and  \eqref{eq u dot} we have
\begin{multline*}
\int_{\partial F_{t}} \C E(\dot{u}_t ):E(u_t)  \Delta R_{t}\, d\Ha^2=
\int_{\partial F_{t}} \C E({u}_t ):D(\dot{u}_t)  \Delta R_{t}\, d\Ha^2 \\=
\int_{\partial F_{t}} \C E({u}_t ):D_\tau(\dot{u}_t)  \Delta R_{t}\, d\Ha^2
=-\int_{\partial F_{t}}\Div_\tau(\Delta R_t \C E(u_t))\cdot \dot{u}_t= 2\int_{\Om \setminus F_t} Q (E(\dot{u}_t)) \, dx.
\end{multline*}
Collecting the previous three identities we then get 
\begin{multline*}
\int_{\partial F_{t}} D_\tau R_{t} \cdot \frac{\pa }{\pa s}  \left( D_{\tau_{t+s}} R_{t+s}\circ \Phi_s \right)\Bigl|_{s=0}\, d \Ha^{2}=
- \int_{\partial F_{t}}     |\nabla \Delta_\tau R_{t}|^2  + 2 Q(E(\dot{u}_t))  \Delta R_{t}\, d\Ha^2\\
    +\int_{\partial F_{t}} |B|^2 (\Delta R_t)^2    - \pa_{\nu_t} Q(E(u_t)) \,  (\Delta R_t)^2  - B_{t}[\nabla R_t, \nabla  R_t] \,\Delta R_t  \, d \Ha^{2}.
\end{multline*}
We notice that the first four terms coincide with $- \pa^2 J(F_t)[\Delta R_t]$ (see \eqref{eq:pa2J}).   Thus, combining the last identity with \eqref{der1SD}, we obtain \eqref{magic}. 
\end{proof}


\begin{thebibliography}{50}


\bibitem{AFJM} \textsc{Acerbi E.;  Fusco N.; Julin V.; Morini M.}, 
\emph{Nonlinear stability results for  the modified Mullins-Sekerka  and the surface diffusion flow}.  J. Differential. Geom. 

\bibitem{AFM} \textsc{Acerbi E.;  Fusco N.;  Morini M.}, \emph{ Minimality via second variation for a nonlocal isoperimetric problem,} Comm. Math. Phys. \textbf{322} (2013), 515--557. 

\bibitem{angenent-gurtin89} \textsc{Angenent S.}; \textsc{Gurtin M.E.},
\emph{Multiphase thermomechanics with interfacial structure. {II}.\
{E}volution of an isothermal interface.} Arch. Rational Mech. Anal. \textbf{108} (1989), 323--391.

%
\bibitem{AubinBook1}
\textsc{Aubin T.},
 \emph{A course in differential geometry.} Graduate Studies in Mathematics {\bf 27}. American Mathematical Society, Providence RI 2001.

\bibitem{AubinBook2}
\textsc{Aubin T.},
 \emph{Some nonlinear problems in Riemannian geometry.} Springer Monographs in Mathematics. Springer-Verlag, Berlin, 1998.
 
  \bibitem{BGZ15} \textsc{Bella P.; Goldman M.; Zwicknagl B.}, \emph{Study of island formation in epitaxially strained films on unbounded domains.} Arch. Ration. Mech. Anal. \textbf{218} (2015), 163--217.

 \bibitem{BMN} \textsc{Bellettini G.;  Mantegazza C.; Novaga M.}, \emph{Singular perturbations of mean curvature flow.} J. Differential Geom. \textbf{75} (2007), 403--431.
 
 \bibitem{Bo0} \textsc{Bonacini M.}, \emph{Epitaxially strained elastic films: the case of anisotropic
 surface energies.} ESAIM Control Optim. Calc. Var. \textbf{19} (2013),  167--189. 

\bibitem{Bo}
\textsc{Bonacini M.}, \emph{Stability of equilibrium configurations for elastic films 
in two and three dimensions.} Adv. Calc. Var. \textbf{8} (2015), 117--153.

\bibitem{BC}\textsc{Bonnetier E.}; \textsc{Chambolle A.}, \emph{Computing
the equilibrium configuration of epitaxially strained crystalline films.} SIAM
J. Appl. Math. \textbf{62} (2002), 1093--1121.

\bibitem{BCS} \textsc{Braides A.; Chambolle A.;  Solci M.}, \emph{A relaxation result for energies defined on pairs
 set-function and applications.} ESAIM Control Optim. Calc. Var. \textbf{13} (2007), 717--734.


\bibitem{BHSV} \textsc{Burger M.}; \textsc{Hausser H.}; \textsc{St\"ocker C.}; \textsc{Voigt A.}, 
\emph{A level set approach to anisotropic flows with curvature regularization.} J. Comput. Phys. \textbf{225} (2007), 183--205.



\bibitem{CMM} \textsc{Cagnetti F.}; \textsc{Mora M.G.}; \textsc{Morini M.}, 
\emph{ A second order minimality condition for the Mumford-Shah functional.}
Calc. Var. Partial Differential Equations  \textbf{33}  (2008),  37--74.

\bibitem{cahn-taylor94} \textsc{Cahn J. W.}; \textsc{Taylor J. E.}, 
\emph{Overview N0-113 - Surface motion by surface-diffusion}. 
Acta Metallurgica et Materialia,  \textbf{42} (1994), 1045--1063.


\bibitem{CJP}\textsc{Capriani G.M.; Julin V.; Pisante G.}, \emph{A quantitative second order minimality criterion for cavities in elastic bodies.} SIAM J. Math. Anal. \textbf{45} (2013), 1952--1991.

\bibitem{CS07} \textsc{Chambolle A.; Solci M.}, \emph{Interaction of a bulk and a surface energy with a geometrical constraint.} SIAM J. Math. Anal. \textbf{39} (2007),  77--102.


\bibitem{DP} \textsc{Davoli E.;  Piovano P.},  \emph{Analytical validation of the Young-Dupr\'e law for epitaxially-strained thin films.} Preprint 2017.

\bibitem{dicarlo-gurtin-guidugli92} \textsc{Di Carlo A.}; \textsc{Gurtin  M. E.}; \textsc{Podio-Guidugli P.},
\emph{A regularized equation for anisotropic motion-by-curvature.}
SIAM J. Appl. Math. \textbf{52} (1992), 1111--1119.


\bibitem{EMS} \textsc{Escher J.}; \textsc{Mayer U. F.}; \textsc{Simonett G.}, \emph{The surface diffusion flow for immersed hypersurfaces.} SIAM J. Math. Anal.  \textbf{29} (1998), 1419--1433.

 \bibitem{FFLMi}\textsc{Fonseca I.; Fusco N.; Leoni G.; Millot V.},
 \emph{Material voids in elastic solids with anisotropic surface energies.} J. Math. Pures Appl.  \textbf{96} (2011),  591--639.


\bibitem {FFLM}\textsc{Fonseca I.; Fusco N.; Leoni G.; Morini M.}, \emph{Equilibrium configurations of epitaxially strained crystalline films: existence and regularity results.} Arch. Ration. Mech. Anal. \textbf{186} (2007), 477--537.


\bibitem{FFLM2}\textsc{Fonseca I.; Fusco N.; Leoni G.; Morini M.},
 \emph{Motion of elastic thin films by anisotropic surface diffusion with curvature regularization.} Arch. Ration. Mech. Anal. \textbf{205} (2012),  425--466.
 
 \bibitem{FFLM3}\textsc{Fonseca I.; Fusco N.; Leoni G.; Morini M.},
 \emph{Motion of elastic three-dimensional elastic films by anisotropic surface diffusion with curvature regularization.} Anal. PDE \textbf{8} (2015),  373--423.

 
\bibitem{surf2D}
\textsc{Fusco N., Julin V., Morini M.},
\emph{The surface diffusion flow with elasticity in the plane.} Comm.  Math. Phys., \textbf{362} (2018), 571-607.

 \bibitem{FM09}\textsc{Fusco N.}; \textsc{Morini M.}, \emph{Equilibrium configurations of epitaxially strained elastic films: second order minimality conditions and qualitative properties of solutions.} Arch. Rational Mech. Anal. \textbf{203} (2012), 247--327.

\bibitem{GN} \textsc{Gao H.; Nix  W.D.}, \emph{Surface roughening of heteroepitaxial thin films.} Ann. Rev. Mater. Sci. \textbf{29} (1999), 173--209.

\bibitem{GigaIto} \textsc{Giga Y; Ito K.}, \emph{On pinching of curves moved by surface diffusion.} Commun. Appl. Anal. \textbf{2} (1998), 393--405.

 \bibitem{GZ14} \textsc{Goldman M.; Zwicknagl B.}, \emph{Scaling law and reduced models for epitaxially strained crystalline films.} SIAM J. Math. Anal. \textbf{46} (2014), 1--24.


\bibitem{GurJab} \textsc{Gurtin M. E.}; \textsc{Jabbour M. E.}, \emph{Interface evolution in three dimensions with curvature-dependent energy and surface diffusion: interface-controlled
 evolution, phase transitions, epitaxial growth of elastic films.}
Arch. Ration. Mech. Anal. \textbf{163} (2002), 171--208. 

\bibitem{GuVo} \textsc{Gurtin M.;  Voorhees P.}, \emph{The continuum mechanics of coherent two-phase 
elastic solids with mass transport.} Proc. R. Soc. Lond. A \textbf{440} (1993), 323--343.

\bibitem{herring51}\textsc{Herring C.}, \emph{Some theorems on the free energies of crystal surfaces.}, Physical Review  \textbf{82} (1951), 87--93.

\bibitem{KL} \textsc{Koch H.; Lamm T.}, \emph{Geometric flows with rough initial data.} Asian J. Math. \textbf{16} (2012),  209--235. 

\bibitem {KLM}\textsc{Koch H.}; \textsc{Leoni G.}; \textsc{Morini M.},
\emph{On Optimal regularity of Free Boundary Problems and a Conjecture of De
Giorgi.} Comm. Pure Applied Math. \textbf{58} (2005), 1051--1076.


\bibitem{Lee}
\textsc{Lee J. M.}, 
\emph{Riemannian manifolds. An introduction to curvature.} Graduate Texts in Mathematics, \textbf{176}. Springer-Verlag, New York, 1997.

\bibitem{MantegazzaGAFA} \textsc{Mantegazza C.}, \emph{Smooth geometric evolutions of hypersurfaces.} Geom. Funct. 
Anal. \textbf{12} (2002), 138?182.

\bibitem{MantegazzaBook}
\textsc{Mantegazza C.}, \emph{Lecture notes on Mean Curvature Flow.} Progress in Mathematics {\bf 290}. Birkh\"auser/ Springer, Basel 2011.

\bibitem{Morini} 
\textsc{Morini M.}, 
\emph{Local and global minimality results for an isoperimetric problem with long-range interactions}, in `Free Discontinuity Problems'  153--224, CRM series \textbf{19}, Ed. Norm., Pisa, (2017).

\bibitem{Mu63} \textsc{Mullins W. W.}, \emph{Solid surface morphologies governed by capillarity.} In: Metal Surfaces. American society for metals, 1963.

 \bibitem{piove}\textsc{Piovano P.},  \emph{Evolution of elastic thin films with curvature regularization via minimizing movements.} Calc. Var. Partial Differential Equations {\bf 49} (2014), 337--367. 

\bibitem{RRV} \textsc{R\"atz A.}; \textsc{Ribalta A.}; \textsc{Voigt A.}, \emph{Surface evolution of elastically stressed films under deposition by a diffuse interface model.}  J. Comp. Phys. \textbf{214} (2006), 187--208.

\bibitem{SMV}\textsc{Siegel M.};  \textsc{Miksis M. J.}; \textsc{Voorhees P. W.}, 
\emph{ Evolution of material voids for highly anisotropic surface energy.} J. Mech. Phys. Solids \textbf{52} (2004), 1319--1353.

\bibitem{SpMe} \textsc{Spencer B. J.}; \textsc{Meiron D.I.}, \emph{Nonlinear evolution of stress-driven morphological instability in a two-dimensional semi-infinite solid.}
 Acta Metall. Mater. \textbf{42} (1994), 3629--3641.

\bibitem{Taylor}
\textsc{ Taylor M. E.} \emph{Partial Differential Equations III.}  Applied Mathematical Sciences {\bf 117}, Springer, Berlin 2011.









 
 









 
 




\end{thebibliography}
\end{document}